\documentclass{article}
\usepackage{graphicx}
%\graphicspath{ {./images} }
\usepackage{arxiv}

\usepackage[utf8]{inputenc} % allow utf-8 input
\usepackage[T1]{fontenc}    % use 8-bit T1 fonts
\usepackage{url}            % simple URL typesetting
\usepackage{booktabs}       % professional-quality tables
\usepackage{amsfonts}       % blackboard math symbols
\usepackage{nicefrac}       % compact symbols for 1/2, etc.
\usepackage[dvipsnames]{xcolor}
\usepackage{physics}
\usepackage{graphicx} % Required for inserting images
\graphicspath{ {./images} }
\usepackage{amsmath}
\usepackage{amssymb}
\usepackage{amsthm}
\usepackage{mathtools}
\usepackage{float}
\usepackage{todonotes}
\usepackage{array}
\usepackage{enumerate}
\usepackage{multicol}
\usepackage{multirow}
\usepackage{mathrsfs}
\usepackage{siunitx}
\usepackage{tabularx}
\usepackage{subcaption}
\usepackage[bookmarks, hidelinks]{hyperref}

\usepackage{bbm}
\usepackage{authblk}

\usepackage{subcaption}
\usepackage{hyperref}
\usepackage{booktabs}
\usepackage{cleveref}

\usepackage{adjustbox}
\usepackage{amsopn}

%\addbibresource{papers.bib}

\theoremstyle{definition}

\newtheorem{defn}{Definition}

\newtheorem{theorem}{Theorem}[section]

\newtheorem{corollary}[theorem]{Corollary}
\newtheorem{lemma}[theorem]{Lemma}

\newtheorem{remark}{Remark}

\newcommand{\R}{\mathbb{R}}

\newcommand{\inp}[2]{\left\langle #1,\, #2 \right\rangle}

% \newtheorem{example}{Example}

% %
\renewcommand{\vec}[1]{\mathbf{#1}}

\DeclareMathOperator{\diag}{diag}

\newcommand{\lJump}{[\![}
\newcommand{\rJump}{]\!]}

\newcommand{\nocrash}[1]{\textcolor{Blue}{#1}}
\newcommand{\crash}[1]{\textcolor{Red}{#1}}

\newcommand{\tablescaling}{0.9}
\newcommand{\figurescaling}{0.99}

\renewcommand{\vec}[1]{\mathbf{#1}}

\newcommand{\fn}[2]{\mathinner{#1\mathopen{\left(#2\right)}}}

\title{A dual-pairing summation-by-parts finite difference framework for nonlinear conservation laws}

\author[1]{Dougal Stewart}
\author[2]{Nathan Lee}
\author[3,4]{Kenneth Duru}

\affil[1]{University of Melbourne, Australia}
\affil[2]{University of New South Wales, Australia}
\affil[3]{Department of Mathematical Sciences , University of Texas at El Paso, USA}
\affil[4]{Mathematical Sciences Institute, Australian National University, Canberra, Australia}

\date{}

\begin{document}

\maketitle

 \begin{abstract}
    Robust and convergent high-order numerical methods for solving partial differential equations are highly attractive due to their efficiency on modern and next-generation hardware architectures. However, designing such methods for nonlinear hyperbolic conservation laws remains a significant challenge. 
In this work, we introduce a framework based on dual-pairing (DP) and upwind summation-by-parts (SBP) finite difference (FD) and discontinuous Galerkin (DG) finite element methods, aimed at achieving accurate and robust numerical approximations of nonlinear conservation laws. The framework ensures entropy consistency and features an intrinsic high-order accurate "filter" designed to detect and resolve regions where the solution is poorly captured or discontinuities are present.
The DP SBP FD/DG operators form a dual pair of discrete derivative operators that collectively preserve the SBP property. Furthermore, these operators are constructed to be upwind, allowing them to incorporate dissipation within the elements themselves. This contrasts with traditional SBP and collocated DG spectral element methods, which typically induce dissipation solely through numerical fluxes at element interfaces.
Our framework facilitates the systematic combination of DP SBP FD/DG operators with skew-symmetric and upwind flux splitting techniques. This integration enables the development of robust, high-order accurate schemes for nonlinear hyperbolic conservation laws. The resulting semi-discrete formulation is provably entropy-stable for arbitrary nonlinear problems.
We illustrate the effectiveness of our approach with specific examples, including inviscid Burgers’ equation, nonlinear shallow water equations, and the compressible Euler equations of gas dynamics. Extensive numerical experiments are presented to verify the accuracy and demonstrate the robustness of our framework.
 \end{abstract}

%\tableofcontents

\section{Introduction}
Accurate and efficient numerical simulations of nonlinear conservation laws are essential for advancements in science and industry. They facilitate the development of new technologies, deepen scientific understanding, and enable new discoveries. The endeavor to derive fast, precise, and reliable numerical methods for approximating solutions to nonlinear conservation laws has been a central focus of research since the pioneering works of von Neumann, Richtmyer, Lax, Wendroff, Godunov, and Harten~\cite{VonNeumannandRichtmyer1950,PDLax1954,Lax-Wendroff-1960,godunov1959,HARTEN1983151}. This area continues to attract significant interest from mathematicians and computational scientists, as evidenced by recent studies such as ~\cite{Sjögreen2019,GERRITSEN1996245,FISHER2013353,STIERNSTROM2021110100,Nordström2006,Mattsson_etal2004,MattssonandRider2015,Carpenter_et_al2014,fluxo_SplitForm,gassner2016skewsym_swe,CHAN2018346,GassnerWinters2021,Ranocha2018,giraldo2002nodal,ChanRanochaHendrik_et_al2022,CHEN2017427,LiuLuShu2024}.

Historically, the numerical solution of nonlinear conservation laws was dominated by low-order (first- and second-order) finite volume and finite difference schemes~\cite{godunov1959,LeVeque1992conslaws,LeVeque_2002,Lax-Wendroff-1960}, which rely on Godunov-type approximate Riemann solvers. While low-order methods are robust, their excessive numerical dissipation and dispersion errors limit their capacity to accurately resolve complex fine scale features such as shocks, turbulence, vortices, and wave phenomena—including gravitational and Rossby waves. Conversely, high-order methods—pioneered by Kreiss and Oliger~\cite{Kreiss-1972}—exhibit substantially reduced dissipation and dispersion errors, rendering them highly effective for wave-dominated problems. However, their low dissipation can compromise robustness, making high-order schemes more susceptible to numerical noise and instabilities arising from unresolved nonlinear modes. As a result, not all high-order methods are suitable for solving nonlinear conservation laws.

Many prior studies have promoted high-order methods with provable linear stability, relying on the smoothness of solutions to simulate nonlinear problems~\cite{H-O-Kreiss1970,kreiss2004initial,Gustafsson-Sundstrom1978,Oliger-Sundstrom1978}. However, linear stability alone often lacks robustness for nonlinear problems and fails to produce reliable results in many scenarios. For instance, phenomena such as turbulence and shocks are highly nonlinear, and linearly stable methods are insufficient to capture their complexities accurately. Numerous numerical results in the literature~\cite{Tadmor2003entropystable,gassner2022stability,FISHER2013353,ChanRanochaHendrik_et_al2022} have demonstrated that nonlinear stability—particularly entropy stability—is essential for ensuring the robustness and high-order accuracy of numerical methods applied to nonlinear conservation laws.

The primary goal of this study is to develop a framework for constructing provably high-order accurate, entropy-stable numerical schemes for nonlinear conservation laws, utilizing dual-pairing (DP) summation-by-parts (SBP) operators for both finite difference (FD) and discontinuous Galerkin (DG) formulations~\cite{DovgilovichSofronov2015,MATTSSON2017upwindsbp,williams2024drp,GLAUBITZ2025113841}. To our knowledge, such an approach has not been reported previously in the literature. Notably, recent applications of DP-SBP operators to nonlinear conservation laws~\cite{ranocha2023highorder,LUNDGREN2020109784,STIERNSTROM2021110100,GLAUBITZ2025113841} have combined these operators with classical finite volume flux splitting~\cite{LeVeque1992conslaws,godunov1959,LeVeque_2002} to ensure provable linear stability. For example, \cite{STIERNSTROM2021110100} focuses on scalar conservation laws, while \cite{ranocha2023highorder,GLAUBITZ2025113841} extend the methodology to systems and DG methods, emphasizing robustness. As demonstrated later in this paper, our framework facilitates the design of robust, high-order methods based on DP FD/DG SBP operators for both scalar and systems of nonlinear conservation laws, including applications to turbulent flows with shocks.

%%%
% 

% 
A nonlinearly/entropy stable numerical method bounds or conserves a convex entropy functional, which is conserved by the PDE system for smooth solutions~\cite{Tadmor2003entropystable,chan2022entropy,nordstrom2022linear,Sjögreen2019,LeVeque_2002,ricardo2023conservation,hew2024stronglystabledualpairingsummation,gassner2022stability,DAFERMOS1973202,HARTEN1983151,CHEN2017427,LiuLuShu2024}. Entropy stability also has physical significance: for example, solutions of the compressible Euler equations should satisfy the second law of thermodynamics, meaning total entropy should not decrease in a closed system. This leads to an entropy inequality—a partial differential inequality that a physically correct weak solution must satisfy for all convex entropy functions~\cite{Tadmor2003entropystable,LeVeque1992conslaws,LeVeque_2002,PDLax1973}.
However, high-order numerical schemes cannot satisfy the entropy inequality  for all convex entropy functions. Nevertheless, we can identify a specific entropy functional on which a numerical scheme can be based~ \cite{Carpenter_et_al2014,fluxo_SplitForm,gassner2016skewsym_swe,CHAN2018346,GassnerWinters2021,Ranocha2018,giraldo2002nodal,ChenShu2020}.

The SBP principle~\cite{kreiss1974finite,BStrand1994,fernandez2014review,svard2014review,gassner2013skew,taylor2010compatible} is crucial for developing provably stable numerical methods for PDEs, thanks to its mimetic structure that ensures stability at the semi-discrete level when boundary conditions are carefully treated. At the continuous level, stability proofs rely on integration by parts, and sometimes on the chain and product rules. Although SBP operators mimic integration by parts discretely, they struggle to replicate the chain and product rules. To align discrete analysis with continuous theory, nonlinear conservation laws are often reformulated into skew-symmetric or entropy-conserving split-forms, avoiding the direct use of the chain and product rules. Several methods exist for this reformulation~\cite{Sjögreen2019,GERRITSEN1996245,FISHER2013353,STIERNSTROM2021110100,ricardo2024entropy,ricardo2023entropy,ChanRanochaHendrik_et_al2022,gassner2016skewsym_swe,fluxo_SplitForm,Nordstrom_2022_skewsym_comp}. However, identifying suitable entropy functions and skew-symmetric forms can be complex for many PDE systems.

Once an entropy functional and a compatible skew-symmetric reformulation are established, SBP derivative operators can produce entropy-conserving schemes. Although these schemes are theoretically nonlinearly stable, traditional SBP operators—based on central FD~\cite{kreiss1974finite,BStrand1994,fernandez2014review,svard2014review}—or collocated DG spectral element method (DGSEM)~\cite{gassner2013skew,taylor2010compatible,fluxo_SplitForm}—may admit spurious wave modes that compromise accuracy or cause simulations to crash. Recent work~\cite{gassner2022stability} shows that local linear energy stability is essential for convergence of numerical methods for nonlinear conservation laws. Unfortunately, current high-order entropy-stable SBP and DGSEM schemes in skew-symmetric split-form lack this local energy stability, failing to meet this critical criterion.

This paper introduces a multi-block DP upwind SBP FD/DG framework for accurate, robust, and efficient numerical solutions of nonlinear conservation laws. Building on recent advances \cite{DovgilovichSofronov2015,MATTSSON2017upwindsbp,williams2024drp}, the DP SBP framework was developed to enhance FD methods for waves and nonlinear problems in complex geometries. It was later extended to DG operators \cite{GLAUBITZ2025113841}. The dual pair of high-order DP-SBP discrete derivative operators preserves the SBP principle but requires careful combination—along with skew-symmetric and upwind flux splitting—to ensure entropy stability and preserve system's symmetry.
Our key contribution is the systematic integration of DP SBP operators with flux splitting, enabling the design of stable, high-order schemes for nonlinear hyperbolic conservation laws. The semi-discrete scheme is conservative, entropy consistent (reproduces correct entropy loss at shocks) and provably entropy-stable, a novel feature not previously reported.  Extensive numerical tests confirm the method’s accuracy, stability, entropy consistency, and conservation.

The paper is organized as follows: \cref{sec:contanalysis} reviews continuous analysis and flux splitting; \cref{sec:SBP framework} introduces the SBP FD framework, including assumptions underlying traditional and DP upwind SBP methods; \cref{sec:discrete_methods} derives semi-discrete schemes and proves their conservative and stability properties; \cref{sec:numerical-experiments} presents numerical results verifying accuracy, conservation, and robustness; \cref{sec:conclusion} summarizes findings and discusses future directions.

\section{Continuous Analysis}\label{sec:contanalysis}
In this section we present and analyze the model problems. We perform entropy stability analysis and discuss the skew-symmetric reformulations of the model problems.
\subsection{Conservation Laws}
We are concerned with the numerical approximation of solutions $\vb{u}(\vb x,t) \in \R^{m} $ to PDEs of the form
\begin{equation} \label{eq:conservation-law}
    \partial_t \vb{u}(\vb x,t) + \div \vb f(\vb{u}(\vb x, t)) = 0,
    \quad \vb x \in \Omega \subset \R^d,
    \quad t \in [0, T],
\end{equation}
where $\vb{u}: \Omega \times [0,T] \to \R^m$ with $m\ge 1$  is the unknown solution,  $ \vb f : \R^m \to \R^{m\times d}$ is the flux function of the PDE, $d\ge 1$ denotes the spatial dimension, $\vb x = (x_1, x_2, \ldots, x_d) \in  \R^{d}$ is the spatial variable and $t \in [0, T]$ denotes time and $T>0$ is the final time. We set the smooth initial condition $\vec{u}(\vec{x},0) = \vec{u}_0(\vec{x})\in C^{1}(\Omega)$.
Such PDEs are known as conservation laws and often appear in science and engineering where $\vb{u}$ is a set of conserved variables which may denote some physical quantity such as velocity, momentum, mass, pressure, density, energy, magnetic/electric field, etc. Conservation laws are aptly named because they state that the total quantity of $\vb{u}$ is conserved. This can be understood by integrating  \eqref{eq:conservation-law} and then applying the divergence theorem,
\begin{equation} \label{eq:conservation}
    \partial_t \vb{\mathcal{U}}(t) =
        -\oint_{\partial\Omega} \vb f(\vb{u}(\vb x,t)) \cdot \vb n \dd{\vb x}, \quad \vb{\mathcal{U}}(t) =\int_\Omega \vb{u}(\vb x, t).
\end{equation}
That is, a conservation law essentially states that the change in the total quantity $\vb{\mathcal{U}}(t)$ in any given region of space $\Omega$ is governed by the flux $\vb f(\vb{u})\cdot \vb n$ going in and out of the boundary $\partial\Omega$, where $\vb n$ is the outward unit normal on the boundary. In particular, for continuous periodic solutions in $\Omega$ the total quantity $\vb{\mathcal{U}}(t)$ is conserved, that is $ \partial_t \vb{\mathcal{U}}(t)=0$ for all $t\ge0$. For nonlinear hyperbolic conservation laws with weak solutions, the conservative properties of a numerical method are critical for the convergence of numerical solutions \cite{Lax-Wendroff-1960}.

% By using the chain rule and the product rule we can rewrite the nonlinear conservation \eqref{eq:conservation-law} in the quasilinear form 
% \begin{align} \label{eq:conservation-law-quasilinear}
%     \partial_t \vb{u} + \sum_{\nu=1}^{m}A_{x_\nu}\frac{\partial\vb{u}}{\partial x_\nu} = 0,
%     \quad A_{x_\nu} \in \R^{m\times m},
%     \quad t \in [0, T],
% \end{align}
% where the square matrices
% $
% A_{x_\nu}(\vb{u}) = \partial_{\vb{u}}{\vb f}_\nu
% $,
% for $\nu = 1, 2, \cdots, d$, are the Jacobian of the flux. We are particularly interested in nonlinear strongly hyperbolic conservation laws. This is equivalent to the matrix
% $
% A = \sum_{\nu=1}^{d}k_\nu A_{x_\nu}$, with $\sum_{\nu=1}^{d}k_\nu^2 = 1,
% $
% being diagonalisable with real eigenvalues~\cite{gustafsson1995time} for all admissible state $\vb{u}\in \mathbb{R}^{m}$.

In this work, we will focus on initial value problems (IVP) driven by initial conditions or internal forcing for systems of nonlinear hyperbolic conservation laws. %$m=1,2,3,4$ and $d=1,2$. 
We will consider one and two space dimensions and assume periodic boundaries such that the boundary integral in the right-hand side of~\eqref{eq:conservation} vanishes.
However, our framework can be extended to an arbitrary number of space dimensions. Well-posed non-periodic boundary conditions can be implemented, for example, using  penalty methods or numerical fluxes~\cite{carpenter1994time,Gustafsson2008,Nordström2006,hew2024stronglystabledualpairingsummation}. That said, it is important to note that the numerical treatment of boundary conditions is problem specific and comes with additional theoretical and practical difficulties, in particular for nonlinear conservation laws. Therefore, the formulation of well-posed boundary conditions for nonlinear conservation laws and their numerical treatments are beyond the scope of the present study.
%. 

\subsection{Entropy stability} \label{sec:entropy-stability}
Hyperbolic  conservation laws often develop discontinuities, like shocks or contact discontinuities, even from smooth initial data, due to a loss of derivatives. These discontinuities make classical derivatives undefined, so solutions are understood in a weak sense \cite{PDLax1973,PDLax1954,LeVeque1992conslaws}. However, multiple weak solutions can exist, requiring additional criteria for uniqueness. 
One such criterion involves physical principles. For instance, solutions to the compressible Euler equations must satisfy the second law of thermodynamics, meaning the total entropy should not decrease in a closed system. This leads to the entropy inequality—a PDE constraint ensuring physically meaningful solutions. A physically correct  weak solution must satisfy this entropy condition for all convex entropy functions \cite{Tadmor2003entropystable,LeVeque1992conslaws,LeVeque_2002,PDLax1973}.

We introduce the $L^2$ inner product and norm
%\begin{align}
    $(\vb{u}, \vb{v}) = \int_{\Omega} \vb{u}^\top\vb{v} \dd \vb x$,  $\|\vb{u}\|^2 =  (\vb{u}, \vb{u})$.
%\end{align}
We define an entropy pair $e,\vb q$, with $e: \mathbb{R}^m \to \mathbb{R}$, ${\vb q}: \mathbb{R}^m \to \mathbb{R}^d$, where $e$ is called the entropy function, $\vb{q}$ satisfying $\partial_{\vb u} \vb q(\vb u) = \vb{g}^\top \partial_{\vb u} \vb f(\vb u)$  is called the entropy flux, and $\vb{g}: \mathbb{R}^m \to \mathbb{R}^m$ with $\vb{g}=\partial_{\vb u} e(\vb u)$ are the entropy variables. In addition, we may require that the entropy $e$ is convex, that is the 
Hessian $\mathrm{H}_{\vb u}(e(\vb u)) \in \mathbb{R}^{m \times m}$ is positive semi-definite.
%for all admissible state $\vb{u}$. 

When $\vb u$ is smooth, that is $\vb{u} \in C^1\left(\Omega \times [0,T]\right)$, we multiply \eqref{eq:conservation-law} by $\vb{g}^\top$ and apply the chain rule, we have
\begin{align*}
    \vb{g}^\top \partial_t \vb u + \vb{g}^\top \nabla \cdot \vb f(\vb u) &= 0\iff
    \vb{g}^\top \partial_t \vb u + \vb{g}^\top\partial_{\vb u} \vb f(\vb u)\cdot\partial_{\vb x} \vb u = 0.
\end{align*}
%%%
Note that $\partial_{\vb u} \vb f(\vb u): \mathbb{R}^{m\times d} \to \mathbb{R}^{m\times d \times m}$ and $\partial_{\vb x} \vb u: \mathbb{R}^{m} \to \mathbb{R}^{m\times d}$ are tensors.
Using the fact $\partial_{\vb u} \vb q(\vb u) = \vb{g}^\top \partial_{\vb u} \vb f(\vb u)$ and reversing the chain rule in time and space giving 
\begin{align}\label{eq:entropy_law}
    \partial_t e + \nabla \cdot \vb q &=0.
\end{align}
%%%
Indeed, the entropy satisfies a conservation law, and by integrating \eqref{eq:entropy_law} in time, the total entropy is conserved
\begin{align}\label{eq:total_entropy}
\dv{}{t} E(t)  = -\oint_{\partial\Omega} (\vb q \cdot \vb n) \dd \vb x =0, \quad E(t) := \left({1}, e\right).
\end{align} 
%%%
% 
%
When $\vb u$ is non-smooth, that is $\vb{u} \notin C^1\left(\Omega \times [0,T]\right)$, we may impose the entropy inequality which states that the entropy solution $\vb u$ must weakly satisfy the inequality
\cite{Tadmor2003entropystable,LeVeque_2002}
\begin{align} \label{eq:entropy-inequality}
    \partial_t e + \nabla \cdot \vb{q} &\leq 0 \implies \dv{}{t} E(t) = \dv{}{t}\int_\Omega e(\vb u(\vb x, t)) \dd \vb x \le -\oint_{\partial\Omega} (\vb q \cdot \vb n) \dd \vb x =0,
\end{align}
%%%
for all entropy pairs $e,\vb{q}$. Clearly, the local entropy inequality implies that the total entropy at any future time $t \in [0,T]$ is bounded by the initial total entropy, that is $E(t) \le E(0)$.
When $\vb u$ is sufficiently smooth we have the  equality,  $E(t) = E(0)$ for $t\in [0, T]$, as shown earlier. For more elaborate discussions on entropy-stability of nonlinear conservation laws and vanishing viscosity solutions please see~\cite{LeVeque1992conslaws,Tadmor2003entropystable}. We will make the discussion more formal with the following definition.
\begin{defn}\label{def:cont_nonlinear_stability}
    Consider the nonlinear conservation law \eqref{eq:conservation-law} with periodic boundary conditions and subject to the smooth initial condition $\vb{u}(\vb{x}, 0) = \vb{u}_0(\vb{x})$. The system \eqref{eq:conservation-law} is entropy-stable if $E(t) =  E(0)$, $\forall \, \vb{u} \in C^1\left(\Omega \times [0,T]\right)$ and $E(t) \le E(0)$, $\forall \, \vb{u} \notin C^1\left(\Omega \times [0,T]\right)$, where $E(t)$ denotes the total entropy at $t \in [0,T]$. 
\end{defn}
Robust and accurate numerical methods \cite{Tadmor2003entropystable,ChanRanochaHendrik_et_al2022,nordstrom2022linear,Sjögreen2019,LeVeque_2002,ricardo2023conservation,hew2024stronglystabledualpairingsummation,gassner2022stability,DAFERMOS1973202,HARTEN1983151} for nonlinear conservation laws are often designed by emulating Definition \ref{def:cont_nonlinear_stability} at the discrete level. However,  high-order numerical schemes can not satisfy the entropy inequality $E(t) \le E(0)$ for all convex entropy functions.
%. 
Nevertheless, we can identify a specific entropy pair $e, \vb{q}$ on which a numerical scheme can be based. 

This strategy of showing that some total energy/entropy-like quantity is non-increasing, that is $\partial_t E \leq 0$, is called the energy method. Note that the key steps in the  derivation of the entropy  estimate for our model are integration by parts, the  product rule, and the chain rule. 
However, while discrete numerical derivative and integration operators can be designed to emulate integration by parts, it is challenging (or impossible) to design numerical operators to mimic the chain rule or  product rule at the discrete level for general functions. To enable the development of robust and provably entropy-stable numerical schemes, equations at the continuous level must be rewritten into the so-called \emph{skew-symmetric form} so that we can forgo the use of chain/product rule when performing the energy method at the discrete level.

\subsection{Skew-symmetric splitting}
Typically, a skew-symmetric form is found by splitting the divergence of the flux, which is really finding a convex combination of the nonlinear conservation laws in flux form and the quasi-linear form  so that the energy method can be applied without the chain rule and/or the product rule. For a scalar PDE flux $f(u)$ we have
$\partial_x f(u) = (1-\alpha)\partial_x f +  \alpha  \partial_u f \partial_x u, \, 0 \le \alpha \le 1$, which is a convex combination of the flux form and the advective form.
For example, $\alpha = 1/3$ gives entropy conservation for the Burger's equation, with $f(u) = u^2/2$.
We will try to make the discussion more general so that it is applicable to the multi-dimensional model problem \eqref{eq:conservation-law}.
Let us consider the equivalent skew-symmetric form of \eqref{eq:conservation-law}
\begin{align} \label{eq:conservation-law_skew-symmetry}
\partial_t \vb{u} + \vb{F}(\vb u, \vb{f}(\vb{u}), \partial_{\vb x}) =0, \quad \vb{F}(\vb u, \vb{f}(\vb{u}), \partial_{\vb x}) \equiv  \div \vb f(\vb{u}), \quad \vb x \in \Omega \subset \R^d, \quad t \ge 0.
\end{align}
%%%
Here $\vb{F}(\vb u, \vb{f}(\vb{u}), \partial_{\vb x})$ is a differential operator and models the functional equivalence of  the divergence of the flux $\div \vb f(\vb{u})$ obtained by taking  a convex combination of the flux form and the quasi-linear form, as above, such that
\begin{align}\label{eq:skew-sym-flux}
\left(\vb g, \vb{F}(\vb u, \vb{f}(\vb{u}), \partial_{\vb x})\right) =\left(1, \div \vb q\right)=\oint_{\partial\Omega} (\vb q \cdot \vb n) \dd \vb x,
\end{align}
%%%
holds by only applying integration-by-parts, and without the chain/product rule. Here, $\vb{g}=\partial_{\vb u} e(\vb u)$ are the entropy variables. Consequently,  we can prove  entropy conservation using  only integration-by-parts, and without the chain/product rule.  That is 
\begin{align*} 
\underbrace{\left(\vb g, \partial_t\vb{u}\right)}_{\dv{}{t} E(t) = \dv{}{t}\left(\vb 1,  e\right)}+
\underbrace{\left(\vb g, \vb{F}(\vb u, \vb{f}(\vb{u}), \partial_{\vb x})\right)}_{\left(\vb 1, \div \vb q\right)} =0 \iff \dv{}{t} E(t) =  -\oint_{\partial\Omega} (\vb q \cdot \vb n) \dd \vb x =0.
\end{align*}
Other than the primary motivation of showing entropy-stability using only integration by parts, it is also desirable that the skew-symmetric reformulation ensure structure preservation. 
For numerical analysis it is critical that the skew-symmetric form ensure that the proof of the conservative property \eqref{eq:conservation} uses only integration by parts without the chain rule. That is, by using only integration by parts, we have 
\begin{align}\label{eq:conservation_flux} 
 \partial_t \vb{\mathcal{U}_i}=\left(\vb{1}, \partial_t\mathbf{u}_i\right)=-\underbrace{\left(\vb 1, \vb{F}_i(\vb u, \vb{f}(\vb{u}), \partial_{\vb x})\right)}_{\left(\vb 1, \div \vb f_i\right)}   =
        -\oint_{\partial\Omega} \left(\vb f_i(\vb{u}) \cdot \vb n\right) \dd{\vb x}, \forall \, i = 1, 2, \cdots, m.
\end{align}
%%%
% 
As noted earlier, for nonlinear hyperbolic conservation laws with weak solutions, the conservative properties of a numerical method are critical for the convergence of numerical solutions \cite{Lax-Wendroff-1960}.

\section{The dual-pairing summation by parts  operator for \texorpdfstring{$d/dx$}{d/dx}}\label{sec:SBP framework}
In order to  construct provably entropy-stable and conservative numerical methods for nonlinear conservation laws, we will mimic the entropy-stability and conservative properties of the continuous problem.
%%%
%%%
To replicate the continuous analysis at the discrete level our discrete differential operators must satisfy  the integration by parts principle.
To be specific, for $f, g \in C^1(\Omega)$, where $\Omega=[0, L]$ is the spatial domain, we introduce the integration by parts property for the first derivative operator $d/dx$, 
\begin{align}\label{eq:ibp}
      \left( \dv{f}{x}, g \right)  + \left( f, \dv{g}{x} \right)
      = B(fg)
      := \oint_{\partial \Omega} \left(fg\right) \cdot n_x \dd{x}
      = fg|_{x = L} - fg|_{x = 0},
\end{align}
%%%
where $n_x$ is the $x$-component of the normal vector on the boundary, that is $n_x = -1$ at $x = 0$ and $n_x = 1$ at $x = L$.
 Discrete approximations of the first derivative operator $d/dx$ that satisfy the integration by parts property \eqref{eq:ibp} are called SBP operators \cite{BStrand1994,gustafsson1995time,kreiss1974finite,williams2024drp,MATTSSON2017upwindsbp,gassner2013skew,taylor2010compatible,fluxo_SplitForm}.  We will give the formal definitions below. We will discuss the recent DP SBP framework of \cite{williams2024drp,MATTSSON2017upwindsbp}, formulated through the assumptions given in \cite{williams2024drp}. The traditional SBP operators \cite{BStrand1994,gustafsson1995time,kreiss1974finite,gassner2013skew,taylor2010compatible,fluxo_SplitForm} are a restriction of the DP SBP  framework to anti-symmetric discrete operators. 

For $n \in \mathbb{N}$ we define the uniformly spaced  grid points given by
    $x_{j} =(j-1) \Delta{x}$,  $\Delta{x} = {L}/{(n-1)}, \quad L>0$,  $j \in  \{1, 2, 3, \ldots, n\}$.
Given $f \in L^2([0, L])$ we  denote the grid function by 
    $\mathbf{f} :=  \left(f(x_1), \ldots, f(x_n)\right)^T \in \mathbb{R}^n$. 
We introduce the positive definite diagonal matrix $H := \mathrm{diag}\left( \left(h_1, \dots, h_n\right) \right) \in \mathbb{R}^{n \times n}$, $h_j >0$, $j \in \{ 1, \ldots, n \}$, and  the discrete inner-product
${\inp{\mathbf{f}}{\mathbf{g}}_H}
    := \mathbf{f}^T H \mathbf{g} =\sum_{j = 1}^n h_j f(x_j) g(x_j)$.
We note that $h_j = \Delta{x} w_j >0$ with the constants $w_j>0$ being the weights of a composite quadrature rule and $\Delta{x}>0$ is the spatial step.
In this way, for a grid function $\mathbf{g}$ the multiplication by $\mathbf{\mathbf{1}}^TH$ with $\mathbf{\mathbf{1}}= \left(1, \dots, 1\right)^T \in \mathbb{R}^n$, from the left, may be interpreted as a numerical quadrature  given
${\inp{\mathbf{1}}{\mathbf{g}}_H}
    =\sum_{j = 1}^n h_j  g(x_j) \approx \int_{0}^{L} g(x) dx$ .

Recently, the  DP SBP FD framework \cite{MATTSSON2017upwindsbp,DovgilovichSofronov2015,williams2024drp} was introduced to improve the accuracy and flexibility of numerical approximations of PDEs. The  DP SBP  operators are a pair of forward and backward finite difference stencils that together obey  the SBP property. As shown in \cite{williams2024drp}, the DP upwind SBP framework for the first derivative $d/dx$ \cite{MATTSSON2017upwindsbp,DovgilovichSofronov2015} can be expressed through the following assumptions:
\begin{enumerate}[($\mathbf{A}$.1)]
    \item\label{itm:dpsbp-measure} There exists $H: \mathbb{R}^n \to \mathbb{R}^n$ which defines a positive discrete measure
    \[
        \langle\mathbf{g}, \mathbf{g} \rangle_{H}
            = \mathbf{g}^T{H}\mathbf{g} > 0
            \quad \forall \, \mathbf{g} \in \mathbb{R}^n, 
        \quad\langle \mathbf{1}, \mathbf{g} \rangle_{H}
            = \sum_{j = 1}^n h_j  g(x_j) \to \int_{0}^{L} g(x) \dd{x}
            \quad \forall\, g \in L^2(\Omega).
    \]
%     %
%     %
    
    \item\label{itm:dpsbp-derivative} There exists a pair of linear operators $D_{\pm}: \mathbb{R}^n \to \mathbb{R}^n $ with
    $\left(D_{\pm} \mathbf{f}\right)_j = \dv{f}{x}|_{x=x_j}$,
    for all $j \in \{1, 2, \dots, n\}$  and $f \in V^p$ where $V^p$ is a polynomial space of at most degree $p \ge 0$.
    \item\label{itm:dpsbp-dpsbp} The linear operators $D_{\pm}$ together obey $\langle D_{+} \mathbf{f}, \mathbf{g} \rangle_{H} + \langle \mathbf{f}, D_{-} \mathbf{g} \rangle_{H} =f_ng_n - f_1g_1$ for all $\mathbf{f}, \, \mathbf{g} \in \mathbb{R}^n$.

    \item\label{itm:dpsbp-upwind} The linear operators $D_{\pm}$ together obey 
    %\begin{align}\label{eq:B4}
         $\langle \mathbf{f}, \left(D_{+}-D_{-} \right)\mathbf{f} \rangle_{H} \le 0$ for all $\mathbf{f} \in \mathbb{R}^n$.
\end{enumerate}
Assumption ($\mathbf{A}$.1) equips the solution space with a quadrature rule and a discrete $l_2$-norm defined by
$
\|\mathbf{g}\|^2_H :=\langle\mathbf{g}, \mathbf{g} \rangle_{H}> 0, \, \forall \, \mathbf{g} \in \mathbb{R}^n.
$
 Assumption ($\mathbf{A}$.2) encodes consistent discrete first derivative operators $D_{\pm}$ such that for a smooth function $u$, with $\vb u \in \R^{n}$ and  $u_j = u(x_j)$, we have
%%%
\[
  ({D_{\pm}}\mathbf{u})_j = \left.\dv{u}{x}\right|_{x=x_j} + \mathbb{T}_j^{\pm}, \quad \mathbb{T}_j^{\pm} = O(\Delta{x}^{p+1}), \quad \forall \, j \in \{1, 2, \cdots, n\}.  
\]
%%%
Assumption ($\mathbf{A}$.3) is the DP SBP property, which is equivalent
$Q_{\pm} = HD_{\pm}$,  $Q_- + Q_+^T ={\diag}([-1, 0, \cdots, 0, 1])$,  $H=H^T>0$.
In particular, Assumption ($\mathbf{A}$.3) equips $D_{\pm}$ with the integration by parts principle, which is critical for proving numerical stability.
Assumption ($\mathbf{A}$.4) is the upwind DP property, which is equivalent to
%%%
%
$A= H\left(D_{+}-D_{-} \right)$,  $A = A^T$,  $\mathbf{g}^TA\mathbf{g} \le 0$,  for all $\mathbf{g} \in \mathbb{R}^n$.
%%%
%%%
\begin{remark}
    The DP SBP DG framework of \cite{GLAUBITZ2025113841} satisfies Assumptions ($\mathbf{A}$.1)--($\mathbf{A}$.4) with $x_j$, $w_j>0$ being the nodes and weights of a Legendre-Gauss-Lobatto (LGL) numerical quadrature.
\end{remark}

The  SBP operators by themselves do not  result in stable numerical methods. For a given PDE system, SBP operators must be carefully applied in a manner that allows the proof of numerical stability, as long as suitable numerical treatments of boundary conditions are available.
As discussed in \cite{williams2024drp}, a main benefit of the DP-SBP framework \cite{MATTSSON2017upwindsbp, williams2024drp} over traditional SBP schemes \cite{BStrand1994} is that there are a pair of derivative operators and more degrees of freedom, which enable the flexibility and possibility of designing numerical schemes with special qualities, which could potentially capture  important characteristics of a target application.  For example, for conservative PDE systems, we can design energy conserving schemes, thanks to the DP-SBP property Assumption ($\mathbf{A}$.3), and the fact that if
%\begin{align}\label{eq:upwind}
    $\mathcal{Q}_{\pm} :=  {Q}_{\pm}  - \frac{1}{2}B$, ${Q}_{\pm} := H D_{\pm}$
%\end{align}
then $\mathcal{Q}_{-} +\mathcal{Q}_{+}^T =0$.
We can also construct entropy/energy dissipating schemes, for example to upwind transport terms in our systems or dissipate unresolved modes for nonlinear problems. This is mainly a consequence of upwind DP-SBP, Assumption ($\mathbf{A}$.4). In the current work we will focus on entropy dissipating schemes for robust and  accurate simulations of  nonlinear hyperbolic conservation laws.
\begin{remark}
The traditional SBP operator $(D, H)$—based on central FD~\cite{kreiss1974finite,BStrand1994,fernandez2014review,svard2014review}—or collocated DGSEM~\cite{gassner2013skew,taylor2010compatible,fluxo_SplitForm}—also satisfies the DP-SBP framework, that is Assumptions ($\mathbf{A}$.1)--($\mathbf{A}$.4), with   $D_- =D_+ =D$  and $\langle \mathbf{f}, \left(D_{+}-D_{-} \right)\mathbf{f} \rangle_{H} = 0$.
 Similarly, given the upwind DP-SBP operator $(D_-, D_+, H)$ which satisfies Assumptions ($\mathbf{A}$.1)--($\mathbf{A}$.4), the averaged operator
$
D := \frac12 \left(D_+ + D_-\right)
$
is a discrete derivative operator which satisfies the traditional SBP framework $(D, H)$. 
\end{remark}
%%%
\section{The numerical method and analysis}\label{sec:discrete_methods}
We derive a numerical method that replicates the continuous analysis at the discrete level by combining DP SBP operators to preserve model symmetries, ensure entropy stability, high-order accuracy, and maintain key invariants. In particular, we will outline how to construct conservative and provably entropy-stable semi-discrete schemes for nonlinear hyperbolic conservation laws using upwind DP-SBP operators in a multi-block framework. Although we focus on 1D systems for simplicity, these concepts extend to higher dimensions.

To begin, we consider the 1D hyperbolic conservation law
\begin{align} \label{eq:conservation-law_1D}
\partial_t \vb{u} + \partial_x \vb{f}(\vb{u}) = 0,  \,  x \in \Omega =[0, L], \quad  t \in [0,T],
\end{align}
where $\vb{f}(\vb{u}) $ is a continuous  flux function, $L>0$ is the length of the spatial interval $\Omega \subset \mathbb{R}$. We close the boundaries at $x \in \{0, L\}$ with periodic boundary conditions, that is  
\begin{align}\label{eq:periodic_bc}
   \vb{u}(0, t) = \vb{u}(L, t) \implies \vb{f}(\vb{u}(0, t)) = \vb{f}(\vb{u}(L, t)). 
\end{align}
%%%
Thus for smooth solutions,  the system \eqref{eq:conservation-law_1D} with \eqref{eq:periodic_bc}  is conservative and conserves total entropy, that is
%%%
\begin{align} \label{eq:conservation_skew-symmetric_1D}
 \partial_t \vb{\mathcal{U}}= \vb f(\vb{u}(0, t)) - \vb f(\vb{u}(L, t)) =0, \quad 
\dv{}{t} E(t) =   q(\vb{u}(0, t)) -  q(\vb{u}(L, t)) =0.
\end{align}
%%%
To be useful, numerical approximations of the nonlinear conservation law \eqref{eq:conservation-law_1D}, with  periodic boundary conditions \eqref{eq:periodic_bc}, must as far as possible mimic \eqref{eq:conservation_skew-symmetric_1D} at the discrete level, leading to conservative and entropy stable discrete approximations.%
\subsection{Multi-block DP SBP method}
We decompose the spatial domain into $K\ge 1$ elements/blocks and denote the $k$-th element/block $\Omega_k = [x_{k-1}, x_{k}]$, where  the element width is denoted $|\Omega_k| = x_k -x_{k-1} >0$, and the external boundaries are denoted $x_0 =0$, $x_{K} = L$. In each element the local solutions and fluxes are denoted ${\vec u}^k$ and ${\vec f}^k$. At an element interface $x_k$, shared by the two adjacent elements $\Omega_k$ and $\Omega_{k+1}$,  we will denote the jump of the solutions and fields across the interface as $\lJump {\vec u}^k\rJump = \left.{\vec u}^{k+1}\right|_{x = x_k}-\left.{\vec u}^{k}\right|_{x = x_k}$. 
\subsubsection{Multi-block DP SBP operators}
To keep the derivations and the analysis simple, we will split the spatial domain into two elements, with $K=2$ and
$\Omega = \Omega_k \cup \Omega_{k+1}$. The method and analysis extend to many more elements. For each element $\Omega_k$, $k \in \{1, 2\}$, we discretize the domain into $n$ grid points and sample the solutions on the grids. For the multi-block FD method the grids are equidistant nodes and for the DG method the grids are given by the LGL nodes. The semi-discrete solution is denoted $\mathbf{u}^k =  (\mathbf{u}_1^k, \mathbf{u}_2^k, \cdots, \mathbf{u}_n^k)^T$. The two elements are connected at the interface $x = x_k$ with the interface conditions
\[
\lJump{\vb{u}} \rJump:  = \mathbf{u}_1^{k+1}-\mathbf{u}_n^k= 0, \quad \lJump{\vb{f}} \rJump:  = \mathbf{f}_1^{k+1}-\mathbf{f}_n^k= 0.
\]
%%%
For the global solutions in the two elements, we introduce the  augmented  discrete solution vector and discrete flux vector, given by 
\[
\mathbf{u} = \begin{bmatrix}
\mathbf{u}_1^k, \cdots, \mathbf{u}_n^k,\mathbf{u}_1^{k+1},  \cdots, \mathbf{u}_n^{k+1}
\end{bmatrix}^T \in \mathbb{R}^{2mn}, \quad 
\mathbf{f} = \begin{bmatrix}
\mathbf{f}_1^k, \cdots, \mathbf{f}_n^k,\mathbf{f}_1^{k+1},  \cdots, \mathbf{f}_n^{k+1}
\end{bmatrix}^T  \in \mathbb{R}^{2mn}.    
\]
%%%
To implement the interface conditions and the periodic boundary conditions we also define the boundary and interface penalty matrices
{%\small
%\scriptsize
\footnotesize
%\small
\begin{align}\label{eq:SBP_SAT_and_Interface_Matrices}
\vec{B}_{I} = \begin{pmatrix}
-\vb{e}_n\vb{e}_n^T & \vb{e}_n\vb{e}_1^T\\
-\vb{e}_1\vb{e}_n^T & \vb{e}_1\vb{e}_1^T
\end{pmatrix},
\quad
\widetilde{\vec{B}}_{I} = \begin{pmatrix}
-\vb{e}_n\vb{e}_n^T & \vb{e}_n\vb{e}_1^T\\
\vb{e}_1\vb{e}_n^T & -\vb{e}_1\vb{e}_1^T
\end{pmatrix}, \quad \vec{B}_{n} = \begin{pmatrix}
\vb{e}_1\vb{e}_1^T & -\vb{e}_1\vb{e}_n^T\\
\vb{e}_n\vb{e}_1^T & - \vb{e}_n\vb{e}_n^T
\end{pmatrix},
\quad
\widetilde{\vec{B}}_{n} = \begin{pmatrix}
-\vb{e}_1\vb{e}_1^T & \vb{e}_1\vb{e}_n^T\\
\vb{e}_n\vb{e}_1^T & - \vb{e}_n\vb{e}_n^T
\end{pmatrix}.
\end{align}
}
For $m=1$, that is $\mathbf{u} \in \mathbb{R}^{2n}$ with $\lJump{\vb{u}}\rJump=u_1^{k+1}-u_n^{k}$, we have
{
$$
\vec{B}_{I}\mathbf{u} = \left(
0, \cdots, 0,\lJump{\vb{u}}\rJump, \lJump{\vb{u}}\rJump, 0,  \cdots, 0
\right)^T, \quad \widetilde{\vec{B}}_{I}\mathbf{u} = \left(
0, \cdots, 0,\lJump{\vb{u}}\rJump, -\lJump{\vb{u}}\rJump, 0,  \cdots, 0
\right)^T, 
$$
$$
\vec{B}_{n}\mathbf{u} = \left(
(u_1^{1}-u_n^{K}), 0,  \cdots, 0, (u_1^{1}-u_n^{K})
\right)^T, \quad \widetilde{\vec{B}}_{n}\mathbf{u} = \left(-
(u_1^{1}-u_n^{K}), 0,  \cdots, 0, (u_1^{1}-u_n^{K})
\right)^T.
$$
}
%%%
The interface matrices $\vec{B}_{I}$, $\widetilde{\vec{B}}_{I}$ will be used to weakly couple the elements together to the global domain and the boundary matrices $\vec{B}_{n}$, $\widetilde{\vec{B}}_{n}$ will weakly implement the period boundary conditions. Note also that the matrices $\widetilde{\vec{B}}_{I}$, $\widetilde{\vec{B}}_{n}$ are symmetric and negative semi-definite, that is $\widetilde{\vec{B}}_{I} = \widetilde{\vec{B}}_{I}^T$, $\widetilde{\vec{B}}_{n} = \widetilde{\vec{B}}_{n}^T$ and 
$
\mathbf{u}^T\widetilde{\vec{B}}_{I}\mathbf{u} = -\lJump{\vb{u}}\rJump^2=-(u_1^{k+1}-u_n^{k})^2\le 0,
$
$
\mathbf{u}^T\widetilde{\vec{B}}_{n}\mathbf{u} = -(u_1^{1}-u_n^{K})^2\le 0.
$
We introduce the multi-block operators
{\small
\begin{align}\label{eq:periodic_interface_SBP_SAT}
\vec{H} = \begin{pmatrix}
H & 0\\
0 & H
\end{pmatrix},
\quad
\vec{D} = \begin{pmatrix}
D & 0\\
0 & D
\end{pmatrix},
\quad
\vec{D}_{\pm} = \begin{pmatrix}
D_{\pm} & 0\\
0 & D_{\pm}
\end{pmatrix},
\quad 
\widetilde{\vec{D}}_{\pm} = \vec{D}_{\pm} + \frac12 \vec{H}^{-1}\left(\vec{B}_n + \vec{B}_I\right),
\end{align}
}
%%%
 where the penalty terms  weakly implement the interface and  the periodic boundary conditions. The penalized multi-block operators $\widetilde{\vec{D}}_{\pm}$ also satisfy the identities
 %\vspace{-0.25cm}
 {\small
\begin{align}\label{eq:B4_periodic}
\inp{\widetilde{\vec{D}}_{+} \mathbf{f}}{\mathbf{g}}_{\vec{H}} + \inp{ \mathbf{f}}{\widetilde{\vec{D}}_{-} \mathbf{g}}_{\vec{H}} = 0,
    \,
\inp{ \mathbf{f}}{\left(\widetilde{\vec{D}}_{+}-\widetilde{\vec{D}}_{-} \right)\mathbf{f}}_{\vec{H}}=    \inp{\mathbf{f}} {\left(\vec{D}_{+}-\vec{D}_{-} \right)\mathbf{f}}_{\vec{H}} \le 0, \quad \forall \, \mathbf{f}, \, \mathbf{g} \in \mathbb{R}^{2n}.
\end{align}
}
%%%
    % 
%
 For simplicity, we will ignore the $\sim$ in the penalized SBP operators and write $\vec{D}_{\pm}=\widetilde{\vec{D}}_{\pm}$ where $\widetilde{\vec{D}}_{\pm}$ are multi-block and  periodic operators implemented using penalties as in \eqref{eq:periodic_interface_SBP_SAT}.
 We define the multi-block spatial operators for systems
 $$\mathbf{D}_x= \left(I_m \otimes \vec{D}\right), \quad \mathbf{H}_x= \left(I_m \otimes \vec{H}\right), \quad{\vec{B}}_{Ix}= \left(I_m \otimes \widetilde{\vec{B}}_I\right), \quad{\vec{B}}_{nx}= \left(I_m \otimes \widetilde{\vec{B}}_n\right). 
 $$
 It also follows that
{
\begin{align}\label{eq:B4_periodic_systems}
\inp{{\vec{D}}_{x} \mathbf{f}}{\mathbf{g}}_{\vec{H}_x} + \inp{ \mathbf{f}}{{\vec{D}}_{x} \mathbf{g}}_{\vec{H}_x} = 0,
    \quad  \inp{\mathbf{f}} {\left( I_m \otimes \left(\vec{D}_{+}-\vec{D}_{-} \right)\right)\mathbf{f}}_{\vec{H}_x} \le 0, \quad \forall \, \mathbf{f}, \, \mathbf{g} \in \mathbb{R}^{2mn}.
\end{align}
}
and
$
{\vec{B}}_{Ix} = {\vec{B}}_{Ix}^T$, ${\vec{B}}_{nx} = {\vec{B}}_{nx}^T$,  $\mathbf{u}^T{\vec{B}}_{Ix}\mathbf{u}  \le 0$,  $\mathbf{u}^T{\vec{B}}_{Ix}\mathbf{u} \le 0$, for all $\mathbf{u} \in \mathbb{R}^{2mn}
$.
\subsubsection{Finite volume flux splitting and linear-stability}
To enable the effectiveness of the upwind  DP SBP operators we will need a splitting strategy. 
%%%%
%%%%
We will consider  classical flux splitting which is prevalent in the finite volume literature, see e.g. \cite{LeVeque1992conslaws,godunov1959,LeVeque_2002,LUNDGREN2020109784,ranocha2023highorder}.
%\todo[inline]{Insert classical finite volume papers}
%%%
In each element, $\Omega_k$, we introduce the upwind flux splitting where
\begin{align}\label{eq:flux-splitting}
{\vec f}_i^k(\vec u)  =  \underbrace{\frac12(\vec {f}_i^k +\gamma^k_i \vec{g}_i^k)}_{\vec f_i^+} +  \underbrace{\frac12(\vec f_i^k -\gamma^k_i \vec{g}_i^k)}_{\vec f_i^-} \equiv \vec f_i^+ + \vec f_i^-, \quad i = 1, 2, \ldots, m,
\end{align}
%%%
$\gamma_i^k \ge 0$ are (time-dependent) parameters and $\vec{g}_i^k$ are carefully chosen functions to induce dissipation when upwinded.  The parameters $\gamma_i^k > 0$ are chosen such that $\vec{f}_i^k$ and $\gamma_i^k \vec{g}_i^k$ have consistent units.
We will denote $\vec {f}^+$ the positive-going  flux and  $\vec {f}^-$ is  the negative-going flux. 
%%%%
If $\vec{g}_i^k=\vec{u}_i^k$ and $\gamma_i^k = \max_j |\lambda_j(A_x^k)|$ where $A_x^k = \partial_{\vec u^k}\vec f^k$, then we get the classical global but element-local Lax-Friederich flux splitting \cite{LeVeque1992conslaws,LeVeque_2002}.
%%%%
%%%%
%
%%%%
%%%%
Note that the element $\Omega_k$ is discretized on an $n$ grid points, and for a  uniform grid we have $\Delta{x} = |\Omega_k|/(n-1)$. We sample the solution and PDE flux on the grid, having
%%%
{%\small
\[
    \mathbf{u}^k(t) :=  \left(\mathbf{u}_1^k(t), \ldots, \mathbf{u}_n^k(t)\right)^T, \quad \mathbf{f}^{\pm}(\mathbf{u}^k) :=  \left({\vec f}^{\pm}(\mathbf{u}_1^k(t)), \ldots, {\vec f}^{\pm}(\mathbf{u}_n^k( t))\right)^T.
\]
}
%%%%
To approximate the divergence of the flux we will use the DP DG/FD operators to upwind the positive-going  flux  $\vec f^+$ and the negative-going flux $\vec f^-$  having 
%%%%
%%%%
%%%%
{\small
\[
    \partial_x \vec f(\vec u^k)  \approx \left(I_m \otimes {D}_{+}\right)\mathbf{f}^{-} + \left(I_m \otimes {D}_{-}\right)\mathbf{f}^{+}= \left(I_m \otimes {D}\right)\mathbf{f}^k - \frac{1}{2}\left(\Gamma^k \otimes\left({D}_{+}-{D}_{-}\right)\right)\mathbf{g}^k,
    \  {D} := \frac12 \left({D}_+ + {D}_-\right),
\]
}%
%%%
where $\Gamma^k = \fn{\diag}{[\gamma_1^k, \gamma_2^k, \cdots, \gamma_m^k]}$, $\gamma_i^k\ge 0$, and $\otimes$ denote the Kronecker product. 
Note that for smooth solutions, we have 
$ |\left(\Gamma^k \otimes\left({D}_{+}-{D}_{-}\right)\right)\mathbf{g}^k| = O(\Delta{x}^{p+1})$ where $p\ge 1$ is the order of accuracy of the discrete operators $D_{\pm}$.

 A two-element semi-discrete approximation of the systems of 1D conservation laws \eqref{eq:conservation-law_1D} reads
%%%%
{
\begin{align}\label{eq:semi-discrete-scalar-multi-block}
    \dv{}{t} \mathbf{u} + \mathbf{D}_x\mathbf{f} = \frac{1}{2}\mathbf{H}_x^{-1}\left(\boldsymbol{\alpha} \otimes\left(\vec{B}_{Ix}+\vec{B}_{nx}\right)\right)\mathbf{g} + \frac{1}{2}\Gamma\left(I_m \otimes\left(\vec{D}_{+}-\vec{D}_{-}\right)\right)\mathbf{g}, \quad 
\end{align}
}
%%%
% %%%%
where $I_{m} \in \mathbb{R}^{m\times m}$ is the identity matrix,  the diagonal and positive matrices $\boldsymbol{\alpha}$, $\boldsymbol{\Gamma}$ are given by
{\small
$$\boldsymbol{\alpha} = \frac{1}{2}\diag\left([\alpha_{1}^1+\alpha_{1}^2, \alpha_{2}^1+\alpha_{2}^2, \cdots, \alpha_{m}^1+\alpha_{m}^2]\right),
\quad \Gamma = \diag\left([\gamma_1^1, \gamma_2^1, \cdots, \gamma_m^1,\gamma_1^2, \gamma_2^2, \cdots, \gamma_m^2]\right)\otimes I_{2n},
$$
}
%%%
with $I_{2n} \in \mathbb{R}^{2n\times 2n}$ being the identity matrix, $\alpha_{i}^k, \gamma_i^k\ge0$, $\mathbf{f}$ is the discrete PDE flux and $\vec{g}$ is a carefully chosen function to induce dissipation in the volume. The first term in the right hand side of \eqref{eq:semi-discrete-scalar-multi-block} are surface terms at element faces induced by an upwind numerical flux, the second term in the right hand side of \eqref{eq:semi-discrete-scalar-multi-block} are volume terms inside the element and induced by the upwind property of the DP-SBP framework. Note that for the traditional SBP framework we have $\vec{D}_{+}=\vec{D}_{-}=\vec{D}$, thus, the volume upwind terms vanish identically which yields the standard multi-block SBP scheme or the DGSEM. It is also noteworthy that the semi-discrete approximation \eqref{eq:semi-discrete-scalar-multi-block} is compactly written for the two-elements case $K=2$. However, the method is valid in a single element, with $K=1$, and extends to arbitrary number of elements $K\ge 1$.

For the semi-discrete approximation \eqref{eq:semi-discrete-scalar-multi-block}, the discrete  conservative principle follows. We formulate the result as the theorem.
%%%
%%%%
\begin{theorem}
Consider the semi-discrete upwind DP-SBP approximation \eqref{eq:semi-discrete-scalar-multi-block} of the system \eqref{eq:conservation-law_1D}.
  If   the 3-tuple $(D_-, D_+, H)$ satisfies the upwind DP SBP operator given by Assumptions ($\mathbf{A}$.1)--($\mathbf{A}$.4), then the semi-discrete approximation \eqref{eq:semi-discrete-scalar-multi-block} is element-local and globally conservative.  That is, $\forall \, i = 1, 2, \cdots, m$, the total quantities $\vb{\mathcal{U}}^k_{ih}(t)={\inp{\vec 1}{ \vb{u}_i^k}_{H_x}}$ and $\vb{\mathcal{U}}_{ih} = \sum_{k=1}^{K}\vb{\mathcal{U}}^k_{ih}$ satisfy
%%%%%%
%\begin{align}\label{eq:semi-discrete-scalar-conservation}
{\small
    \begin{align}
   & \dv{}{t} \vb{\mathcal{U}}^k_{ih}(t)  = \frac12 \left(f_{ni}^{k-1}+f_{1i}^k - \frac{\alpha_{i}^{k-1}+\alpha_{i}^{k}}{2}\lJump{\vb{g}_{i}^{k-1}}\rJump\right) - \frac12 \left(f_{ni}^{k}+f_{1i}^{k+1} - \frac{\alpha_{i}^{k}+\alpha_{i}^{k+1}}{2}\lJump{\vb{g}_{i}^{k}}\rJump\right),
  \\
   \nonumber
   & \dv{}{t} \vb{\mathcal{U}}_{ih}(t)  =0.
  \end{align}
  }
\end{theorem}
%%%%%
\begin{proof}
    We consider $\dv{}{t}{\inp{\vec 1}{ \vb{u}_i^k}_{H_x}}$ and use the SBP property to obtain
   \begin{align*}
    \dv{}{t} \vb{\mathcal{U}}^k_{ih}(t)  &=\inp{ D {\vec 1}}{  \vb f_i^k}_H + \frac {\gamma_i^k}{2} \inp{ \left(D_+- D_-\right)\mathbf{1}}{\mathbf{g}_i^k}_H + \frac12 \left(f_{ni}^{k-1}+f_{1i}^k - \frac{\alpha_{i}^{k-1}+\alpha_{i}^{k}}{2}\lJump{\vb{g}_{i}^{k-1}}\rJump\right) \\
    &- \frac12 \left(f_{ni}^{k}+f_{1i}^{k+1} - \frac{\alpha_{i}^{k}+\alpha_{i}^{k+1}}{2}\lJump{\vb{g}_{i}^k}\rJump\right).
    \end{align*}
%%%
    Since $ D {\vec 1}=0$ and $ D_{\pm} {\vec 1}=0$, then the first two terms in the right hand side vanish and we have 
    \begin{align*}
    \dv{}{t} \vb{\mathcal{U}}^k_{ih}(t)  &= \frac12 \left(f_{ni}^{k-1}+f_{1i}^k - \frac{\alpha_{i}^{k-1}+\alpha_{i}^{k}}{2}\lJump{\vb{g}_{i}^{k-1}}\rJump\right) - \frac12 \left(f_{ni}^{k}+f_{1i}^{k+1} - \frac{\alpha_{i}^{k}+\alpha_{i}^{k+1}}{2}\lJump{\vb{g}_{i}^k}\rJump\right).
    \end{align*}
%%%
    Summing contributions from all elements the flux terms at the interfaces cancel out giving 
    $$
     \dv{}{t} \vb{\mathcal{U}}_{ih}(t)  =  \dv{}{t} \sum_{k=1}^{K}\vb{\mathcal{U}}^k_{ih}  =0.
    $$
%%%
    The proof is complete.
\end{proof}

%%%%%
% 
The proof of (nonlinear) entropy-stability for the semi-discrete approximation \eqref{eq:semi-discrete-scalar-multi-block} does not necessarily follow. Following \cite{LUNDGREN2020109784,ranocha2023highorder,GLAUBITZ2025113841}, we can prove that the semi-discrete approximation \eqref{eq:semi-discrete-scalar-multi-block} is linearly-stable.
%%%%%
%%%%%
%
We will formulate the result as the theorem.
%%%%%
%%%%%
\begin{theorem}
Consider the semi-discrete approximation \eqref{eq:semi-discrete-scalar-multi-block} for a  linear PDE flux, with  $\vec{f} = A_x\vec{u}$ where $A_x\in \mathbb{R}^{m\times m}$ a symmetric (or symmetrizable) constant matrix.
%
%%%
 For the classical global Lax-Friedrich flux splitting, that is if $\vec{g}^k=\vec{u}^k$ and $\gamma_i^k = \max_x |\max_j\lambda_j(A_x^k)|$, and  the 3-tuple $(D_-, D_+, H)$ satisfies the upwind DP SBP operator given by Assumptions ($\mathbf{A}$.1)--($\mathbf{A}$.4), with $\|\mathbf{u}\|_{H_x}^2=  \sum_{k=1}^{K}\|\mathbf{u}^k\|_{H}^2$, then we have
%%%%%%
\begin{align}\label{eq:semi-discrete-scalar-energy}
\dv{}{t} \|\mathbf{u}\|_{H_x}^2= -\sum_{k=1}^{K}\sum_{i=1}^m\frac{\alpha_{i}^{k} + \alpha_{i}^{k+1}}{2}\lJump{\mathbf{u}_i^k}\rJump^2  -\sum_{k=1}^{K}\sum_{i=1}^m{\gamma_i^k\inp{ \mathbf{u}_i^k} {\left(D_{-}-D_{+} \right)\mathbf{u}_i^k}_{H}}  \le 0.
\end{align}
%%%%%%
%where $A = H(D_+ - D_-)$ is a symmetric and negative semi-definite matrix.
\end{theorem}
\begin{proof}
   We set $\vec{f} = A_x\vec{u}$ where $A_x\in \mathbb{R}^{m\times m}$ a symmetric constant coefficients matrix and we proceed with the energy method. That is from the left, we multiply \eqref{eq:semi-discrete-scalar-multi-block} with $\mathbf{u}^T\left(I_m \otimes H\right)$ and add the transpose of the product, and we have
   \begin{align*}
   \dv{}{t} \|\mathbf{u}^k\|_{H_x}^2&= -\underbrace{(\mathbf{u}^k)^{T}\left(A_x \otimes \left(Q+Q^T\right)\right)\mathbf{u}_k}_{(\mathbf{u}^k_n)^TA_x\mathbf{u}^k_n-(\mathbf{u}_1^k)^TA_x\mathbf{u}^k_1} - \frac{1}{2}(\mathbf{u}_1^k)^T\left(A_x\mathbf{u}^k_1 - A_x\mathbf{u}^{k-1}_n +\frac{\alpha_{i}^{k-1} + \alpha_{i}^{k}}{2}\lJump{\vb{u}^{k-1}}\rJump \right) \\
   &+ \frac{1}{2}(\mathbf{u}_n^k)^T\left(A_x\mathbf{u}^{k}_n - A_x\mathbf{u}^{k+1}_1 +\frac{\alpha_{i}^{k} + \alpha_{i}^{k+1}}{2}\lJump{\vb{u}^{k}}\rJump \right)  -  \sum_{i=1}^m{\gamma_i^k\inp{ \mathbf{u}_i^k} {\left(D_{-}-D_{+} \right)\mathbf{u}_i^k}_{H}}.   
   \end{align*}
%%%
   Simplifying further we have
   \begin{align*}
   \dv{}{t} \|\mathbf{u}^k\|_{H}^2&=   \frac{1}{2}(\mathbf{u}_1^k)^T\left(A_x\mathbf{u}^{k-1}_n + A_x\mathbf{u}^{k}_1 -\frac{\alpha_{i}^{k-1} + \alpha_{i}^{k}}{2}\lJump{\vb{u}^{k-1}}\rJump \right) \\
   &-\frac{1}{2}(\mathbf{u}_n^k)^T\left(A_x\mathbf{u}^{k}_n + A_x\mathbf{u}^{k+1}_1 -\frac{\alpha_{i}^{k} + \alpha_{i}^{k+1}}{2}\lJump{\vb{u}^{k}}\rJump \right)    -  \sum_{i=1}^m{\gamma_i^k\inp{ \mathbf{u}_i^k} {\left(D_{-}-D_{+} \right)\mathbf{u}_i^k}_{H}}.   
   \end{align*}
%%%
    Note that the periodic boundaries give 
$
\frac{\alpha_i^{K} + \alpha_i^{K+1}}{2}\lJump{\mathbf{u}_i^K}\rJump^2 =\frac{\alpha_i^{K} + \alpha_i^{1}}{2}({\mathbf{u}_{i1}^{1}-\mathbf{u}_{in}^K})^2.
$
   Summing contributions from all elements the PDE flux terms at the interfaces cancel out giving 
    $$
     \dv{}{t} \|\mathbf{u}\|_{H_x}^2= \dv{}{t} \sum_{k=1}^{K}\|\mathbf{u}^k\|_{H_x}^2 = -\sum_{k=1}^{K}\sum_{i=1}^m\frac{\alpha_{i}^{k} + \alpha_{i}^{k+1}}{2}\lJump{\mathbf{u}_i^k}\rJump^2 -\sum_{k=1}^{K}\sum_{i=1}^m{\gamma_i^k\inp{ \mathbf{u}_i^k} {\left(D_{-}-D_{+} \right)\mathbf{u}_i^k}_{H}}  \le 0.
    $$
%%%
    This completes the proof.
\end{proof}
The linearly-stable upwind semi-discrete approximation \eqref{eq:semi-discrete-scalar-multi-block} extends to multi-dimensional nonlinear conservation laws via tensor products and various flux splitting methods, such as global Lax-Friedrich, Steger-Warming, and van Leer-Hänel \cite{LeVeque1992conslaws,godunov1959,LeVeque_2002}. For detailed discussions, see \cite{LUNDGREN2020109784,ranocha2023highorder}. 
This study focuses on provably nonlinearly/entropy stable, conservative, high-order schemes for nonlinear hyperbolic systems. Numerical results presented later in this study demonstrate that nonlinearly-stable schemes can be more robust than linearly/entropy stable ones, especially for shocks and turbulent flows.

\subsubsection{The entropy-stable semi-discrete approximation}
We now describe a method which can yield a provably nonlinearly/entropy stable, conservative, and high-order accurate numerical method for systems of nonlinear conservation laws. The derived numerical method is robust and minimizes spurious oscillations. The method combines three main ingredients: 1) \textbf{the DP SBP DG/FD operators}, together with 2) \textbf{skew-symmetric reformulation}, and 3) \textbf{upwind flux splitting}.

Again, for simplicity we consider the 1D conservation law,
\[
    \partial_t \vec u + \partial_x \vec f(\vec u) = 0 \iff \partial_t \vb{u} + \vb{F}(\vb u, \vb{f}(\vb{u}), \partial_x) =0,
\]
%%%
 where $\vb{F}(\vb u, \vb{f}(\vb{u}), \partial_x)\equiv \partial_x \vec f(\vec u)$ is the skew-symmetric form given in \eqref{eq:conservation-law_skew-symmetry}  and defined through \eqref{eq:skew-sym-flux}  such that entropy stability can be proven by using the energy method with only integration by parts, and without the chain/product rule. 
 We will begin with the following Lemma which is a discrete analogue of \eqref{eq:skew-sym-flux}.
 %%%
 %%%
 \begin{lemma}
    Consider the semi-discrete approximation of the skew-symmetric form of the element local and global (multi-elements) differential operators
    \[
        \vb{F}( \vec u^k, \vec{f}({\vec u}^k), {D}) \approx \vb{F}(\vb u^k, \vb{f}(\vb{u}^k), \partial_x),
        \quad
        \vb{F}( \vec u, \vec{f}({\vec u}), \vec{D}_x) \approx \vb{F}(\vb u, \vb{f}(\vb{u}), \partial_x),
    \]
%%%
    where $D\approx \partial_x$ is the local derivative without flux corrections, $\vec{f}({\vec u}^k)$ is the PDE flux and $\vec{D}_x \approx \partial_x$ is the global derivative operator which includes flux corrections at the element boundaries. If the 2-tuple $(D,H)$  satisfy the traditional SBP operator, then
    $$
{\inp{\vec g^k}{ \vb{F}(\vb u^k, \vb{f}(\vb{u}^k),  D)}_H}=-{\inp{D \vec 1}{   \vb q^k}_H + \vb q^k\left.\right|_{x_{k}} - \vb q^k\left.\right|_{x_{k-1}} = \vb q^k\left.\right|_{x_{k}} - \vb q^k \left.\right|_{x_{k-1}}},
    $$
    $$
    {\inp{\vec g}{ \vb{F}(\vb u, \vb{f}(\vb{u}),  \vec{D}_x)}_{H_x}}=-{\inp{\vec{D}_x \vec 1}{   \vb q}_{H_x}} =0,
    $$
%%%
     where $\vb g^k \in \mathbb{R}^{mn}$, $\vb g \in \mathbb{R}^{Kmn}$ are the semi-discrete  entropy variables and $\vb q^k \in \mathbb{R}^{mn}$, $\vb q \in \mathbb{R}^{Kmn}$ are the semi-discrete entropy flux.
 \end{lemma}
 Next we generalize the flux splitting \eqref{eq:flux-splitting}, by introducing the upwind matrix $\Gamma^k \in \mathbb{R}^{m\times m}$, which will  yield entropy stability when upwinded with the DP SBP DG/FD operators. Specifically, we chose 
 \begin{align}\label{eq:flux-splitting-general}
{\vec f}^k(\vec u)  =  \underbrace{\frac12(\vec {f}^k + \Gamma^k \vec{g}^k)}_{\vec f^+} +  \underbrace{\frac12(\vec f^k -\Gamma^k \vec{g}^k)}_{\vec f^-} \equiv \vec f^+ + \vec f^-,
\end{align}
 with the entropy variables $\vec{g}^k_i = \partial_{\vec u_i^k} e({\vec u}^k)$, and $\alpha_i^k > 0$ and $\Gamma^k \in \mathbb{R}^{m\times m}$ are constants within the element and chosen such that $\vec{f}_i^k$, $(\Gamma^k \vec{g}^k)_i$ and $\alpha_i^k \vec{g}_i^k$ have consistent physical units. In addition the symmetric part of $\Gamma^k$ is positive semi-definite $\Gamma^k + (\Gamma^{k})^T \ge 0$. The flux splitting \eqref{eq:flux-splitting} corresponds to a diagonal upwind matrix with $\Gamma^k = \fn{\diag}{[\gamma_1^k, \gamma_2^k, \cdots, \gamma_m^k]}$.
Combining  \textbf{the skew-symmetric form} with  \textbf{entropy-stable upwind splitting}, and  \textbf{the DP SBP operators} yield the  two-element semi-discrete approximation of the systems of 1D conservation laws \eqref{eq:conservation-law_1D}
%%%%
\begin{align}\label{eq:skew_symm_upwind_SBP_SAT-multi-block}
    \dv{}{t} \mathbf{u} +\vb{F}( \vec u, {\vec f}({\vec u}), \vec{D}_x) = \frac{1}{2}\mathbf{H}_x^{-1}\left(\boldsymbol{\alpha} \otimes\left(\vec{B}_{Ix}+\vec{B}_{nx}\right)\right)\mathbf{g} + \frac{1}{2}\Gamma\left(I_m \otimes\left(\vec{D}_{+}-\vec{D}_{-}\right)\right)\mathbf{g}, 
\end{align}
%%%
where $I_{m} \in \mathbb{R}^{m\times m}$ is the identity matrix and the  matrices $\boldsymbol{\alpha}, \boldsymbol{\Gamma} \in \mathbb{R}^{2mn\times 2mn}$ are given by
{\small
$$
\boldsymbol{\alpha} = \frac{1}{2}\diag\left([\alpha_{1}^1+\alpha_{1}^2, \alpha_{2}^1+\alpha_{2}^2, \cdots, \alpha_{m}^1+\alpha_{m}^2]\right), \quad
\Gamma = \diag\left([\Gamma^1,\Gamma^2]\right)\otimes I_{2n},
$$
}%
%%%
%
with $I_n \in \mathbb{R}^{2n\times 2n}$ being the identity matrix, and $\alpha_{i}^k\ge0$, $\Gamma^k + (\Gamma^{k})^T \ge 0$. As before, the first term in the right hand side of \eqref{eq:skew_symm_upwind_SBP_SAT-multi-block} are surface terms at element faces induced by an upwind numerical flux, and the second term in the right hand side of \eqref{eq:skew_symm_upwind_SBP_SAT-multi-block} are volume terms inside the element, and result from the upwind property of the DP SBP framework. For the traditional SBP framework we have $\vec{D}_{+}=\vec{D}_{-}=\vec{D}$, thus, the volume upwind terms vanish identically which yields the standard entropy stable multi-block SBP scheme or the DGSEM. It is also noteworthy that the semi-discrete approximation \eqref{eq:skew_symm_upwind_SBP_SAT-multi-block} is compactly written for the two-elements case $K=2$. However, the method is valid in a single element, with $K=1$, and extends to an arbitrary number of elements $K\ge 1$. 
%  
%%%
Again, for smooth solutions, the right hand side of \eqref{eq:skew_symm_upwind_SBP_SAT-multi-block} is very small, that is 
$ |\left(\Gamma^k \otimes\left(D_{+}-D_{-}\right)\right)\mathbf{g}^k| = O(\Delta{x}^{p+1})$ where $p\ge 1$ is the order of accuracy of the discrete operators $D_{\pm}$.
%%%

We note that the main difference between our proposed method \eqref{eq:skew_symm_upwind_SBP_SAT-multi-block} and the so-called upwind methods \cite{LUNDGREN2020109784,ranocha2023highorder} exemplified by \eqref{eq:semi-discrete-scalar-multi-block} is the discretization of the semi-discrete flux form $\mathbf{D}_x\mathbf{f} \approx \partial_x\mathbf{f}$ and the skew-symmetric form $\vb{F}( \vec u, {\vec f}({\vec u}), \vec{D}_x) \approx \partial_x\mathbf{f}$, on the left hand sides of \eqref{eq:skew_symm_upwind_SBP_SAT-multi-block}, \eqref{eq:semi-discrete-scalar-multi-block}, and the choice of the upwind variables in $\mathbf{g}$ in the right hand sides of  \eqref{eq:skew_symm_upwind_SBP_SAT-multi-block}, \eqref{eq:semi-discrete-scalar-multi-block}. Herein, in \eqref{eq:skew_symm_upwind_SBP_SAT-multi-block}, we have chosen the skew-symmetric form $\vb{F}( \vec u, {\vec f}({\vec u}), \vec{D}_x)$,  the entropy variables $\vec{g}_i^k = \partial_{\vec u_i^k} e({\vec u^k})$ and the volume upwind matrices $\Gamma$ such that we can prove nonlinear/entropy stability at the semi-discrete level.
\begin{remark}
 The significance and novelty of the current study is the systematic combination of the DP SBP FD/DG operators together with the skew-symmetric and upwind flux splitting that allows the design of robust and high-order accurate numerical schemes for systems of nonlinear hyperbolic conservation laws. To the best of our knowledge, this has never been reported in the literature.
\end{remark}
Next, we will prove entropy stability for the method \eqref{eq:skew_symm_upwind_SBP_SAT-multi-block}. To begin, we approximate the total entropy \eqref{eq:total_entropy} by 
%%%
%%%
\begin{align}\label{eq:total_entropy_discrete}
    E_h(t):= \inp{\vec 1}{  e}_{H_x}=\sum_{k=1}^{K} \inp{\vec 1}{  e^k}_H.
\end{align}
%%%
%%%
The following definition, which is similar to the continuous analogue Definition \ref{def:cont_nonlinear_stability}, is central to this study.
\begin{defn}\label{def:disc_nonlinear_stability}
Consider the semi-discrete approximation \eqref{eq:skew_symm_upwind_SBP_SAT-multi-block} and the denote the semi-discrete total entropy $E_h(t) \ge 0$, defined by \eqref{eq:total_entropy_discrete}. The  semi-discrete approximation \eqref{eq:skew_symm_upwind_SBP_SAT-multi-block} is called entropy-stable (entropy-conserving) if  $E(t) \le E_h(0)$ ($E_h(t) = E_h(0)$) for all $\vb{u}^k \in \mathbb{R}^{mn}$, $k=1, 2, \cdots, K$ and $t \in [0,T]$. 
\end{defn}
 %%%
Our main theorem proves that the semi-discrete approximation \eqref{eq:skew_symm_upwind_SBP_SAT-multi-block} is  entropy-stable.
%%%
% %\vspace{-0.25cm}
  \begin{theorem}
Consider the multi-element DP-SBP semi-discrete approximation \eqref{eq:skew_symm_upwind_SBP_SAT-multi-block} where $\vb{F}( \vec u, \vec{f}({\vec u}), \vec{D}_x)$ is the multi-block semi-discrete approximation of the skew-symmetric form with  periodic boundary conditions.
  If $\vec{g}_i^k = \partial_{\vec u_i^k} e({\vec u}^k)$, $\alpha_i^k\ge 0$ and $\Gamma^k + (\Gamma^{k})^T \ge 0$, and  the 3-tuple $(D_-, D_+, H)$ satisfies the upwind DP SBP operator, then we have
%%%%%%
%%\vspace{-0.25cm}
{\small
\begin{align}\label{eq:semi-discrete-scalar-energy_theorem}
\dv{}{t} E_h(t)
            =  -\frac{1}{2}\sum_{k=1}^{K}\sum_{i=1}^m\frac{\alpha_{i}^k + \alpha_{i}^{k+1}}{2}\lJump{\mathbf{g}_i^k}\rJump^2-\frac{1}{2}\sum_{k=1}^{K}{\inp{ \mathbf{g}^k} {\left(\left(\frac{\Gamma^k + (\Gamma^{k})^T}{2}\right)\otimes\left(D_{-}-D_{+} \right)\right)\mathbf{g}^k}_{H}} 
              \le 0.
\end{align}
}
\end{theorem}
\begin{proof}
   Consider
   {\small
   %\footnotesize
        \begin{align*} 
            \underbrace{\inp{\vec g}{ \partial_t\vb{u}}_{H_x}}_{\dv{}{t} E_h(t) = \dv{}{t}\inp{\vec 1}{  e}_{H_x}}
            + \underbrace{\inp{\vec g}{ \vb{F}(\vb u, \vb{f}(\vb{u}),  \vec{D}_x)}_{H_x}}_{\inp{\vec 1}{ \vec{D}_x \vb q}_{H_x} = 0}
            =  &-\frac{1}{2}\sum_{k=1}^{K-1}\sum_{i=1}^m\frac{\alpha_{i}^k + \alpha_{i}^{k+1}}{2}\lJump{\mathbf{g}_i^k}\rJump^2\\
            &-\frac{1}{2}\sum_{k=1}^{K}{\inp{ \mathbf{g}^k} {\left(\left(\frac{\Gamma^k + (\Gamma^{k})^T}{2}\right)\otimes\left(D_{-}-D_{+} \right)\right)\mathbf{g}^k}_{H}} 
            \le 0.
        \end{align*}
    }
    Note that the periodic boundaries give 
$
\frac{\alpha_i^{K} + \alpha_i^{K+1}}{2}\lJump{\mathbf{g}_i^K}\rJump^2 =\frac{\alpha_i^{K} + \alpha_i^{1}}{2}({\mathbf{g}_{i1}^{1}-\mathbf{g}_{in}^K})^2.
$
%%%
    Then we have
     {\small
   %\footnotesize
        \begin{align*} 
            \dv{}{t} E_h(t)
            =  &-\frac{1}{2}\sum_{k=1}^{K}\sum_{i=1}^m\frac{\alpha_{i}^k + \alpha_{i}^{k+1}}{2}\lJump{\mathbf{g}_i^k}\rJump^2-\frac{1}{2}\sum_{k=1}^{K}{\inp{ \mathbf{g}^k} {\left(\left(\frac{\Gamma^k + (\Gamma^{k})^T}{2}\right)\otimes\left(D_{-}-D_{+} \right)\right)\mathbf{g}^k}_{H}}  \le 0.
        \end{align*}
    }
%%%
    %
\end{proof}
\subsection{Numerical conservation properties}
%\subsubsection{Numerical conservation properties}
We will show that our semi-discrete scheme \eqref{eq:skew_symm_upwind_SBP_SAT-multi-block} is conservative. Conservative properties of the scheme will be important in ensuring the convergence of the numerical
method for nonlinear problems with weak solutions \cite{Lax-Wendroff-1960,godunov1959}. In particular, we will show that \eqref{eq:skew_symm_upwind_SBP_SAT-multi-block} satisfies the discrete equivalence of \eqref{eq:conservation_flux}. Here we will focus on global conservation properties of the scheme. However, DG-type element local conservation properties can also be shown. First, we state the following Lemma.
%%%
 \begin{lemma}\label{lem:conservative_flux}
     Consider the multi-element semi-discrete approximation of the skew symmetric form
     \begin{align}
         \vb{F}( \vec u, \vec{f}({\vec u}), \vec{D}_x) \approx \vb{F}(\vb u, \vb{f}(\vb{u}), \partial_x),
     \end{align}
     where $\vec{D}_x\approx \partial_x$  including flux corrections at element boundaries, $\vec{f}({\vec u})$ is the PDE flux and $\vb{F}(\vb u, \vb{f}(\vb{u}), \partial_x)$ is the skew-symmetric form of the divergence of the flux satisfying \eqref{eq:conservation_flux}. If $\vec{D}_x = I_m \otimes \vec{D} $ and  the 2-tuple $(\vec{D},\vec{H})$ satisfies the first identity in  \eqref{eq:B4_periodic_systems}, 
     then
     \begin{align}
         {\inp{\vec 1}{ \vb{F}_i(\vb u, \vb{f}(\vb{u}),  \vec{D}_x)}_{H_x}}=-{\inp{ \vec{D} {\vec 1}}{  \vb f_i}_{H} = \vb{0}}, \quad \forall \, i = 1, 2, \cdots, m, 
     \end{align}
%%%
     where  $\vb f_i \in \mathbb{R}^{2n}$ are the components of the semi-discrete PDE flux restricted on the grid.
 \end{lemma}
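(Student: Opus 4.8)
The plan is to transcribe the continuous proof of \eqref{eq:conservation_flux} into the discrete setting, replacing integration by parts by the SBP relation of Assumption (A.3). I would first reduce the lemma to the single structural identity
$$\inp{\vec 1}{\vb{F}_i(\vb u, \vb{f}(\vb u), D)}_H = \inp{\vec 1}{D\vb{f}_i}_H.$$
Granting this, the periodic SBP relation $\inp{D\vb{a}}{\vb{b}}_H + \inp{\vb{a}}{D\vb{b}}_H = 0$ (which holds because the penalty \eqref{eq:periodic_SBP_SAT} upgrades the SBP matrix relation $Q+Q^T=B$ of (A.3) to $Q+Q^T=0$, with $Q=HD$) applied with $\vb{a}=\vec 1$, $\vb{b}=\vb{f}_i$ gives $\inp{\vec 1}{D\vb{f}_i}_H = -\inp{D\vec 1}{\vb{f}_i}_H$. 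Finally, consistency (Assumption (A.2), $p\ge 0$) gives $D\vec 1 = \vec 0$, which also survives the penalty since $B_N\vec 1 = \vec 0$ by \eqref{eq:A2_periodic}; hence $-\inp{D\vec 1}{\vb{f}_i}_H = 0$, completing the stated chain.

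For the structural identity I would use that, by construction \eqref{eq:sekew-symm-flux}, $\vb{F}_i(\vb u, \vb{f}(\vb u), \partial_x)$ is a fixed linear combination of conservative terms $\partial_x a_k$ and non-conservative products $b_k\,\partial_x c_k$, arranged so that \eqref{eq:conservation_flux} follows using integration by parts alone. Discretising sends $\partial_x\mapsto D$ and each pointwise product to a Hadamard product $\odot$. Pairing against $\vec 1$, a conservative term yields $\inp{\vec 1}{D\vb{a}_k}_H$, while a non-conservative product yields $\inp{\vec 1}{\vb{b}_k\odot D\vb{c}_k}_H = \inp{\vb{b}_k}{D\vb{c}_k}_H$, the last equality holding because $H$ is diagonal. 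I would then apply the periodic SBP relation to each such pairing in exactly the order the continuous argument applies integration by parts; since that argument never invokes a product or chain rule for $\partial_x$, each manipulation has an exact discrete counterpart, the interior contributions cancel in the same pattern, and what remains is $\inp{\vec 1}{D\vb{f}_i}_H$.

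The main obstacle is making this term-by-term transcription precise without reference to a particular flux, i.e. arguing that the convex combination of flux-form and quasilinear terms defining the skew-symmetric form never secretly relies on the discrete product rule $D(\vb{a}\odot\vb{b}) = \vb{a}\odot D\vb{b} + \vb{b}\odot D\vb{a}$, which fails pointwise. The construction sidesteps this precisely because every product-rule defect occurs only inside a pairing with $\vec 1$, where discrete SBP converts it into a boundary contribution that vanishes under periodicity, mirroring the continuum. I would make the mechanism explicit on the model cases before asserting the general result: for Burgers' equation, $\inp{\vec 1}{\tfrac13\vb u\odot D\vb u + \tfrac13 D(\vb u\odot\vb u)}_H$ collapses via $\inp{\vb u}{D\vb u}_H = \tfrac12\!\left(\inp{\vb u}{D\vb u}_H + \inp{D\vb u}{\vb u}_H\right) = 0$ and $\inp{\vec 1}{D(\vb u\odot\vb u)}_H = -\inp{D\vec 1}{\vb u\odot\vb u}_H = 0$, and analogously for the shallow water system, after which the general statement follows because \eqref{eq:conservation_flux} is itself established using integration by parts alone, which (A.3) replicates exactly.
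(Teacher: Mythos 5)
Your proposal is correct, but note that the paper never actually proves this lemma: it is stated without proof, then invoked as a hypothesis (``If Lemma~\ref{lem:conservative_flux} holds\ldots'') in the subsequent conservation theorem, and its content is verified only a posteriori on the model systems, in the examples computing mass conservation for Burgers and mass/momentum conservation for the shallow water equations. Your transcription argument therefore fills a gap rather than retracing a proof, and each of its ingredients is sound: (i) the penalty \eqref{eq:periodic_SBP_SAT} does upgrade the SBP relation to exact antisymmetry, since $B_N + B_N^T = \vb{e}_1\vb{e}_1^T - \vb{e}_N\vb{e}_N^T = -B$, giving $H\widetilde{D} + (H\widetilde{D})^T = Q + Q^T + B_N + B_N^T = B - B = 0$, which is the matrix form of \eqref{eq:B4_periodic}; (ii) $D\vec{1} = \vec{0}$ follows from (A.2) with $p \ge 0$ and survives the penalty because $B_N\vec{1} = \vec{0}$ by \eqref{eq:A2_periodic}; (iii) diagonality of $H$ lets pointwise factors cross the discrete pairing, $\inp{\vec 1}{\vb{b}\odot D\vb{c}}_H = \inp{\vb{b}}{D\vb{c}}_H$, exactly as in the weighted $L_2$ product. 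Your Burgers and shallow-water collapses coincide with the computations the paper performs in its examples, so the computational core is the same; what your route buys is a uniform schema: under the lemma's standing hypothesis that the continuous identity \eqref{eq:conservation_flux} is established by integration by parts alone, every continuous manipulation is linearity, factor-crossing, IBP, or $\partial_x 1 = 0$, and (i)--(iii) supply exact discrete counterparts, so no per-system product-rule defect can arise. The one caveat worth stating explicitly in your write-up is that this remains a meta-argument --- all system-specific content is concentrated in your structural identity $\inp{\vec 1}{\vb{F}_i(\vb u, \vb{f}(\vb{u}), D)}_H = \inp{\vec 1}{D\vb{f}_i}_H$, which must be instantiated for each skew-symmetric form as you do for the models --- but this is no weaker than the paper itself, which uses the lemma in precisely that hypothesis-plus-verification mode.
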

 The second main theorem proves that the DP-SBP method \eqref{eq:skew_symm_upwind_SBP_SAT-multi-block} is globally conservative.
  \begin{theorem}
Consider the  multi-block upwind DP-SBP semi-discrete approximation \eqref{eq:skew_symm_upwind_SBP_SAT-multi-block} where $\vb{F}( \vec u, \vec{f}({\vec u}), \vec{D}_x)$ is the semi-discrete approximation of the symmetric form with the interface and   periodic boundary conditions implemented weakly.
  If Lemma \ref{lem:conservative_flux} holds and  the 3-tuple $(D_-, D_+, H)$ satisfies the  DP SBP operator given by Assumptions ($\mathbf{A}$.1)--($\mathbf{A}$.4), then the semi-discrete total quantity $\vb{\mathcal{U}}_{ih}(t)={\inp{\vec 1}{ \vb{u}_i}_H}$, for all $i = 1, 2, \cdots, m$, satisfies
%%%%%%
{\small
\begin{align}\label{eq:semi-discrete-scalar-conservation}
    \dv{}{t} \vb{\mathcal{U}}_{ih}(t) =\inp{ \vec{D} {\vec 1}}{  \vb f_i}_H + \frac {\alpha_i}{2} \inp{ \left(\vec{B}_{I}+\vec{B}_{n}\right)\mathbf{1}}{\mathbf{g}_i} + \frac {1}{2} \inp{ \left(\vec{D}_+- \vec{D}_-\right)\mathbf{1}}{(\Gamma\mathbf{g})_i}_H = \vb{0}.
\end{align}
}
%%%%%% 
\end{theorem}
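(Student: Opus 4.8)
The plan is to reproduce the short tested-equation argument used for the earlier conservation theorem, but now leaning on Lemma~\ref{lem:conservative_flux} to dispose of the skew-symmetric flux functional. First I would test the semi-discrete system \eqref{eq:skew_symm_upwind_SBP_SAT} componentwise against the constant grid function $\vec 1$ in the $H$-weighted inner product. For each $i = 1, 2, \ldots, m$ this gives
\begin{align*}
\inp{\vec 1}{\partial_t \vb u_i}_H + \inp{\vec 1}{\vb F_i(\vb u, \vb f(\vb u), D)}_H = \frac{\gamma_i}{2}\inp{\vec 1}{\left(D_+ - D_-\right)\vb g_i}_H.
\end{align*}
Since $H$ is time-independent, the first term is, by the definition of $\vb{\mathcal{U}}_{ih}(t) = \inp{\vec 1}{\vb u_i}_H$, exactly $\dv{}{t}\vb{\mathcal{U}}_{ih}(t)$.

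Next I would treat the two remaining terms separately. For the flux term, Lemma~\ref{lem:conservative_flux} applies verbatim: its hypotheses, namely the traditional SBP property of the averaged operator $D = \tfrac12(D_+ + D_-)$ (guaranteed by the Remark following \eqref{eq:upwind}) together with weak periodicity through \eqref{eq:periodic_SBP_SAT}, coincide with those assumed here, so that $\inp{\vec 1}{\vb F_i(\vb u, \vb f(\vb u), D)}_H = -\inp{D\vec 1}{\vb f_i}_H$. For the dissipation term I would transfer the operator onto $\vec 1$ using the periodic DP-SBP adjoint relation \eqref{eq:B4_periodic}, which yields $\inp{\vec 1}{D_- \vb g_i}_H = -\inp{D_+ \vec 1}{\vb g_i}_H$ directly and $\inp{\vec 1}{D_+ \vb g_i}_H = -\inp{D_- \vec 1}{\vb g_i}_H$ after swapping the two arguments and using symmetry of the real inner product; subtracting these gives $\inp{\vec 1}{(D_+ - D_-)\vb g_i}_H = \inp{(D_+ - D_-)\vec 1}{\vb g_i}_H$. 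Rearranging the tested equation then reproduces the asserted identity $\dv{}{t}\vb{\mathcal{U}}_{ih}(t) = \inp{D\vec 1}{\vb f_i}_H + \tfrac{\gamma_i}{2}\inp{(D_+ - D_-)\vec 1}{\vb g_i}_H$.

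Finally, I would show that both right-hand terms vanish because the operators annihilate constants. By the consistency Assumption (B.2) with $p \ge 0$ the stencils $D_\pm$ differentiate the degree-zero polynomial exactly, so $D_\pm \vec 1 = 0$; and the periodic penalty satisfies $B_N \vec 1 = 0$, read off immediately from \eqref{eq:A2_periodic} since $f_1 - f_N = 0$ when $\vec f = \vec 1$, so the penalized operators also kill constants. Hence $D\vec 1 = 0$ and $(D_+ - D_-)\vec 1 = 0$, giving $\dv{}{t}\vb{\mathcal{U}}_{ih}(t) = 0$ and therefore $\vb{\mathcal{U}}_{ih}(t) = \vb{\mathcal{U}}_{ih}(0)$ for all $t \ge 0$. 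I expect the only genuinely substantive work to lie inside Lemma~\ref{lem:conservative_flux}, which is assumed here: collapsing $\inp{\vec 1}{\vb F_i}_H$ — a mixture of conservative divergence terms and non-conservative split products — down to the single term $-\inp{D\vec 1}{\vb f_i}_H$ requires discrete integration by parts on each split piece so that the interior contributions telescope, exactly paralleling the continuous computation \eqref{eq:conservation_flux}. Within the theorem itself the only points needing care are the sign bookkeeping in the adjoint transfers and confirming that the periodic penalty does not spoil the annihilation of constants.
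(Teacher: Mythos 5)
Your proposal is correct and follows essentially the same route as the paper's proof: test \eqref{eq:skew_symm_upwind_SBP_SAT} componentwise against $\vec 1$ in the $H$-inner product, invoke Lemma~\ref{lem:conservative_flux} for the flux term, transfer $D_{+}-D_{-}$ onto $\vec 1$ via the periodic DP-SBP relations, and conclude since the (penalized) operators annihilate constants. You merely make explicit two steps the paper leaves inside its underbraces — the adjoint sign bookkeeping and the verification that $B_N\vec 1 = 0$ so the penalty preserves exactness on constants — which is a faithful elaboration rather than a different argument.
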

%%%%%
%%%%%
\begin{proof}
%See \cite{}.
   Consider
   {\small
        \begin{align} 
            \underbrace{\inp{\vec 1}{  \partial_t\vb{u}_i}_H}_{\dv{}{t} \vb{\mathcal{U}}_{ih}(t) = \dv{}{t}\inp{\vec 1}{ \vb{u}_i}_H}+
            \underbrace{\inp{\vec 1}{ \vb{F}_i(\vb u, \vb{f}(\vb{u}),  D)}_H}_{-\inp{\vec{D} \vec 1}{  \vb f_i}_H = 0} =  \frac {\alpha_i}{2} \inp{ \left(\vec{B}_{I}+\vec{B}_{n}\right)\mathbf{1}}{\mathbf{g}_i} + \frac {1}{2} \inp{ \left(\vec{D}_+- \vec{D}_-\right)\mathbf{1}}{(\Gamma\mathbf{g})_i}_H =0.
        \end{align}
        }
        Then we have
        %\begin{align} 
        $
            \dv{}{t} \vb{\mathcal{U}}_{ih}(t)
             =  0 \implies \vb{\mathcal{U}}_{ih}(t) = \vb{\mathcal{U}}_{ih}(0), \quad \forall \, t \in [0, T].
             $
        %\end{align}
% which  yields a conservative scheme.
 %%  
\end{proof}
%%%%%
%%%%%
Thus, the semi-discrete upwind DP-SBP method \eqref{eq:skew_symm_upwind_SBP_SAT-multi-block} is conservative and entropy stable, for arbitrary nonlinear  conservation laws in skew-symmetric form. 
\begin{remark}
   As noted earlier, a DG-type local numerical conservation property also holds. This can be demonstrated by restricting the discrete scalar products to one element and then utilizing the SBP property to convert the volume terms into surface terms only. When contributions from all elements are summed, these surface terms cancel out, thereby reaffirming the global conservation property.
\end{remark}
In the Appendix, namely sections \ref{sec:Burgers-equation-numerical-method}, \ref{sec:swe-equation-numerical-method}, \ref{sec:euler-equation-numerical-method}, we  give a few examples in one space dimension by considering the inviscid Burger's equation, the nonlinear shallow water equation and the compressible Euler equations. By using tensor products the analysis extends to higher spatial dimensions, $2$D and $3$D.
Below, we provide a few remarks to compare and contrast the proposed method \eqref{eq:skew_symm_upwind_SBP_SAT-multi-block} with traditional and state-of-the-art methods.

\begin{remark}
The primary difference between the proposed method \eqref{eq:skew_symm_upwind_SBP_SAT-multi-block} and both traditional and some recent  approaches in the literature, such as \cite{Sjögreen2019,GERRITSEN1996245,FISHER2013353}, lies in the second term on the right-hand side of \eqref{eq:skew_symm_upwind_SBP_SAT-multi-block}, which acts to control grid-scale oscillations. For smooth solutions, where $\vb{u} \in C^1\left(\Omega \times [0,T]\right)$, the terms on the right-hand side of \eqref{eq:skew_symm_upwind_SBP_SAT-multi-block} are proportional to $O(\Delta{x}^{p +1})$ and are bounded by the truncation error $O(\Delta{x}^{p})$. This ensures they remain small for smooth solutions on well-resolved meshes. However, for unresolved meshes and non-smooth solutions, where $\vb{u} \notin C^1\left(\Omega \times [0,T]\right)$, the terms on the right-hand side of \eqref{eq:skew_symm_upwind_SBP_SAT-multi-block} can become significant and will dissipate entropy when $\Gamma^k + (\Gamma^{k})^T >0$.
\end{remark}
\begin{remark}
We elaborate on the efficiency of the proposed method \eqref{eq:skew_symm_upwind_SBP_SAT-multi-block}.  It is  particularly noteworthy that the discrete upwind operator comprises the difference of first derivative DP SBP operators, which are computationally efficient to evaluate. This contrasts with traditional and state-of-the-art artificial hyper-viscosity operators \cite{Szepessy1989,STIERNSTROM2021110100,Nordström2006,Mattsson_etal2004,MattssonandRider2015}, which require the evaluation of high even-order spatial derivatives $\sim \partial^{2q}/\partial x^{2q}$, $q \ge  1$. Such operators can be computationally expensive, may compromise the conservation of important invariants, and often further restrict explicit time-step sizes for the fully discrete problem.
\end{remark}
\begin{remark}
The volume upwind term on the right-hand side of \eqref{eq:skew_symm_upwind_SBP_SAT-multi-block} acts as a penalty on the discrete gradient of the solution on the grid. This penalty weakly enforces smoothness and serves as a measure of unresolved features in the numerical solution. For smooth solutions, where $\vb{u} \in C^1\left(\Omega \times [0,T]\right)$, this term is small and tends to vanish as the mesh is refined. Conversely, for unresolved meshes and non-smooth solutions, where $\vb{u} \notin C^1\left(\Omega \times [0,T]\right)$, the term can be significant and  suppresses grid-scale errors. 
Thus, the proposed numerical framework inherently incorporates a "filter" that is accurate up to discretization errors. Its purpose is to identify and  resolve regions where the solution is poorly captured or contains discontinuities. Numerical experiments presented later in this paper support the analysis.

\end{remark}
\begin{remark}
The proposed DP SBP FD/DG  method
 \eqref{eq:skew_symm_upwind_SBP_SAT-multi-block} is structurally equivalent to entropy-stable multi-block SBP FD/DG methods develop in the literature, such as \cite{Carpenter_et_al2014,fluxo_SplitForm,gassner2016skewsym_swe,CHAN2018346,GassnerWinters2021,Ranocha2018,giraldo2002nodal}. In particular when  $\Gamma =0$, the method reduces the so-called entropy DG schemes which are predominantly available in the literature. However, the DP SBP operators are designed to be upwind, that is they come with
    some built-in dissipation everywhere, as opposed to standard DG methods for hyperbolic PDEs which can only induce dissipation through numerical fluxes acting at element interfaces. 
\end{remark}

\section{Numerical Results} \label{sec:numerical-experiments}
In this section, we present numerical experiments to validate the theoretical analysis outlined in the previous sections. These experiments are designed to assess the accuracy and stability of the numerical method. In particular, we will examine the robustness of the method and the improvements it offers over standard approaches. No artificial viscosity, limiting, or filtering is employed in any of the numerical experiments presented herein. The numerical methods are implemented in the Julia programming language~\cite{Julia-2017}. We utilize the DP FD operators~\cite{MATTSSON2017upwindsbp} and the DRP DP FD operators~\cite{williams2024drp} provided in the Julia package \emph{SummationByPartsOperators.jl}~\cite{ranocha2021sbp}. Our DG schemes employ the DP DG operators developed by Glaubitz et al.~\cite{GLAUBITZ2025113841} on LGL nodes, with the parameter \(\lambda_n = -0.1\).

For time integration, we will employ the explicit five-stage, fourth-order accurate Strong Stability Preserving Runge-Kutta (SSP RK) method as described in \cite{Ruuth2006_SSPRK} as well as the four stage third-order
accurate adaptive SSP RK method of Kraaijevanger~\cite{Kraaijevanger1991} with
the error estimator of Fekete et al.~\cite{Fekete2022} and the step size
controller and implementation by Ranocha et al.~\cite{Ranocha2021SSPRK}. Both time-stepping methods are  provided by the Julia package \emph{OrdinaryDiffEq.jl}~\cite{DifferentialEquations.jl-2017}. All figures in the subsequent numerical experiments are generated using the Julia package \emph{Makie.jl}~\cite{DanischKrumbiegel2021} or \emph{ParaView}~\cite{Ahrens2005ParaViewAE}. The simulation code is publicly available at \url{https://github.com/Dougal-s/paper-2024-DP-SBP-nonlinear}, ensuring reproducibility for interested readers. %

The numerical experiments are conducted with increasing complexity. We begin with the 1D Burgers' equation, followed by the 1D, then 2D, compressible Euler equations of gas dynamics. Detailed numerical experiments for the 1D and 2D SWEs are documented in the Appendix \ref{swe-num-experiments}. 
Throughout, we compare the linearly stable semi-discrete approximation \eqref{eq:semi-discrete-scalar-multi-block}, utilizing local Lax-Friedrichs flux splitting with volume upwinding $\Gamma >0$, with the standard entropy-stable multi-block SBP FD scheme or DGSEM \eqref{eq:skew_symm_upwind_SBP_SAT-multi-block} without volume upwinding $\Gamma =0$, against the entropy-stable DP DG/FD scheme \eqref{eq:skew_symm_upwind_SBP_SAT-multi-block} with volume upwinding for $\Gamma >0$.

\subsection{Inviscid Burgers' Equation}
We begin with the 1D inviscid Burgers' equation, with the PDE $f(u) = u^2/2$,  and verify numerical accuracy and robustness.

To verify accuracy, we force Burgers' equation to satisfy the exact solution
$u = 2 + 0.3 \fn{\sin}{2\pi(x - t)}$ on the spatial domain $x \in [-1, 1]$.
We use a fixed timestep of \(\Delta t = 0.01\Delta x\) and run the simulation on a sequence of varying discretization parameters  until the final time $t=2$. The $l^2$ errors and the convergence rates of the errors are shown in \cref{fig:burgers-MMS-convergence}. The observed numerical convergence rates are slightly better than the theoretical rates. 
\begin{figure}[thbp]
    \centering
    \begin{subcaptionblock}{0.47\textwidth}
        \centering
        \includegraphics[scale=\figurescaling]{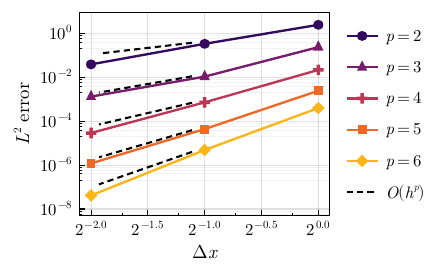}
        \caption{ DP DG method.}
    \end{subcaptionblock}\hfill
    \begin{subcaptionblock}{0.52\textwidth}
        \centering
        \includegraphics[scale=\figurescaling]{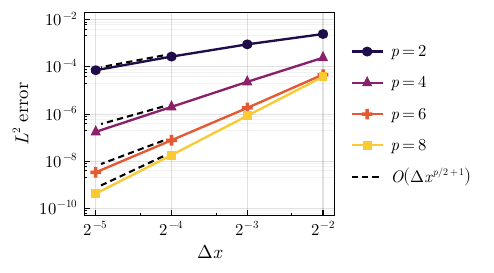}
        \caption{DP FD method on 4 elements.}
    \end{subcaptionblock}
    %\vspace{-0.5cm}
    \caption{Numerical errors and convergence of errors for the Burgers' equation.}
    \label{fig:burgers-MMS-convergence}
    %\vspace{-0.5cm}
\end{figure}

To investigate the robustness of the schemes, we consider the  initial condition
%\small
%\begin{align*}
%
    $u(x,0) = \exp[-(10x - 3)^2]$
    %\quad
    on the spatial domain $x \in [0, 1]$.
%\end{align*}
%\vspace{-0.5cm}
%%%
For the DG scheme we vary the number of elements $4\le K\le 16$ and the polynomial degree $3\le p\le 7$. We consider high-order accurate DP FD operators of interior order of accuracy $7,8,9$ discretized with $8$ elements and vary the number of nodes within an element, $17\le n\le 65$.
We use the fixed timestep of \(\Delta t = 0.01\Delta x\) and set the final time $t = 10$.
Snapshots of the solution for the DP FD and DP DG methods on a given mesh are shown in \Cref{fig:burgers-gaussian-state}.
\begin{figure}[htb!]
    \centering
    \includegraphics[scale=\figurescaling]{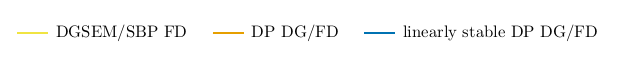}
    \\[-1em]
    \begin{subcaptionblock}{\textwidth}
        \centering
        \includegraphics[scale=\figurescaling]{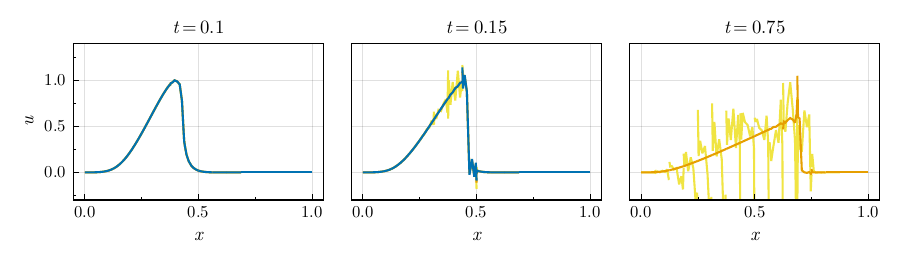}
        \\[-1.0em]
        \caption{DG methods with degree 8 polynomials on 16~elements.}
    \end{subcaptionblock}
    %\vspace{-1cm}
    \begin{subcaptionblock}{\textwidth}
        \centering
        \includegraphics[scale=\figurescaling]{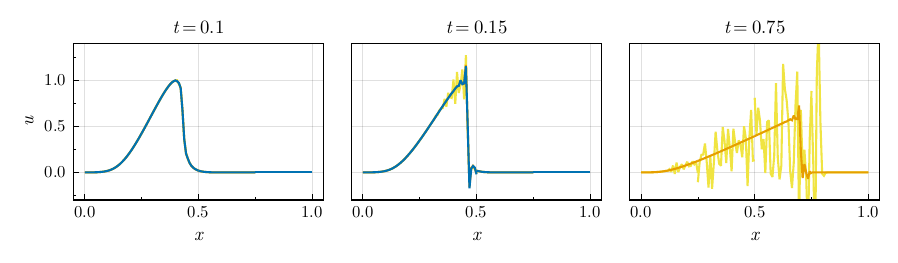}
        \\[-1.0em]
        \caption{8th order FD methods on 4~elements with 33~nodes for each element.}
    \end{subcaptionblock}
    \\[-0.5em]
    \caption{
    Snapshots of the solution to the Burger's equation for a Gaussian initial condition.}
    \label{fig:burgers-gaussian-state}
    %\vspace{-0.5cm}
\end{figure}
Although the initial datum is smooth, a right-going shock forms at $t \approx 0.1$. As time progresses, numerical oscillations emerge from the shock front. By $t=0.2$, the linearly stable scheme (Lax-Friedrich flux splitting) breaks down. In contrast, the entropy-stable DGSEM/SBP schemes (without interface upwinding and $\Gamma=0$) and our DP SBP FD/DG schemes (with interface upwinding and $\Gamma >0$)) remain stable, producing bounded numerical solutions.
\Cref{tab:burgers-gaussian-crash-times-fd,tab:burgers-gaussian-crash-times-dg} document the crash or final times of various schemes across different orders of accuracy and mesh resolutions. Notably, the DGSEM/SBP schemes with $\gamma =0$ generate severe numerical oscillations that corrupt the solution throughout the domain. Importantly, our DP SBP FD/DG schemes (with $\gamma >0$) effectively suppress these oscillations while maintaining numerical stability.
In \Cref{fig:burgers-gaussian-conservation}, we plot the relative change over time of the total entropy, \(E_h(t) =\inp{1}{\vec u^2/2}_H\), and the total "mass," \(\mathcal{U}(t)=\inp{1}{\vec u}_H\). These verify entropy stability and demonstrate the conservative properties of the numerical schemes.
%
% Despite the initial data being sufficiently smooth, a right going shock forms at $t = 0.1$. As time increases, numerical oscillations are generated from the shock front. At $t=0.2$, the linearly stable (Lax-Friederich flux splitting with $\gamma >0$) crashes. The  numerical solutions of the entropy stable DGSEM/SBP schemes (with interface upwinding and $\gamma =0 $) and our DP SBP FD/DG schemes (with interface upwinding and $\gamma >0 $) are stable with bounded numerical solutions.
% \Cref{tab:burgers-gaussian-crash-times-fd,tab:burgers-gaussian-crash-times-dg} document the crash/final times of the various  schemes for different order of accuracy and mesh resolutions. We note, however, the DGSEM/SBP with $\gamma =0 $ schemes generate wild numerical oscillations which corrupts the solutions everywhere. It is significantly important to note that our DP SBP FD/DG schemes (with $\gamma >0 $) tames the wild oscillations while ensuring numerical stability. 
% %%
% In \Cref{fig:burgers-gaussian-conservation} we plot the relative change with time of the total  entropy, \(E_h(t) =\inp{1}{\vec u^2/2}_H\) and total "mass", \(\mathcal{U}(t)=\inp{1}{\vec u}_H\) which verify entropy stability and demonstrate the conservative properties of the numerical schemes.

% Timestepping:
%     t ∈ [0, 10]
%     5-stage 4-th order fixed timestep SSPRK
%     Δt = 10⁻² × Δx where Δx = minᵢ (xᵢ₊₁ - xᵢ)

\begin{table}[htb!]
    \centering
    \scalebox{\tablescaling}{
        \begin{tabular}{@{}r*{3}{@{}c@{}*{5}{r}@{}}}
	\toprule
    &
	  & \multicolumn{5}{c}{linearly stable DG} &
	  & \multicolumn{5}{c}{DGSEM} &
	  & \multicolumn{5}{c}{DP DG}
      \\
	\cmidrule{3-7}
	\cmidrule{9-13}
	\cmidrule{15-19}
    & \hspace{2.5em}
	  & \multicolumn{5}{c}{polynomial degree} & \hspace{2em}
	  & \multicolumn{5}{c}{polynomial degree} & \hspace{2em}
	  & \multicolumn{5}{c}{polynomial degree} \\
	K &
    & 3 & 4 & 5 & 6 & 7 &
    & 3 & 4 & 5 & 6 & 7 &
    & 3 & 4 & 5 & 6 & 7 \\
	\midrule
	4 &
    & \crash{7.96} & \crash{2.08} & \crash{1.07} & \crash{0.82} & \crash{0.57} &
	& \nocrash{10} & \nocrash{10} & \nocrash{10} & \nocrash{10} & \nocrash{10} &
	& \nocrash{10} & \nocrash{10} & \nocrash{10} & \nocrash{10} & \nocrash{10} \\
	8 &
    & \nocrash{10} & \crash{3.07} & \crash{1.06} & \crash{0.74} & \crash{0.37} &
	& \nocrash{10} & \nocrash{10} & \nocrash{10} & \nocrash{10} & \nocrash{10} &
	& \nocrash{10} & \nocrash{10} & \nocrash{10} & \nocrash{10} & \nocrash{10} \\
	12 &
    & \nocrash{10} & \nocrash{10} & \nocrash{10} & \crash{0.90} & \crash{0.39} &
	& \nocrash{10} & \nocrash{10} & \nocrash{10} & \nocrash{10} & \nocrash{10} &
	& \nocrash{10} & \nocrash{10} & \nocrash{10} & \nocrash{10} & \nocrash{10} \\
	16 &
    & \nocrash{10} & \nocrash{10} & \nocrash{10} & \nocrash{10} & \crash{0.70} &
	& \nocrash{10} & \nocrash{10} & \nocrash{10} & \nocrash{10} & \nocrash{10} &
	& \nocrash{10} & \nocrash{10} & \nocrash{10} & \nocrash{10} & \nocrash{10} \\
	\bottomrule
\end{tabular}}
    \caption{Crash or final times of the DG simulations of the Burger's equation. The linearly stable DG with volume upwinding $\Gamma>0$ crashes for several configuration while the entropy stable DGSEM without volume upwinding $\Gamma=0$ and our DP DG with volume upwinding $\Gamma>0$ do not crash.}
    \label{tab:burgers-gaussian-crash-times-dg}
     \vspace{-0.5cm}
\end{table}
%\vspace{-0.75cm}
\begin{table}[htb!]
    \centering
    \scalebox{\tablescaling}{
        \begin{tabular}{@{}r*{3}{@{}c@{}*{3}{r}@{}}}
	\toprule
    &
	  & \multicolumn{3}{c}{linearly stable FD} &
	  & \multicolumn{3}{c}{SBP FD} &
	  & \multicolumn{3}{c}{DP FD}
      \\
	\cmidrule{3-5}
	\cmidrule{7-9}
	\cmidrule{11-13}
    & \hspace{2em}
	  & \multicolumn{3}{c}{accuracy order} & \hspace{2.5em}
	  & \multicolumn{3}{c}{accuracy order} & \hspace{2.5em}
	  & \multicolumn{3}{c}{accuracy order} \\
	n &
    & 7 & 8 & 9 &
    & 7 & 8 & 9 &
    & 7 & 8 & 9 \\
	\midrule
	17 &
	& \nocrash{10} & \crash{0.20} & \crash{0.19} &
	& \nocrash{10} & \nocrash{10} & \nocrash{10} &
	& \nocrash{10} & \nocrash{10} & \nocrash{10} \\
	33 &
	& \nocrash{10} & \crash{0.21} & \crash{0.21} &
	& \nocrash{10} & \nocrash{10} & \nocrash{10} &
	& \nocrash{10} & \nocrash{10} & \nocrash{10} \\
	49 &
	& \nocrash{10} & \crash{0.21} & \crash{0.21} &
	& \nocrash{10} & \nocrash{10} & \nocrash{10} &
	& \nocrash{10} & \nocrash{10} & \nocrash{10} \\
	65 &
    & \nocrash{10} & \crash{0.22} & \crash{0.22} &
	& \nocrash{10} & \nocrash{10} & \nocrash{10} &
	& \nocrash{10} & \nocrash{10} & \nocrash{10} \\
	\bottomrule
\end{tabular}}
    \caption{Crash or final times of the FD simulations of the Burger's equation. The linearly stable DG with volume upwinding $\Gamma>0$ crashes for several configuration while the entropy stable SBP FD without volume upwinding $\Gamma=0$ and our DP FD with volume upwinding $\Gamma>0$ do not crash.}
    \label{tab:burgers-gaussian-crash-times-fd}
     \vspace{-0.5cm}
\end{table}
\begin{figure}[htbp]
    \centering
    \includegraphics[scale=\figurescaling]{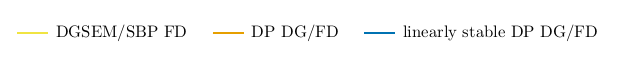} \\[-1em]
    \begin{subcaptionblock}{0.49\textwidth}
        \centering
        \includegraphics[scale=\figurescaling]{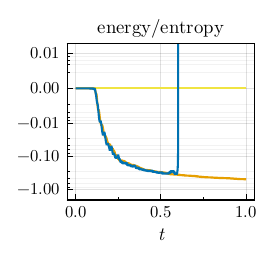}\hspace{-0.5em}%
        \includegraphics[scale=\figurescaling]{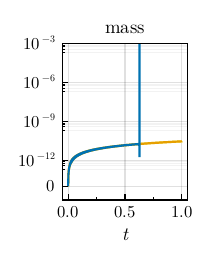}
        \\[-1em]
        \caption{DG with degree 8 polynomials on 16 elements.}
    \end{subcaptionblock}\hfill%
    \begin{subcaptionblock}{0.49\textwidth}
        \centering
        \includegraphics[scale=\figurescaling]{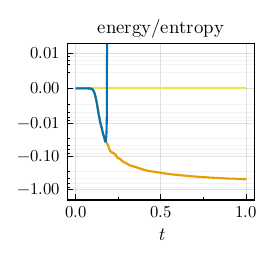}\hspace{-0.5em}%
        \includegraphics[scale=\figurescaling]{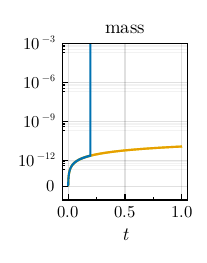}
        \\[-1em]
        \caption{8th order FD on 4 elements with 33 nodes each.}
    \end{subcaptionblock}%
   % \vspace{-0.5cm}
    \caption{Relative change in the total entropy  and total mass  for the Burger's equation.}
    \label{fig:burgers-gaussian-conservation}
    % \vspace{-0.5cm}
\end{figure}
\subsection{Compressible Euler Equations}
Finally, we consider the  Euler equations of compressible gas dynamics in 1D and 2D. We will investigate robustness and verify accuracy. For the linearly stable DP DG/FD schemes we will consider both local Lax-Friedrichs and van Leer-H\"anel flux splitting.

\subsubsection{Sod Shock Tube Problem}
We consider the Sod shock tube problem~\cite{Sod1978} with the initial conditions given by
{\small
\begin{align*}
    \varrho_0 = \begin{cases}
        1 & \text{if } x < 0 \\
        \frac 1 8 & \text{if } x \ge 0
    \end{cases},\quad
    u_0 = 0,\quad
    p_0 = \begin{cases}
        1 & \text{if } x < 0 \\
        \frac{1}{10} & \text{if } x \ge 0
    \end{cases},\quad
    x \in [-6,6],\quad 
    t \in [0,2].
\end{align*}
}
Again, for the DG scheme we vary the number of elements $32\le K\le 128$ and the degree of the polynomial approximation $3\le p\le 6$. We also consider the  DP FD difference operators with interior orders of accuracy from $5$ to $9$  discretized with $16$ elements and vary the number of nodes $17\le n\le 65$ within an element.
We use a fixed timestep of \(\Delta t = 0.002 \Delta x\) and run the simulation until the final time, $t=2$. Snapshots of the mass density $\rho$, momentum density $\rho v$, and energy density $\rho e$ are shown in \cref{fig:ce-sod-shock-snaphot} at $t=1.5$. 
%%%
%%%
\begin{figure}[htbp]
    \centering
    \includegraphics[scale=\figurescaling]{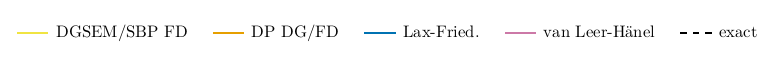} \\[-0.5em]
    \begin{subcaptionblock}{0.48\linewidth}
        \centering
        \includegraphics[scale=\figurescaling]{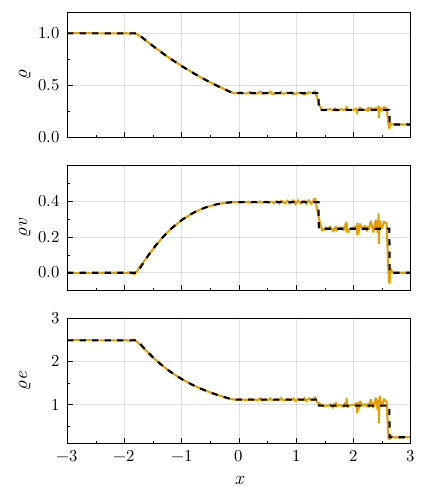}
        \caption{DG methods using degree 6 polynomials on 64 elements.}
    \end{subcaptionblock}\hfill
    \begin{subcaptionblock}{0.48\linewidth}
        \centering
        \includegraphics[scale=\figurescaling]{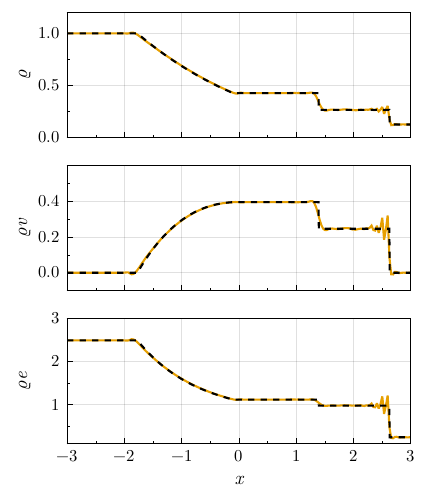}
        \caption{6th order FD methods on 12 elements with 33 nodes each.}
    \end{subcaptionblock}
    %\vspace{-0.5cm}
    \caption{Snapshots of the conserved variables at \(t=1.5\) for the Sod shock tube problem.}
    \label{fig:ce-sod-shock-snaphot}
  %  \vspace{-0.5cm}
\end{figure}
%%%
%%%
In Figure \ref{fig:ce-sod-shock-conservation} we have plotted the relative changes in total mass, total momentum, total energy and total entropy. Note that   our DP SBP FD/DG schemes (with volume upwinding $\Gamma>0$) are stable and completed the simulation until the final time without crashing.  However all other schemes, including the linearly stable schemes (with volume upwinding $\Gamma>0$) and the standard SBP/DGSEM (without volume upwinding $\Gamma=0$) crashed before the final simulation time.
%%%
%\vspace{-0.5cm}
\begin{figure}[htbp]
    \centering
    \includegraphics[scale=\figurescaling]{images/comp-euler-1D/sod-shock-tube/N_7_deriv_order_6_deriv_type_GlaubitzEtal2024_01_nblocks_64/legend-entr-horizontal.pdf} \\[-1em]
    \begin{subcaptionblock}{\textwidth}
        \centering
        \includegraphics[scale=\figurescaling]{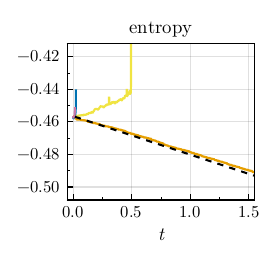}%
        \includegraphics[scale=\figurescaling]{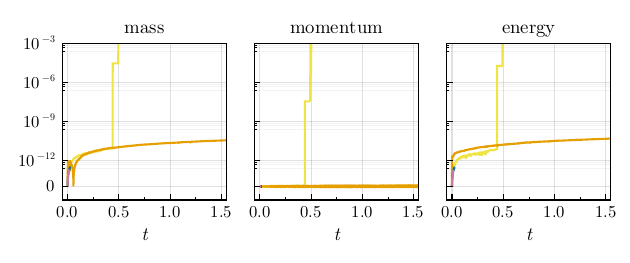}
     %   \vspace{-0.2cm}
        \caption{DG methods using 6th order polynomials on 64 elements.}
    \end{subcaptionblock}
    \begin{subcaptionblock}{\textwidth}
        \centering
        \includegraphics[scale=\figurescaling]{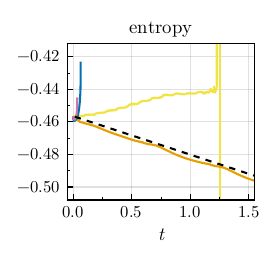}%
        \includegraphics[scale=\figurescaling]{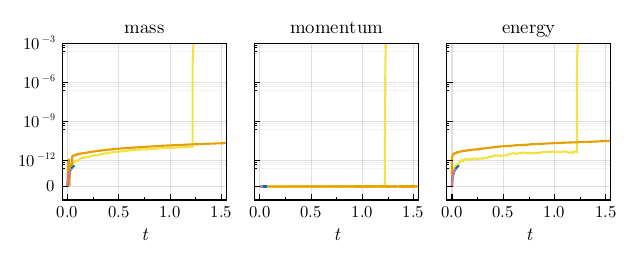}
      %  \vspace{-0.2cm}
        \caption{6th order FD methods on 12 elements with 33 nodes each.}
    \end{subcaptionblock}
   % \vspace{-0.85cm}
    \caption{The change over time in the total entropy and the relative change over time in the total mass, momentum, and energy for the Sod shock tube problem.}
    \label{fig:ce-sod-shock-conservation}
 %   \vspace{-0.25cm}
\end{figure}
%%%
 In addition, our entropy stable DP SBP FD/DG schemes (with  $\Gamma>0$) are conservative and entropy consistent, as shown in \cref{fig:ce-sod-shock-conservation}.
%We use a fixed timestep of \(\Delta t = 0.002\Delta x\).
As before, we have also performed numerical simulations with different discretisation configurations. The crash or final  times are reported in  \cref{tab:ce-sod-shock-crash-times-fd,tab:ce-sod-shock-crash-times-dg}. It is also important to reiterate that our entropy stable DP SBP FD/DG schemes with volume upwinding ($\Gamma >0 $) run until the final time, $t=2$, for all configurations without crashing. These again demonstrate the robustness and efficacy of  our entropy stable DP SBP FD/DG schemes (with $\Gamma >0 $).
%
% Crash times for varying orders and mesh resolutions can be found in \cref{tab:ce-sod-shock-crash-times-fd,tab:ce-sod-shock-crash-times-dg} for DP FD and DP DG methods respectively.
%
% \Cref{fig:ce-sod-shock-snapshot} contains snapshots of the solution at time \(t=1\) for the DP FD and DP DG methods respectively.
%
% \Cref{fig:ce-sod-shock-conservation,fig:ce-sod-shock-entropy} show the change in the conserved quantities and the entropy with respect to time.
%
\begin{table}[htbp]
    \centering
    % \begin{subcaptionblock}{0.49\textwidth}
    %     \centering
    %     \scalebox{\tablescaling}{
    %     \input{data/comp-euler-1D/sod-shock-tube/crash-times-Mattsson2017-LxF}
    %     }
    %     \caption{Lax-Friedrichs}
    % \end{subcaptionblock}
    % \begin{subcaptionblock}{0.49\textwidth}
    %     \centering
    %     \scalebox{\tablescaling}{
    %     \input{data/comp-euler-1D/sod-shock-tube/crash-times-Mattsson2017-vLH}
    %     }
    %     \caption{van Leer-H\"anel}
    % \end{subcaptionblock}
    % \begin{subcaptionblock}{0.49\textwidth}
    %     \centering
    %     \scalebox{\tablescaling}{
    %     \input{data/comp-euler-1D/sod-shock-tube/crash-times-Mattsson2017-Ns}
    %     }
    %     \caption{entropy conservative}
    % \end{subcaptionblock}
    % \begin{subcaptionblock}{0.49\textwidth}
    %     \centering
    %     \scalebox{\tablescaling}{
    %     \input{data/comp-euler-1D/sod-shock-tube/crash-times-Mattsson2017-UNs}
    %     }
    %     \caption{entropy stable}
    % \end{subcaptionblock}
    \scalebox{\tablescaling}{
        \begin{tabular}{@{}r@{}*{4}{c@{}*{4}{r}@{}}}
	\toprule
	& \hspace{2em}
	& \multicolumn{4}{c}{Lax-Friedrichs} & \hspace{2.5em}
    & \multicolumn{4}{c}{van Leer-H\"anel} & \hspace{2.5em}
    & \multicolumn{4}{c}{SBP FD} & \hspace{2.5em}
    & \multicolumn{4}{c}{DP FD} \\
	\cmidrule{3-6}
	\cmidrule{8-11}
	\cmidrule{13-16}
	\cmidrule{18-21}
	&
    & \multicolumn{4}{c}{accuracy order} &
    & \multicolumn{4}{c}{accuracy order} &
    & \multicolumn{4}{c}{accuracy order} &
    & \multicolumn{4}{c}{accuracy order} \\
	$n$ &
    & 6 & 7 & 8 & 9 &
    & 6 & 7 & 8 & 9 &
    & 6 & 7 & 8 & 9 &
    & 6 & 7 & 8 & 9 \\
	\midrule
	17 &
    & % LxF
    \crash{0.04} & \crash{0.04} & \crash{0.04} & \crash{0.04} &
    & % vLH
    \crash{0.01} & \crash{0.01} & \crash{0.01} & \crash{0.01} &
	& % Ns
    \nocrash{2} & \nocrash{2} & \crash{1.25} & \crash{0.79} &
	& % UNs
    \nocrash{2} & \nocrash{2} & \nocrash{2} & \nocrash{2} \\
	33 &
    & % LxF
    \crash{0.02} & \crash{0.02} & \crash{0.02} & \crash{0.02} &
    & % vLH
    \crash{0.01} & \crash{0.01} & \crash{0.00} & \crash{0.00} &
	& % Ns
    \crash{0.93} & \crash{1.05} & \crash{0.67} & \crash{0.65} &
	& % UNs
    \nocrash{2} & \nocrash{2} & \nocrash{2} & \nocrash{2} \\
	65 &
    & % LxF
	\crash{0.01} & \crash{0.01} & \crash{0.01} & \crash{0.01} &
    & % vLH
    \crash{0.00} & \crash{0.00} & \crash{0.00} & \crash{0.00} &
	& % Ns
    \crash{0.53} & \crash{1.33} & \crash{0.29} & \crash{0.22} &
	& % UNs
    \nocrash{2} & \nocrash{2} & \nocrash{2} & \nocrash{2} \\
	\bottomrule
\end{tabular}

    }
    \caption{
        Crash or final simulation times for  FD methods with the $16$ elements for the Sod shock tube problem.
    }
    \label{tab:ce-sod-shock-crash-times-fd}
    \vspace{-0.5cm}
\end{table}

%\todo[inline]{We need to be consistent with legend DGSEM/SBP FD}

\begin{table}[htbp]
    \centering
    % \begin{subcaptionblock}{0.49\textwidth}
    %     \centering
    %     \scalebox{\tablescaling}{
    %         \input{data/comp-euler-1D/sod-shock-tube/crash-times-GlaubitzEtal2024-LxF}
    %     }
    %     \caption{Lax-Friedrichs}
    % \end{subcaptionblock}
    % \begin{subcaptionblock}{0.49\textwidth}
    %     \centering
    %     \scalebox{\tablescaling}{
    %         \input{data/comp-euler-1D/sod-shock-tube/crash-times-GlaubitzEtal2024-vLH}
    %     }
    %     \caption{van Leer-H\"anel}
    % \end{subcaptionblock}
    % \begin{subcaptionblock}{0.49\textwidth}
    %     \centering
    %     \scalebox{\tablescaling}{
    %         \input{data/comp-euler-1D/sod-shock-tube/crash-times-GlaubitzEtal2024-Ns}
    %     }
    %     \caption{entropy conservative}
    % \end{subcaptionblock}
    % \begin{subcaptionblock}{0.49\textwidth}
    %     \centering
    %     \scalebox{\tablescaling}{
    %         \input{data/comp-euler-1D/sod-shock-tube/crash-times-GlaubitzEtal2024-UNs}
    %     }
    %     \caption{entropy stable}
    % \end{subcaptionblock}
    \scalebox{\tablescaling}{
        \begin{tabular}{@{}r@{}*{4}{c@{}*{4}{r}@{}}}
    \toprule
    & \hspace{2em}
    & \multicolumn{4}{c}{Lax-Friedrichs} &\hspace{2.5em}
    & \multicolumn{4}{c}{van Leer-H\"anel} &\hspace{2.5em}
    & \multicolumn{4}{c}{DGSEM} &\hspace{2.5em}
    & \multicolumn{4}{c}{DP DG} \\
    \cmidrule{3-6}
    \cmidrule{8-11}
    \cmidrule{13-16}
    \cmidrule{18-21}
    &
    & \multicolumn{4}{c}{polynomial degree} &
    & \multicolumn{4}{c}{polynomial degree} &
    & \multicolumn{4}{c}{polynomial degree} &
    & \multicolumn{4}{c}{polynomial degree} \\
    K &
    & 3 & 4 & 5 & 6 &
    & 3 & 4 & 5 & 6 &
    & 3 & 4 & 5 & 6 &
    & 3 & 4 & 5 & 6 \\
    \midrule
    32 &
    % LxF
    & \crash{0.10} & \crash{0.06} & \crash{0.04} & \crash{0.03} &
    % vLH
    & \crash{0.02} & \crash{0.01} & \crash{0.01} & \crash{0.01} &
    % Ns
    & \nocrash{2} & \crash{0.97} & \crash{0.73} & \crash{0.99} &
    % UNs
    & \nocrash{2} & \nocrash{2} & \nocrash{2} & \nocrash{2} \\
    64 &
    % LxF
    & \crash{0.05} & \crash{0.03} & \crash{0.02} & \crash{0.01} &
    % vLH
    & \crash{0.01} & \crash{0.01} & \crash{0.00} & \crash{0.00} &
    % Ns
    & \crash{1.55} & \crash{0.45} & \crash{0.35} & \crash{0.36} &
    % UNs
    & \nocrash{2} & \nocrash{2} & \nocrash{2} & \nocrash{2} \\
    128 &
    % LxF
    & \crash{0.02} & \crash{0.01} & \crash{0.01} & \crash{0.01} &
    % vLH
    & \crash{0.01} & \crash{0.00} & \crash{0.00} & \crash{0.00} &
    % Ns
    & \crash{0.76} & \crash{0.22} & \crash{0.18} & \crash{0.18} &
    % UNs
    & \nocrash{2} & \nocrash{2} & \nocrash{2} & \nocrash{2} \\
    \bottomrule
\end{tabular}

    }

    \caption{
         Crash or final simulation times for the DG methods for the Sod shock tube problem.
    }
    \label{tab:ce-sod-shock-crash-times-dg}
    \vspace{-0.5cm}
\end{table}
%\vspace{-0.25cm}
% \begin{figure}[htbp]
%     \centering
%     \includegraphics[scale=\figurescaling]{images/comp-euler-1D/sod-shock-tube/N=7_deriv_order=6_deriv_type=GlaubitzEtal2024(-0.1)_nblocks=64_LwTp9jMbsIe/legend-entr-horizontal.pdf}
%     \begin{subcaptionblock}{0.49\textwidth}
%         \centering
%         \includegraphics[scale=\figurescaling]{images/comp-euler-1D/sod-shock-tube/N=33_deriv_order=7_deriv_type=Mattsson2017_nblocks=12_5K32JxnDn1a/entr-wide-Thermodynamic Entropy.pdf}
%     \end{subcaptionblock}
%     \begin{subcaptionblock}{0.49\textwidth}
%         \centering
%         \includegraphics[scale=\figurescaling]{images/comp-euler-1D/sod-shock-tube/N=7_deriv_order=6_deriv_type=GlaubitzEtal2024(-0.1)_nblocks=64_LwTp9jMbsIe/entr-Thermodynamic Entropy.pdf}
%     \end{subcaptionblock}
%     \caption{Plots of the total thermodynamic entropy for the 7th order DP FD methods on 12 blocks with 33 nodes per block (left) and DP DG methods using 6th order polynomials on 64 elements.}
%     \label{fig:ce-sod-shock-entropy}
% \end{figure}
\subsubsection{Isentropic Vortex}
To verify accuracy, we consider the isentropic vortex test problem~\cite{Shu1998} modeled by the 2D compressible Euler equation. Specifically, we use the initial conditions as given by~\cite{ranocha2023highorder}
{\small
\begin{align*}
&\varrho_0 = \bar{\varrho}_0(T/\bar{T}_0)^{1/(\gamma-1)},\quad
    \begin{bmatrix}
        u_0 \\ v_0
    \end{bmatrix} = \begin{bmatrix}\bar{u}_0\\\bar{v}_0\end{bmatrix} + \frac{\varepsilon}{2\pi}\exp((1-r^2)/2)\begin{bmatrix}
        -y \\ x
    \end{bmatrix}, \\
   & p_0 = \bar{T}_0\bar{\varrho}_0, \quad T = \bar{T}_0-\frac{(\gamma-1)\varepsilon^2}{8\gamma \pi^2}\exp(1-r^2),
\end{align*}    
}%
% \vspace{-0.5cm}
% %%%
% \noindent
where $T$
% \[
%     T = \bar{T}_0-\frac{(\gamma-1)\varepsilon^2}{8\gamma \pi^2}\exp(1-r^2),
% \]
is the temperature, $\varepsilon = 10$ is the vortex strength, $r=\sqrt{x^2 + y^2}$, $\bar{\varrho}_0 = 1$ is the background density, $ \bar{u}_0 = \bar{v}_0 = 1$ is the background velocity, $p_0 = 10$ is the background pressure, $\gamma = 7/5$, and $\bar{T}_0 = {p}_0/\bar{\varrho}_0$ the background temperature, with the spatial domain $[-8,8]^2$ and periodic boundary conditions.
We  run the simulation on a sequence of varying discretization parameters  until the final time $t=16$, with time-step of $\Delta t = 0.04 \Delta x$. The $l^2$ errors and the convergence rates of the errors are shown in \cref{fig:ce-isentropic-vortex-convergence}. Note that the numerical convergence rates are slightly better than the theoretical rates. 
% Timestepping:
%     t ∈ [0, 16]
%     5-stage 4-th order fixed timestep SSPRK
%     Δt = 10⁻² × Δx where Δx = minᵢ (xᵢ₊₁ - xᵢ)

\begin{figure}[htbp]
    \centering
    \begin{subcaptionblock}{0.47\textwidth}
        \centering
        \includegraphics[scale=\figurescaling]{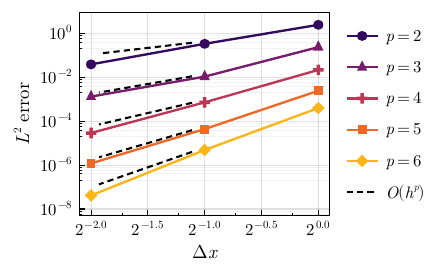}
        \caption{DP DG method.}
    \end{subcaptionblock}\hfill
    \begin{subcaptionblock}{0.52\textwidth}
        \centering
        \includegraphics[scale=\figurescaling]{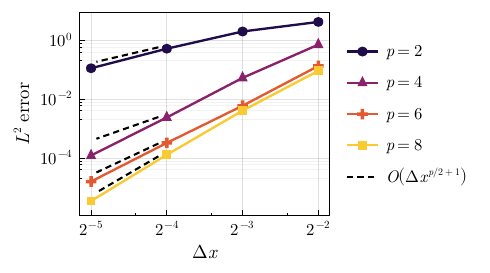}
        \caption{4 element DP FD method.}
    \end{subcaptionblock}
    %\vspace{-0.5cm}
    \caption{Numerical errors and convergence of errors for the  isentropic vortex problem.}
    \label{fig:ce-isentropic-vortex-convergence}
   % \vspace{-0.5cm}
\end{figure}
% \begin{table}[htb]
%     \begin{subcaptionblock}{\textwidth}
%         \centering
%         \scalebox{\tablescaling}{
%         \input{data/comp-euler-2D/isentropic-vortex/convergence-deriv_order=4-LlfdvcsfQOb}
%         }
%         \caption{4-th order accurate}
%     \end{subcaptionblock}
%     \begin{subcaptionblock}{\textwidth}
%         \centering
%         \scalebox{\tablescaling}{
%         \input{data/comp-euler-2D/isentropic-vortex/convergence-deriv_order=5-GPdA3UeWYNN}
%         }
%         \caption{5-th order accurate}
%     \end{subcaptionblock}
%     \begin{subcaptionblock}{\textwidth}
%         \centering
%         \scalebox{\tablescaling}{
%         \input{data/comp-euler-2D/isentropic-vortex/convergence-deriv_order=6-FZ4i2F0HmAb}
%         }
%         \caption{6-th order accurate}
%     \end{subcaptionblock}
%     \begin{subcaptionblock}{\textwidth}
%         \centering
%         \scalebox{\tablescaling}{
%         \input{data/comp-euler-2D/isentropic-vortex/convergence-deriv_order=7-HrvsLax3tIC}
%         }
%         \caption{7-th order accurate}
%     \end{subcaptionblock}
%     \caption{}
%     \label{tab:ce-isentropic-vortex-fd}
% \end{table}

\subsubsection{Kelvin-Helmholtz Instability}
To investigate the robustness of our entropy stable DP DG/FD numerical framework for  under-resolved inviscid turbulent flows, we simulate the Kelvin-Helmholtz instability for the compressible Euler equations in 2D. 
The domain is \([-1,1]^2\) with a
final time \(t = 15\). We consider the initial conditions
\begin{equation} \label{eq:kh-init}
\begin{split}
	&\varrho(\vec x)
		= \varrho_\text{lo}
		+ (\varrho_\text{hi} - \varrho_\text{lo}) B(x_2),
	\qquad
	\vec v(\vec x) =
	\begin{bmatrix}
		B(x_2) - 0.5 \\
		\frac{1}{10} \sin(2\pi x_1) \\
	\end{bmatrix},
	\qquad
	p(\vec x) = 1,
    \\
    &
    B(x_2) = \frac{\tanh(15(x_2 + 0.5)) - \tanh(15(x_2 - 0.5))}{2},
	\quad
	\varrho_\text{lo} = 0.5,
	\quad
	\varrho_\text{hi} = \frac{1 + A}{1 - A} \varrho_\text{lo},
    \end{split}
\end{equation}
% % \vspace{-0.5cm}
% % %%%
%
% % \noindent
% % where
% \begin{align}
%     B(x_2) = \frac{\tanh(15(x_2 + 0.5)) - \tanh(15(x_2 - 0.5))}{2},
% 	\quad
% 	\varrho_\text{lo} = 0.5,
% 	\quad
% 	\varrho_\text{hi} = \frac{1 + A}{1 - A} \varrho_\text{lo},
% \end{align}
% \vspace{-0.25cm}
% %%%
% \noindent
where \(\gamma = 7/5\) is the adiabatic index and \(A \in [0, 1)\) is the Atwood number \cite{BANSAL2024134276,ChanRanochaHendrik_et_al2022}, which defines the ratio of the background densities $\varrho_\text{hi} / \varrho_\text{lo}$.
Note that $\varrho_\text{hi} / \varrho_\text{lo}\to \infty$ as $A\to 1$, which triggers strong vorticity, faster growth of mixing layers, and a rapid transition to turbulent mixing, making the problem more challenging for numerical methods, as shown in \cite{ChanRanochaHendrik_et_al2022}.

To begin, we consider the standard setting \cite{ranocha2023highorder,GLAUBITZ2025113841},  with a modest Atwood number $A = 0.6$. For the DG scheme we vary the number of elements $16\le K\le 64$ and  the degree of polynomial approximation $3\le p\le 6$. We also  consider DP FD difference operators of interior order of accuracy $6, 7, 8, 9$  discretized with $8$ elements and vary the number of nodes $17\le n\le 65$ within an element.
We use the explicit and third-order accurate adaptive time-stepping method~\cite{Kraaijevanger1991} with relative tolerance $\mathrm{tol}_\mathrm{rel} = 10^{-6}$, absolute tolerance $\mathrm{tol}_\mathrm{abs} = 10^{-7}$ and initial timestep of \(\Delta t = 0.01 \Delta x\). Snapshots of the mass density $\rho$ on a given mesh are shown in \cref{fig:snapshots-comp-euler-kh} for DP DG, FD, and DRP operators. 
% The initial conditions are
% Initial Conditions:
% {
% \small
% \begin{equation*}
%     \varrho_0 = \frac{1}{2} + \frac{3}{2} B(x, y),\quad
%     u_0 = B(x, y) - \frac{1}{2},\quad
%     v_0 = \frac{1}{10} \sin(2\pi x),\quad
%     p_0 = 1, \quad B(x, y) = \frac{\tanh(15y + 7.5) - \tanh(15y - 7.5)}{2}
% \end{equation*}
% }
% where
% \[
%     B(x, y) = \frac{\tanh(15y + 7.5) - \tanh(15y - 7.5)}{2}.
% \]

% spatial domain: [-1,1]^2

%   SSPRK43 adaptive time stepping with
%       absolute tolerance: 1e-6
%       relative tolerance: 1e-6
%       min timestep: 5e-6 × Δx
%   final time: 15
\begin{table}[htbp]
    \centering
    \scalebox{\tablescaling}{\begin{tabular}{@{}r@{}*{4}{c@{}*{4}{r}@{}}}
    \toprule
    & \hspace{2em}
    & \multicolumn{4}{c}{Lax-Friedrichs} & \hspace{2.5em}
    & \multicolumn{4}{c}{van Leer-H\"anel} & \hspace{2.5em}
    & \multicolumn{4}{c}{SBP FD} & \hspace{2.5em}
    & \multicolumn{4}{c}{DP FD} \\
	\cmidrule{3-6}
	\cmidrule{8-11}
	\cmidrule{13-16}
	\cmidrule{18-21}
	&
    & \multicolumn{4}{c}{accuracy order} &
    & \multicolumn{4}{c}{accuracy order} &
    & \multicolumn{4}{c}{accuracy order} &
    & \multicolumn{4}{c}{accuracy order} \\
	n &
    & 6 & 7 & 8 & 9 &
    & 6 & 7 & 8 & 9 &
    & 6 & 7 & 8 & 9 &
    & 6 & 7 & 8 & 9 \\
	\midrule
	17 &
    & % LxF
    \crash{3.82} & \crash{3.77} & \crash{2.47} & \crash{2.45} &
    & % vLH
    \crash{3.72} & \crash{3.68} & \crash{2.40} & \crash{1.94} &
	& % Ns
    \crash{4.02} & \crash{3.44} & \crash{2.75} & \crash{2.96} &
	& % UNs
    \nocrash{15} & \nocrash{15} & \nocrash{15} & \nocrash{15} \\
	33 &
    & % LxF
    \crash{4.31} & \crash{4.25} & \crash{3.67} & \crash{3.68} &
    & % vLH
    \crash{4.35} & \crash{4.01} & \crash{3.61} & \crash{3.62} &
	& % Ns
    \crash{3.75} & \crash{3.10} & \crash{3.39} & \crash{3.47} &
	& % UNs
    \nocrash{15} & \nocrash{15} & \nocrash{15} & \nocrash{15} \\
	65 &
    & % LxF
    \crash{4.41} & \crash{4.76} & \crash{3.66} & \crash{3.65} &
    & % vLH
    \crash{3.87} & \crash{3.85} & \crash{3.82} & \crash{3.67} &
	& % Ns
    \crash{3.80} & \crash{3.83} & \crash{3.45} & \crash{3.45} &
	& % UNs
    \nocrash{15} & \nocrash{15} & \nocrash{15} & \nocrash{15} \\
	\bottomrule
\end{tabular}}
    \caption{
         Crash or final simulation times for the \(8^2\) element FD approximations of the Kelvin-Helmholtz instability with the Atwood number $A =0.6$.
    }
    \label{tab:ce-kh-crash-times-FD}
   % \vspace{-0.5cm}
\end{table}
\begin{table}[htbp]
    \centering
    \scalebox{\tablescaling}{\begin{tabular}{@{}r@{}*{4}{c@{}*{4}{r}@{}}}
    \toprule
    & \hspace{2em}
    & \multicolumn{4}{c}{Lax-Friedrichs} &\hspace{2.5em}
    & \multicolumn{4}{c}{van Leer-H\"anel} &\hspace{2.5em}
    & \multicolumn{4}{c}{DGSEM} &\hspace{2.5em}
    & \multicolumn{4}{c}{DP DG} \\
    \cmidrule{3-6}
    \cmidrule{8-11}
    \cmidrule{13-16}
    \cmidrule{18-21}
    &
    & \multicolumn{4}{c}{polynomial degree} &
    & \multicolumn{4}{c}{polynomial degree} &
    & \multicolumn{4}{c}{polynomial degree} &
    & \multicolumn{4}{c}{polynomial degree} \\
    K &
    & 3 & 4 & 5 & 6 &
    & 3 & 4 & 5 & 6 &
    & 3 & 4 & 5 & 6 &
    & 3 & 4 & 5 & 6 \\
    \midrule
    16 &
    % LxF
    & \crash{2.39} & \crash{3.57} & \crash{2.93} & \crash{3.06} &
    % vLH
    & \crash{4.83} & \crash{1.59} & \crash{3.24} & \crash{1.87} &
    % Ns
    & \crash{2.68} & \crash{2.53} & \crash{3.03} & \crash{2.72} &
    % UNs
    & \nocrash{15} & \nocrash{15} & \nocrash{15} & \nocrash{15} \\
    32 &
    % LxF
    & \crash{3.37} & \crash{3.81} & \crash{3.54} & \crash{3.48} &
    % vLH
    & \crash{3.68} & \crash{3.60} & \crash{3.54} & \crash{3.58} &
    % Ns
    & \crash{3.26} & \crash{2.87} & \crash{3.23} & \crash{3.17} &
    % UNs
    & \nocrash{15} & \nocrash{15} & \nocrash{15} & \nocrash{15} \\
    64 &
    % LxF
    & \crash{3.59} & \crash{3.53} & \crash{3.49} & \crash{3.50} &
    % vLH
    & \crash{3.57} & \crash{3.53} & \crash{3.47} & \crash{3.28} &
    % Ns
    & \crash{3.27} & \crash{3.34} & \crash{3.38} & \crash{3.28} &
    % UNs
    & \nocrash{15} & \nocrash{15} & \nocrash{15} & \nocrash{15} \\
    \bottomrule
\end{tabular}}
    \caption{
       Crash or final simulation times for the DG approximations of the Kelvin-Helmholtz instability with the Atwood number $A =0.6$.
    }
    \label{tab:ce-kh-crash-times-DG}
    %\vspace{-0.5cm}
\end{table}
\begin{figure}[htbp!]
    \centering
    \footnotesize
    \begin{tabular}{@{}m{1em}*3{>{\centering\arraybackslash}m{0.25\linewidth}@{}}c@{}}
        &DP DG & DP FD & DRP DP FD \\
		\rotatebox{90}{\(t=5\)} &
        \adjincludegraphics[width=\linewidth, trim = {1.0cm, 0.5\height, 1.0cm, 1.0cm}, clip]
            {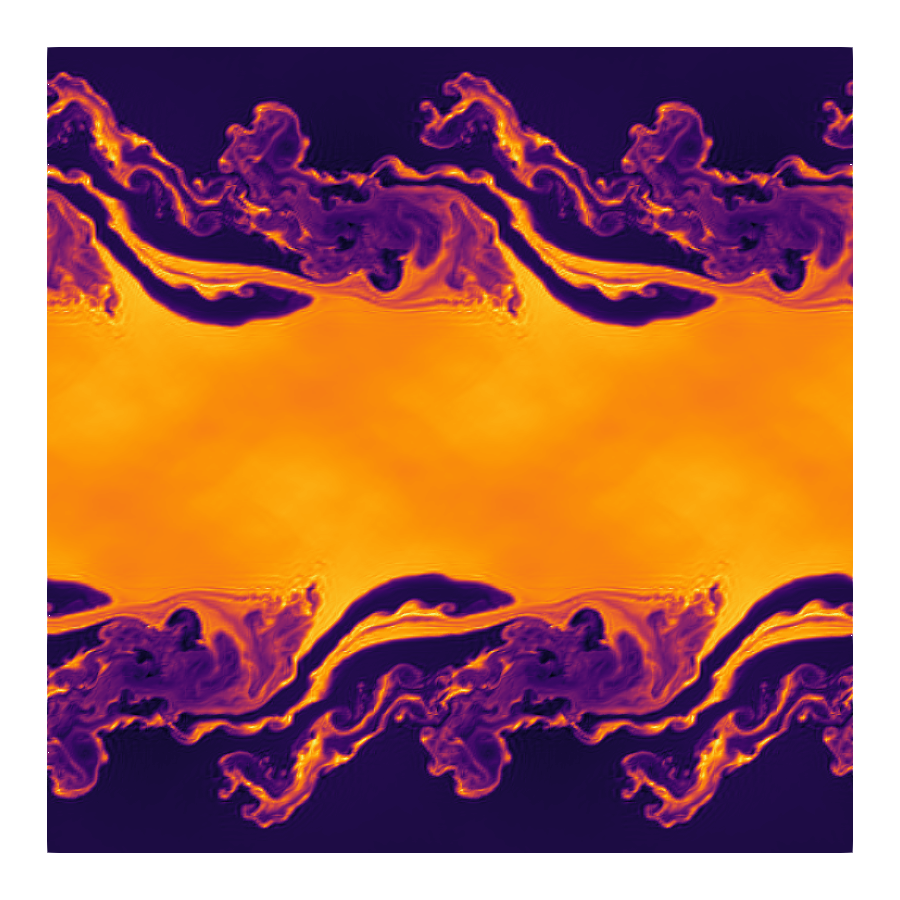} &
        \adjincludegraphics[width=\linewidth, trim = {1.0cm, 0.5\height, 1.0cm, 1.0cm}, clip]
            {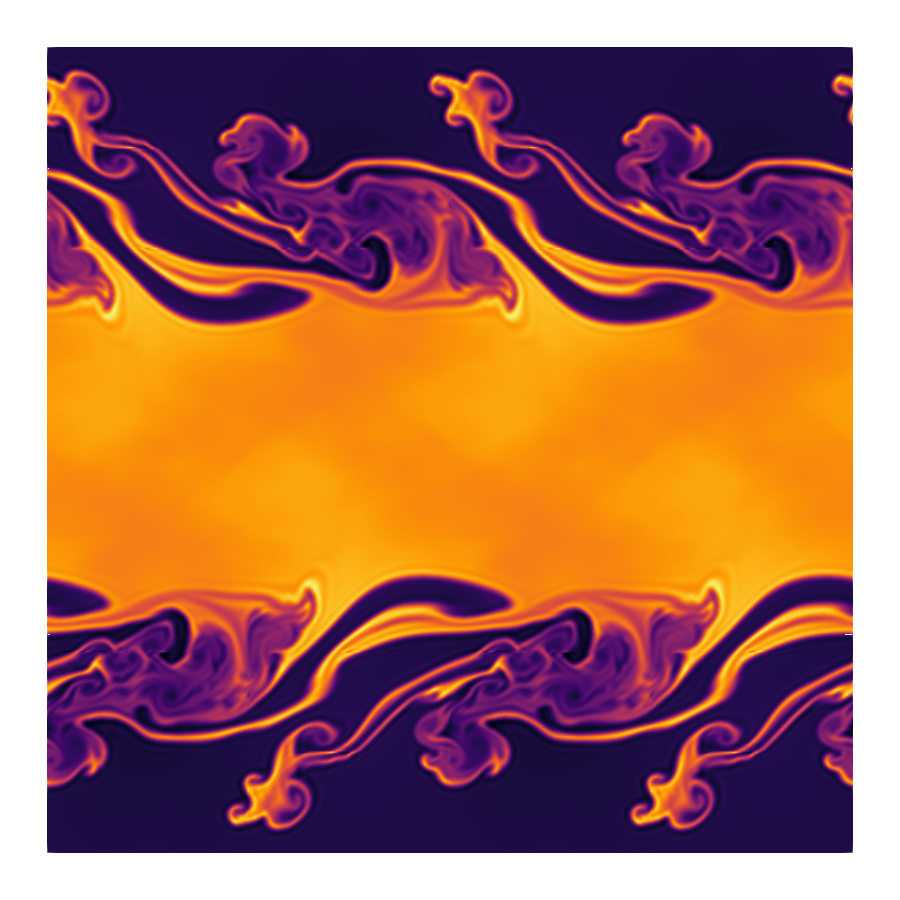} &
        \adjincludegraphics[width=\linewidth, trim = {1.0cm, 0.5\height, 1.0cm, 1.0cm}, clip]
            {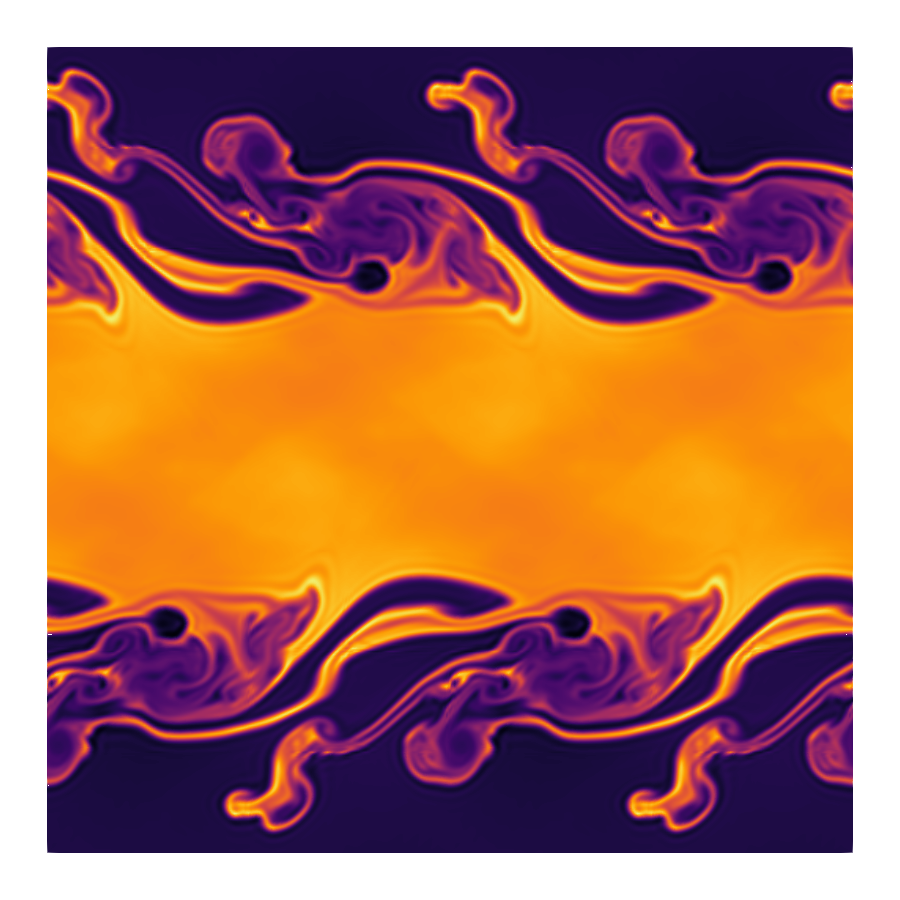}
        & \multirow{12}{*}{\includegraphics[width=2cm, trim={14cm, 9cm, 10.5cm, 9cm}, clip]{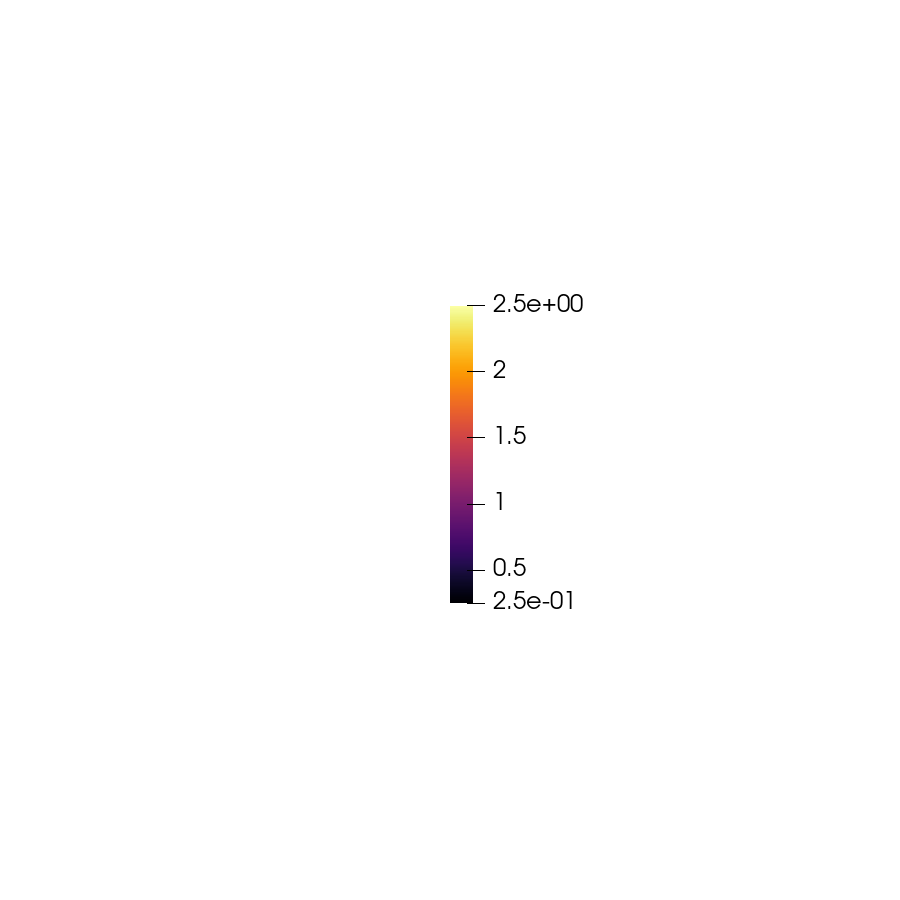}}
        \\[-4pt]
		\rotatebox{90}{\(t=6\)} &
        \adjincludegraphics[width=\linewidth, trim = {1.0cm, 0.5\height, 1.0cm, 1.0cm}, clip]
            {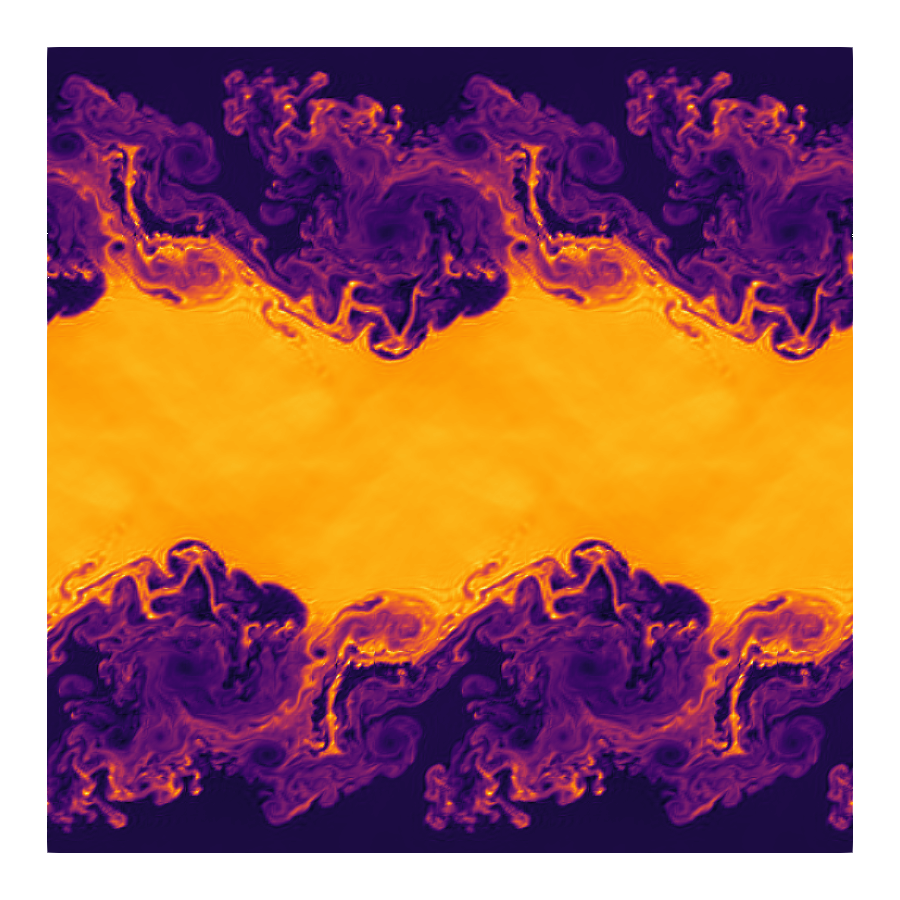} &
        \adjincludegraphics[width=\linewidth, trim = {1.0cm, 0.5\height, 1.0cm, 1.0cm}, clip]
            {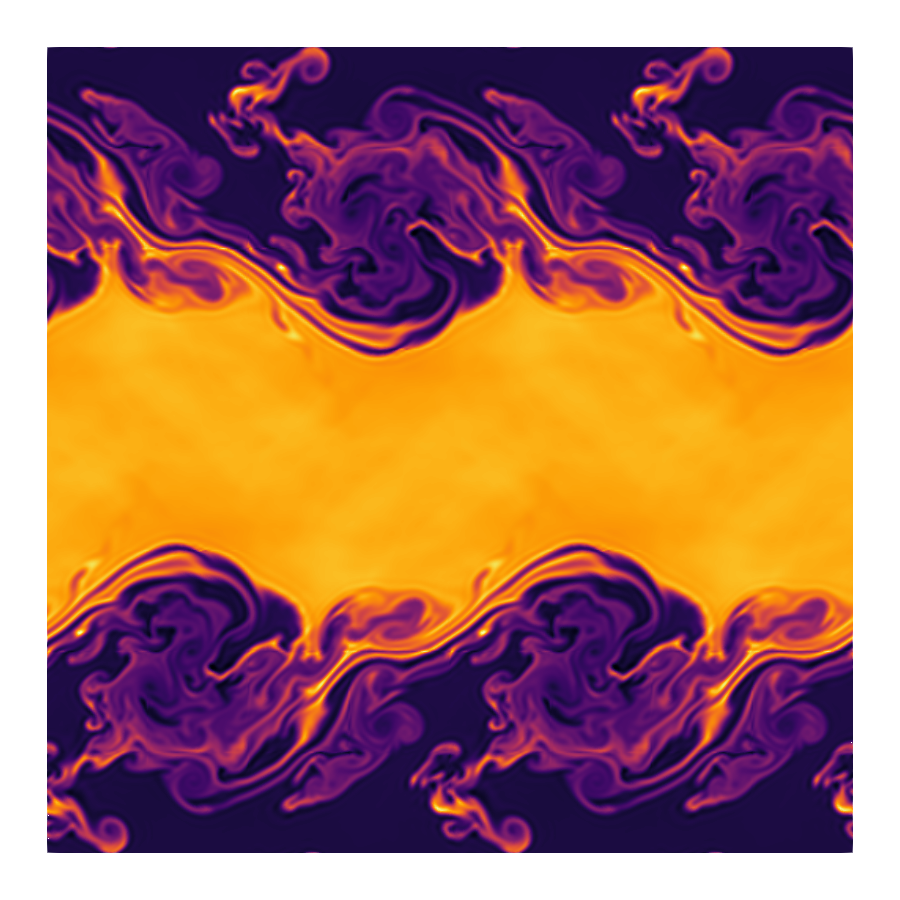} &
        \adjincludegraphics[width=\linewidth, trim = {1.0cm, 0.5\height, 1.0cm, 1.0cm}, clip]
            {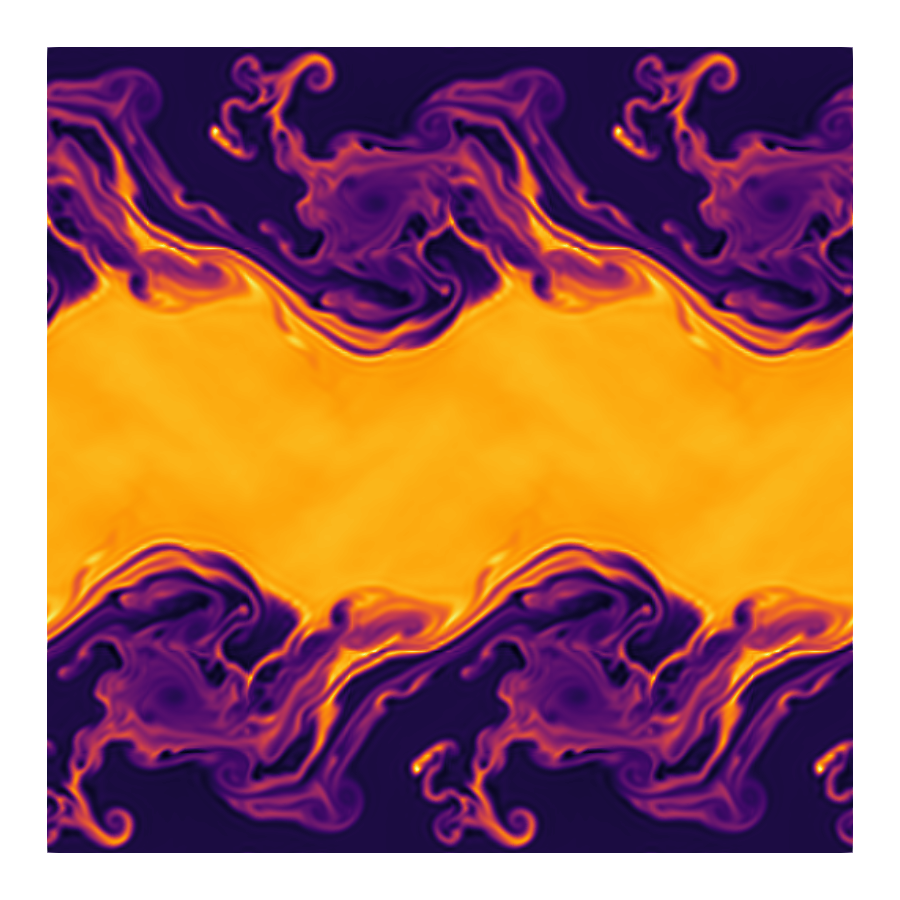}
        \\[-4pt]
		\rotatebox{90}{\(t=10\)} &
        \adjincludegraphics[width=\linewidth, trim = {1.0cm, 0.5\height, 1.0cm, 1.0cm}, clip]
            {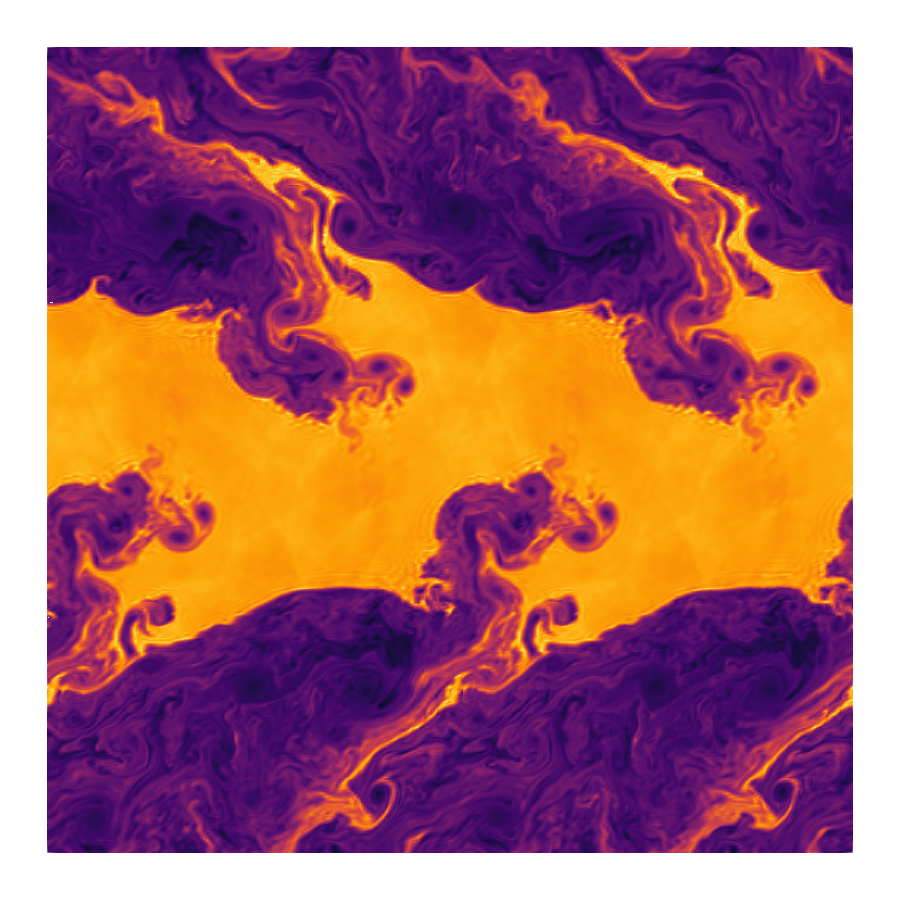} &
        \adjincludegraphics[width=\linewidth, trim = {1.0cm, 0.5\height, 1.0cm, 1.0cm}, clip]
            {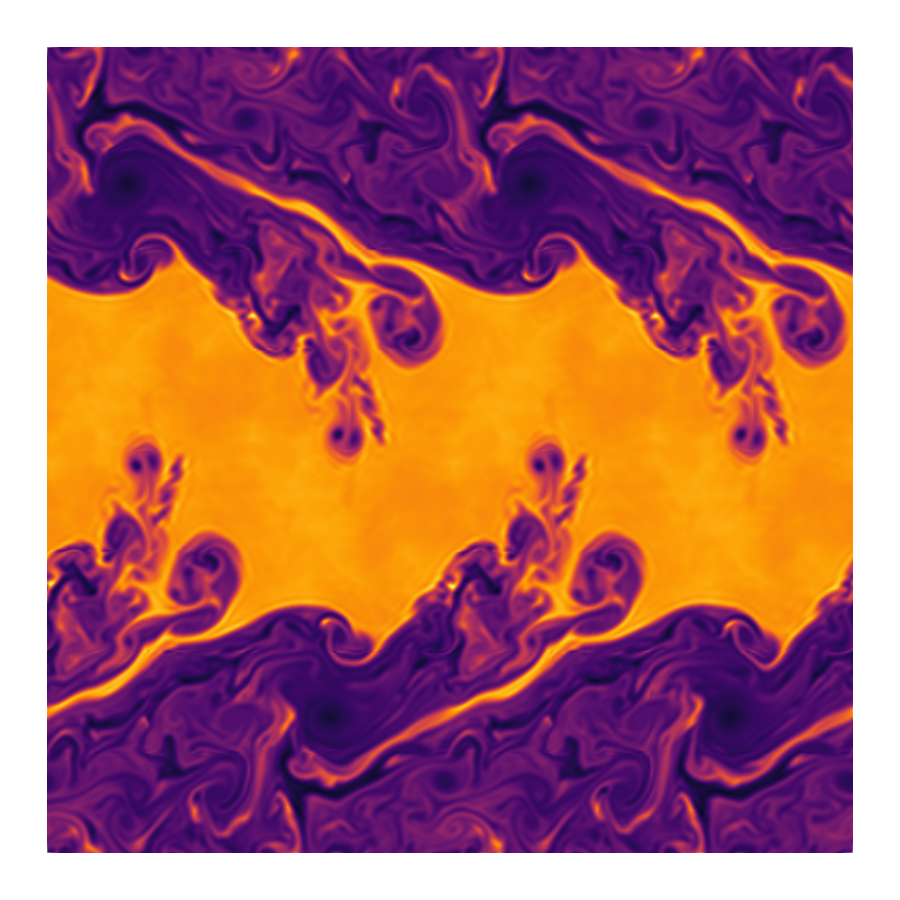} &
        \adjincludegraphics[width=\linewidth, trim = {1.0cm, 0.5\height, 1.0cm, 1.0cm}, clip]
            {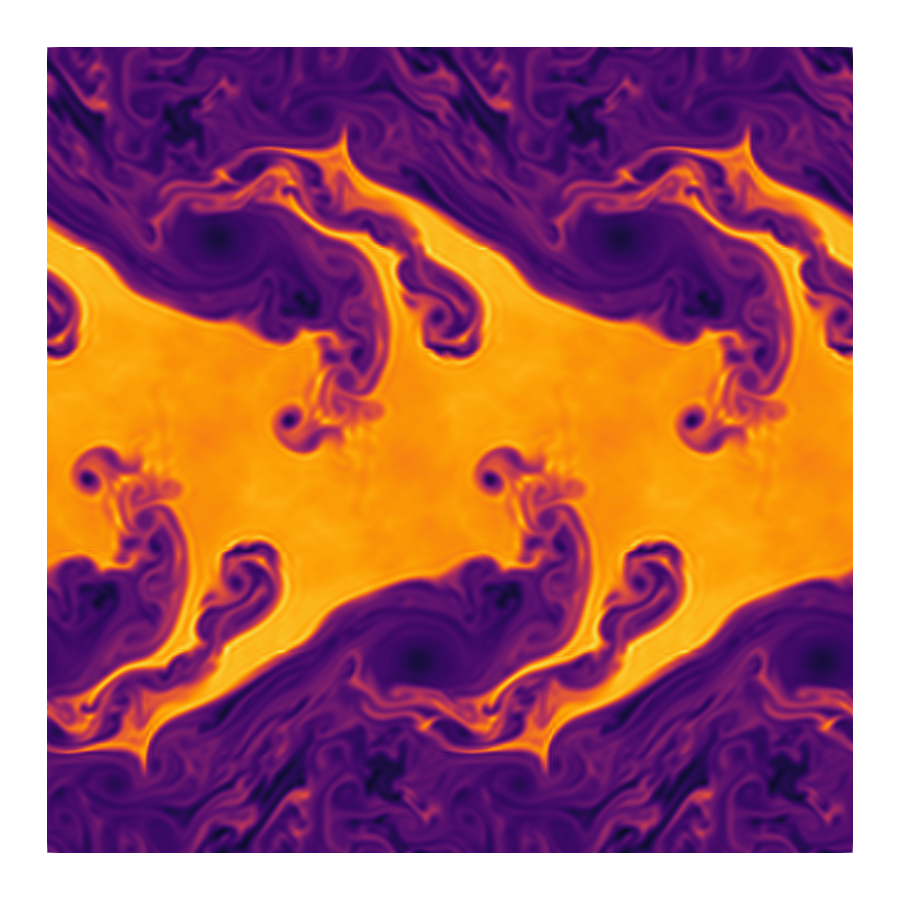}
        \\
        % &\multicolumn{3}{c}{\includegraphics[width=5cm]{images/comp-euler-2D/kelvin-helmholtz/density/colorbar.png}}
    \end{tabular}
	\caption{Snapshots of the density for the DP DG scheme on \(64^2\) elements and degree 6 polynomials per element and the 6th order DP FD scheme on \(8^2\) elements with \(65^2\) nodes per element.}
    \label{fig:snapshots-comp-euler-kh}
    %\vspace{-0.5cm}
\end{figure}
%%
% \vspace{-0.2cm}
%
% \noindent
In \cref{fig:ce-kh-conservation} we have plotted the total entropy and the relative changes in total mass, total momentum, total energy. As above, the DP SBP FD/DG schemes (with volume upwinding $\Gamma>0$) are conservative and entropy dissipative.  In addition, our DP SBP FD/DG schemes (with volume upwinding $\Gamma>0$) are stable and completed the simulation until the final time, $t=15$, without crashing.  Meanwhile all other schemes, including the linearly stable schemes (with volume upwinding $\Gamma>0$) and the standard entropy stable SBP/DGSEM (without volume upwinding $\Gamma=0$) schemes crashed before the final simulation time.
\begin{figure}[htbp]
    \centering
    \includegraphics[scale=\figurescaling]{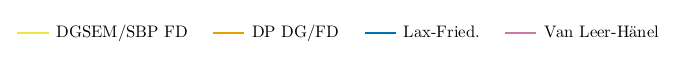} \\[-1em]
    \begin{subcaptionblock}{\textwidth}
        \centering
        \includegraphics[scale=\figurescaling]{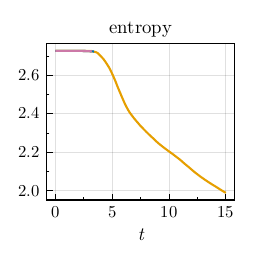}%
        \includegraphics[scale=\figurescaling]{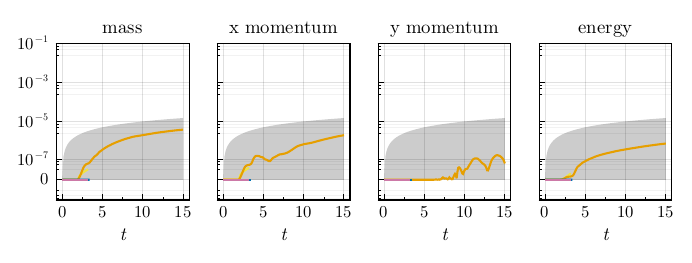}
        \\[-1em]
        \caption{DG methods using degree 6 polynomials on \(64^2\) elements.}
    \end{subcaptionblock}
    \begin{subcaptionblock}{\textwidth}
        \centering
        \includegraphics[scale=\figurescaling]{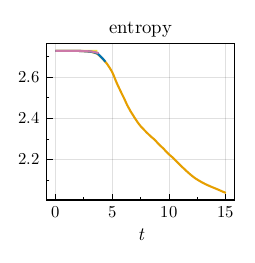}%
        \includegraphics[scale=\figurescaling]{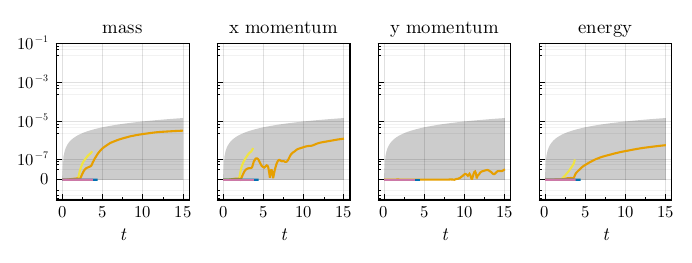}
        \\[-1em]
        \caption{6th order FD methods on \(8^2\) blocks with \(65^2\) nodes each.}
    \end{subcaptionblock}
    %\vspace{-0.5cm}
    \caption{The change over time in the total entropy and the relative change over time in total mass, momentum, and energy. The gray band shows a constant error growth of \(\qty{e-6}{s^{-1}}\), the relative tolerance used by the adaptive time stepper. The method is entropy dissipative and conservative up to time-stepping errors.}
    \label{fig:ce-kh-conservation}
   % \vspace{-0.5cm}
\end{figure}
%%%
%%
% \vspace{-0.1cm}
% %
% \noindent
%%%
We have also performed numerical simulations with different discretisation configurations. The crash or final  times are reported in  \cref{tab:ce-kh-crash-times-FD,tab:ce-kh-crash-times-DG}. Our entropy stable DP SBP FD/DG schemes (with $\Gamma >0 $) run until the final time, $t=15$, for all configurations without crashing. These reinforce the robustness and efficacy of  our entropy stable DP SBP FD/DG schemes (with $\Gamma>0$).
\begin{figure}[htbp]
    \centering
    \includegraphics[scale=\figurescaling]{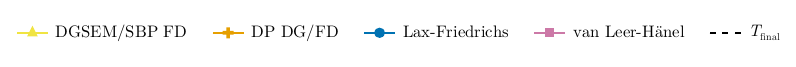} \\[-0.5em]
    \begin{subcaptionblock}{0.48\textwidth}
        \centering
        \includegraphics[scale=\figurescaling]{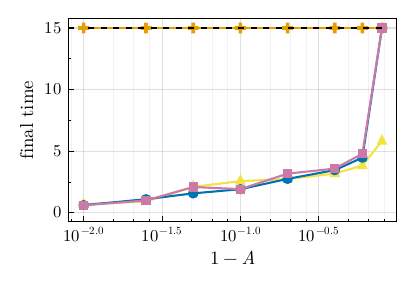}
        \\[-1em]
        \caption{DG methods using degree 6 polynomials on \(32^2\) elements.}
    \end{subcaptionblock}\hfill
    \begin{subcaptionblock}{0.48\textwidth}
        \centering
        \includegraphics[scale=\figurescaling]{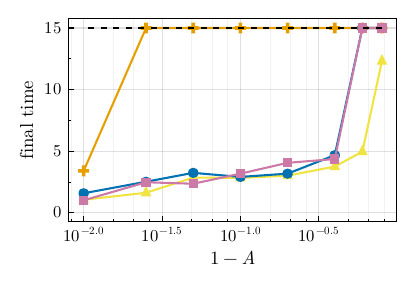}
        \\[-1em]
        \caption{6th order FD methods on \(8^2\) blocks with \(33^2\) nodes each.}
    \end{subcaptionblock}
    %\vspace{-0.5cm}
    \caption{End times of the DG (left) and FD methods (right) for the Kelvin-Helmholtz instability for Atwood numbers 0.2, 0.4, 0.6, 0.8, 0.9, 0.95, 0.975, and 0.99.}
    \label{fig:ce-kh-atwood-crash-times}
   % \vspace{-0.5cm}
\end{figure}
Furthermore, we have performed numerical experiments for the Atwood numbers $A \in \{0.2, 0.4, 0.6, 0.8, 0.9, 0.95, 0.975, 0.99\}$ and different discretizations. The snapshot of the mass density  for high Atwood numbers $A \in \{0.8, 0.9, 0.975, 0.99\}$ are shown in \cref{fig:kh-atwood-6} for the DP DG method using 6th order polynomials on \(32^2\)~elements. In \cref{fig:ce-kh-atwood-crash-times} we have plotted the crash or final times of the simulation for various schemes against the Atwood numbers. Note that while other schemes crashed,  our novel entropy stable DP FD/DG schemes are stable for high Atwood numbers. Note again that no artificial viscosity or ad hoc stabilization strategy is employed.  These  are strong testaments to the robustness of the numerical framework.
\begin{figure}[thbp]
	\centering
	\footnotesize
	\begin{tabular}{@{}m{1.0em}*4{>{\centering\arraybackslash}m{0.23\linewidth}@{}}}
		& \(A = 0.8\) &\(A = 0.9\) &\(A = 0.975\) &\(A = 0.99\) \\
		& \includegraphics[width=\linewidth, trim={8cm, 9cm, 8cm, 0cm}, clip]
			{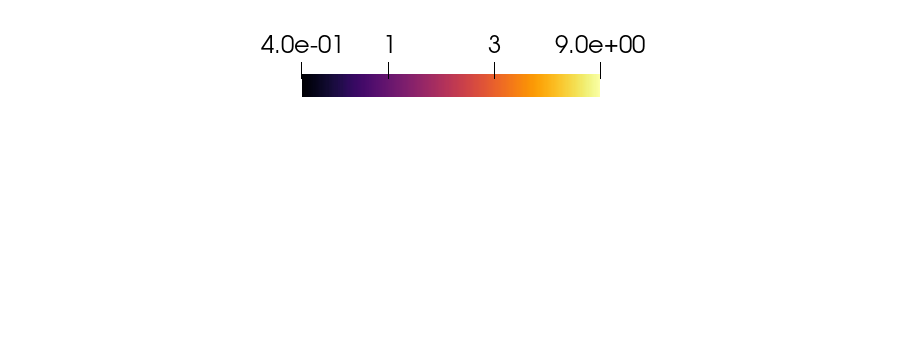}
		& \includegraphics[width=\linewidth, trim={8cm, 9cm, 8cm, 0cm}, clip]
			{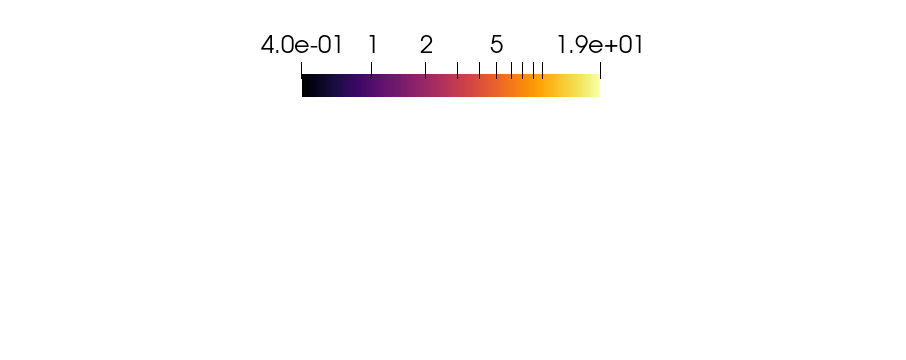}
		& \includegraphics[width=\linewidth, trim={8cm, 9cm, 8cm, 0cm}, clip]
			{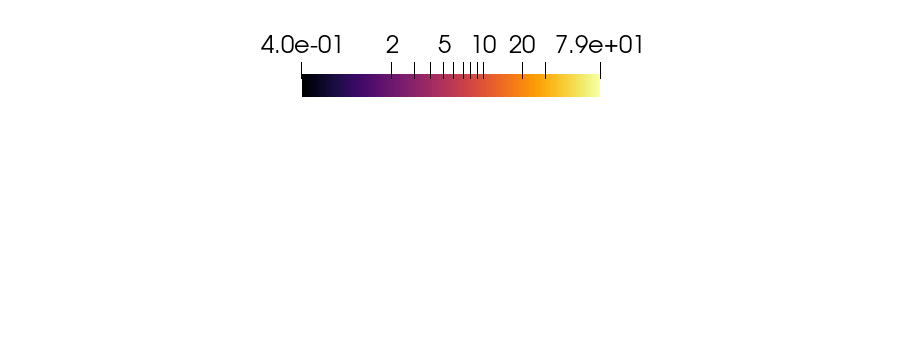}
		& \includegraphics[width=\linewidth, trim={8cm, 9cm, 8cm, 0cm}, clip]
			{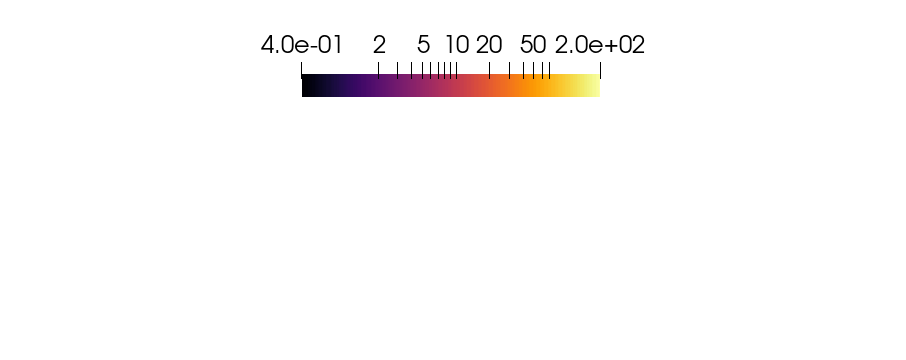}
        \\[-4pt]
		\rotatebox{90}{\(t=2\)}
		& \adjincludegraphics[width=\linewidth, trim={1cm, 0.5\height, 1cm, 1cm}, clip]
			{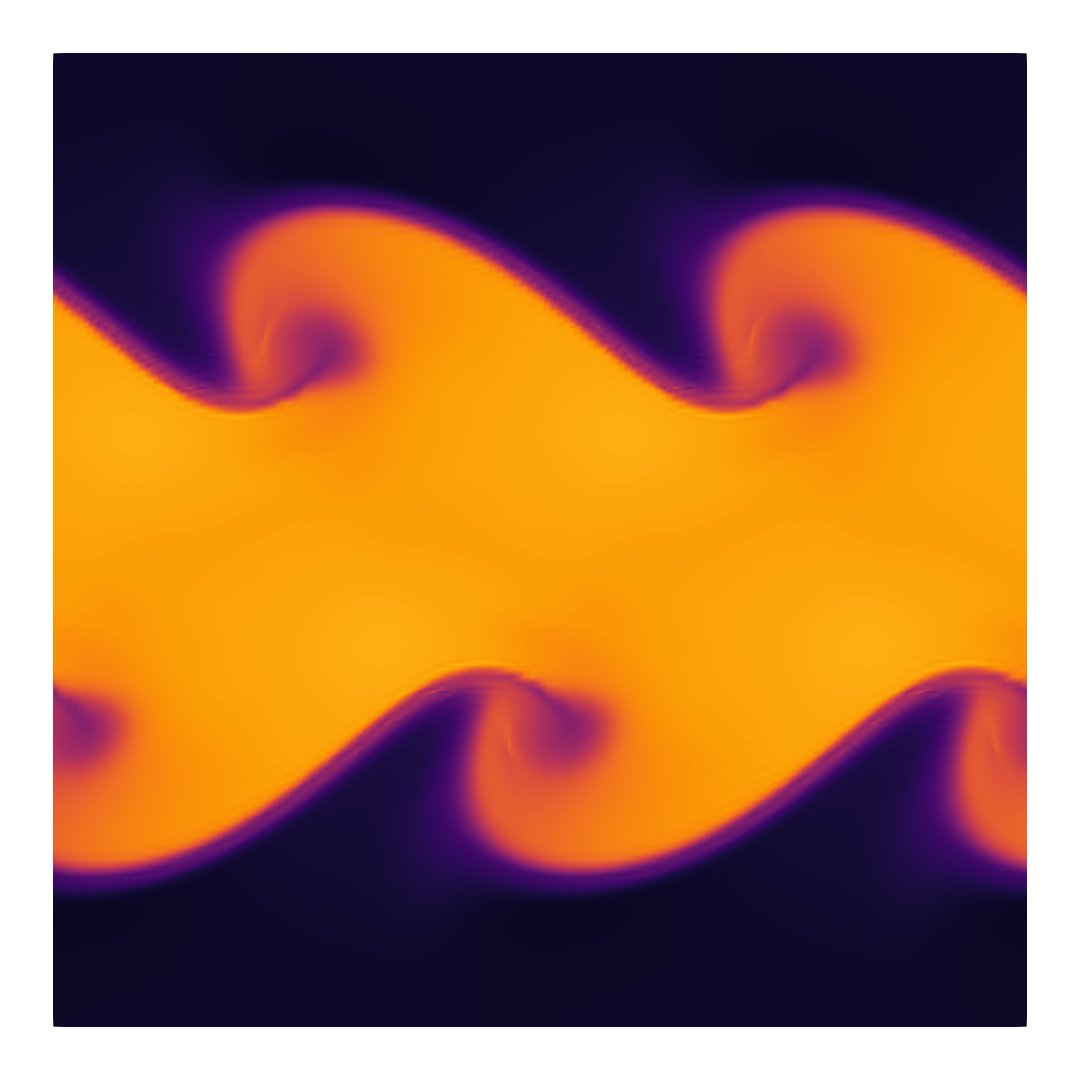}
		& \adjincludegraphics[width=\linewidth, trim={1cm, 0.5\height, 1cm, 1cm}, clip]
			{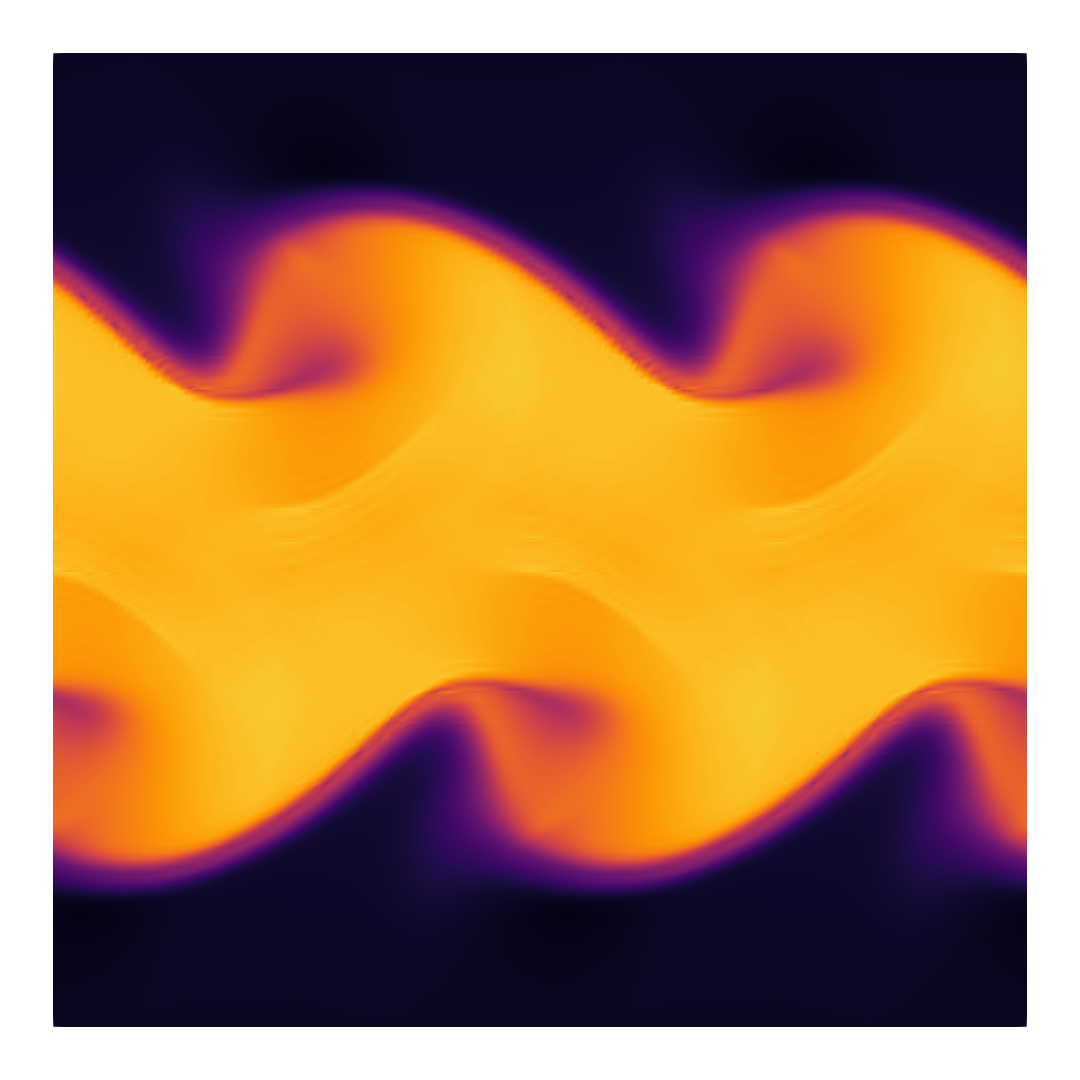}
		& \adjincludegraphics[width=\linewidth, trim={1cm, 0.5\height, 1cm, 1cm}, clip]
			{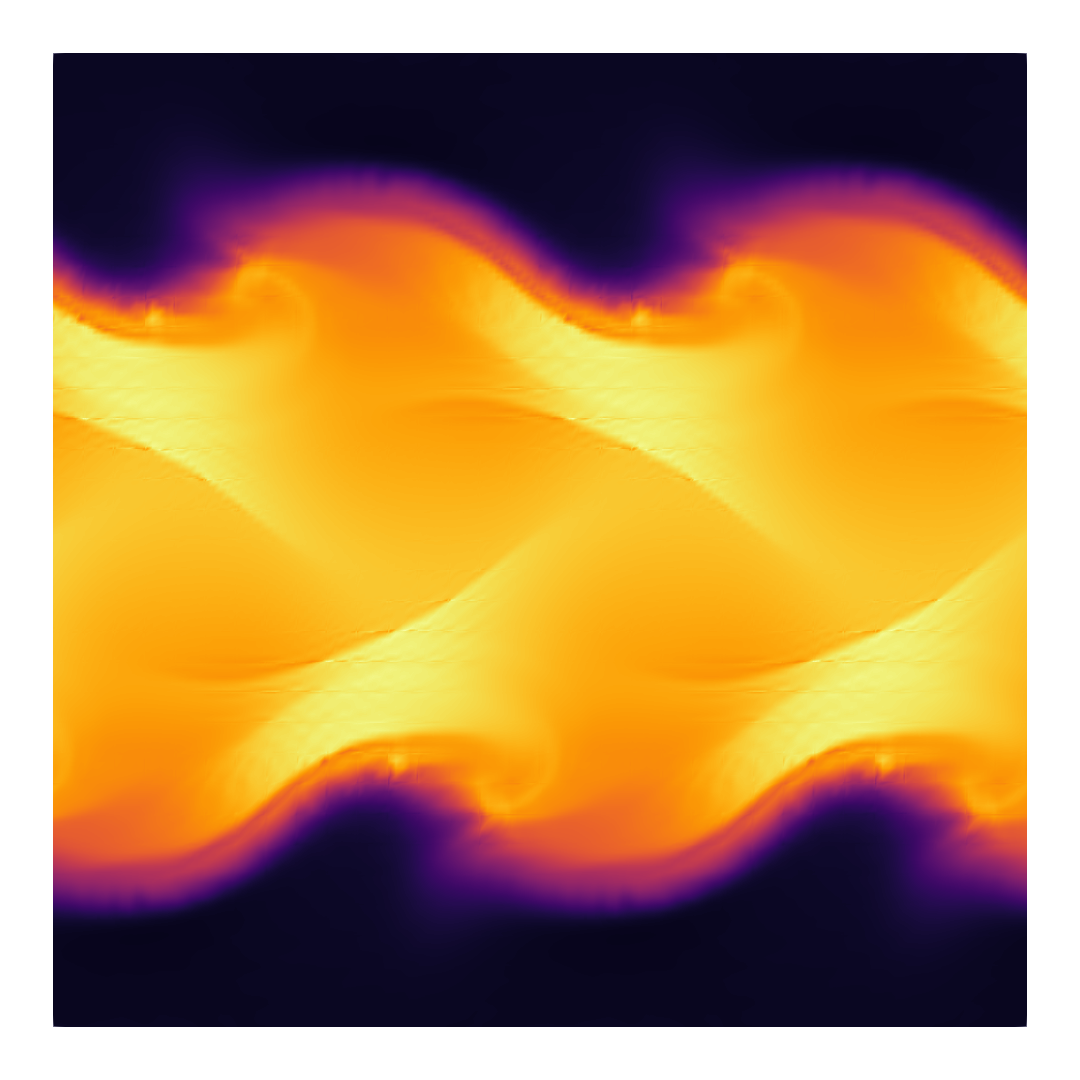}
		& \adjincludegraphics[width=\linewidth, trim={1cm, 0.5\height, 1cm, 1cm}, clip]
			{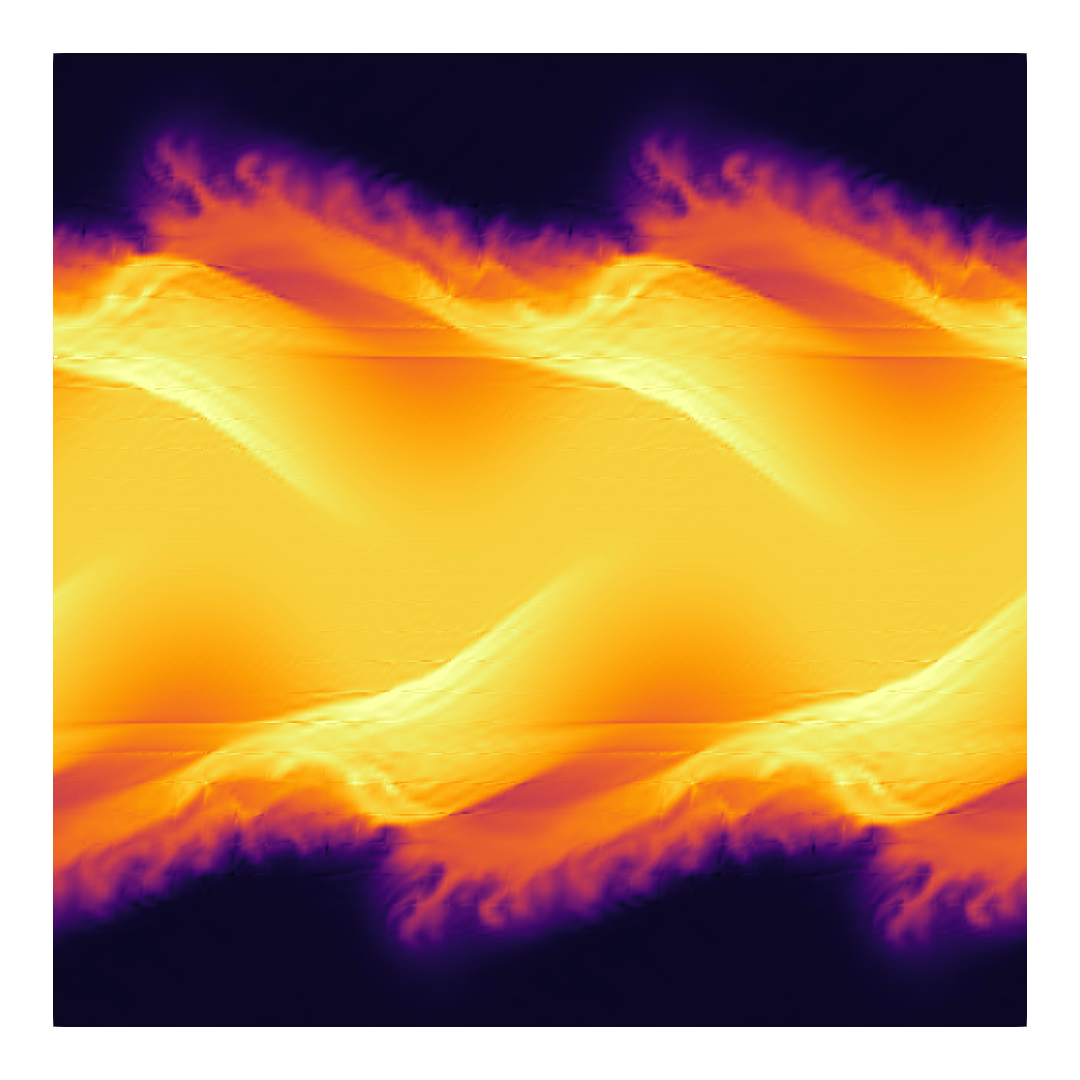}
        \\[-4pt]
		\rotatebox{90}{\(t=4\)}
		& \adjincludegraphics[width=\linewidth, trim={1cm, 0.5\height, 1cm, 1cm}, clip]
			{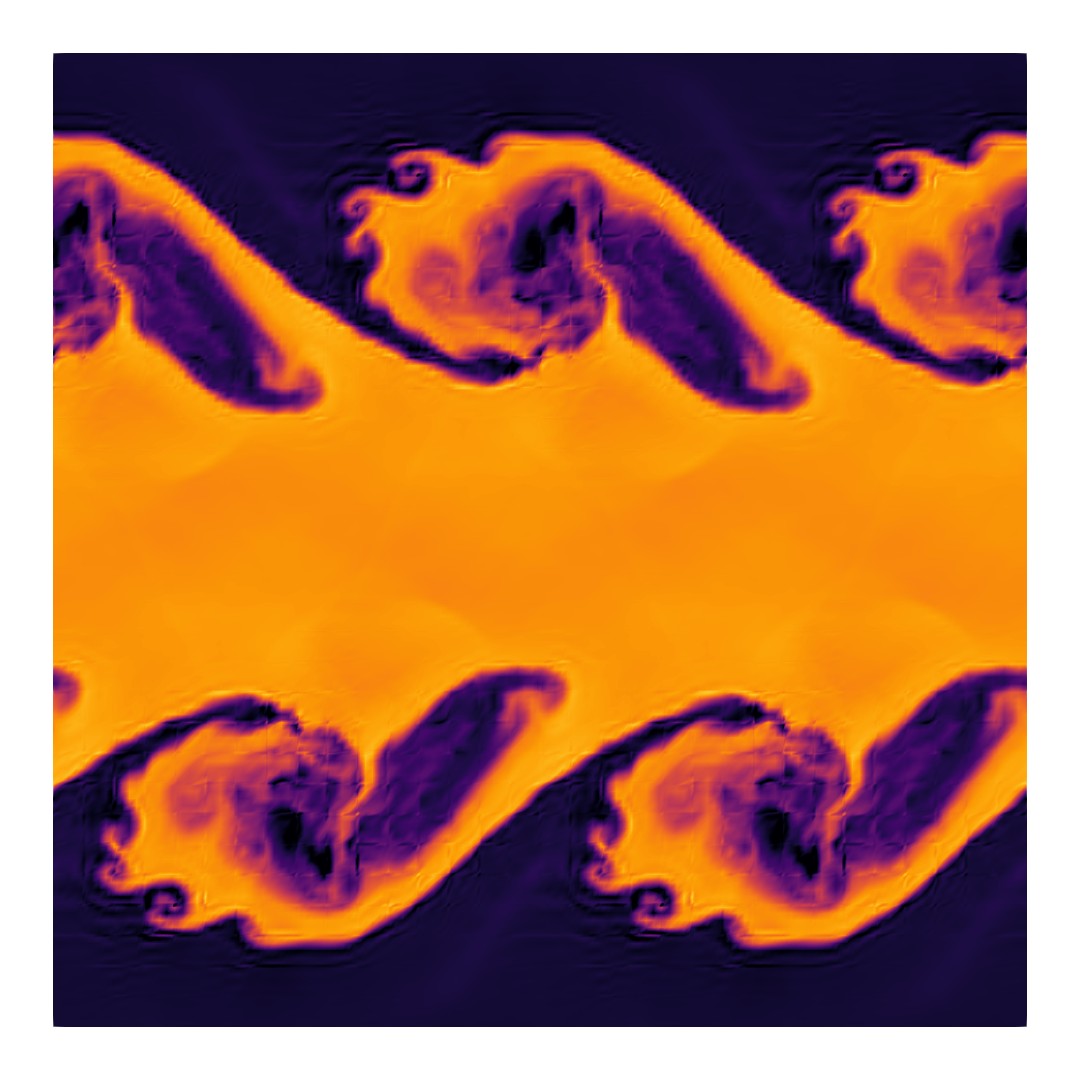}
		& \adjincludegraphics[width=\linewidth, trim={1cm, 0.5\height, 1cm, 1cm}, clip]
			{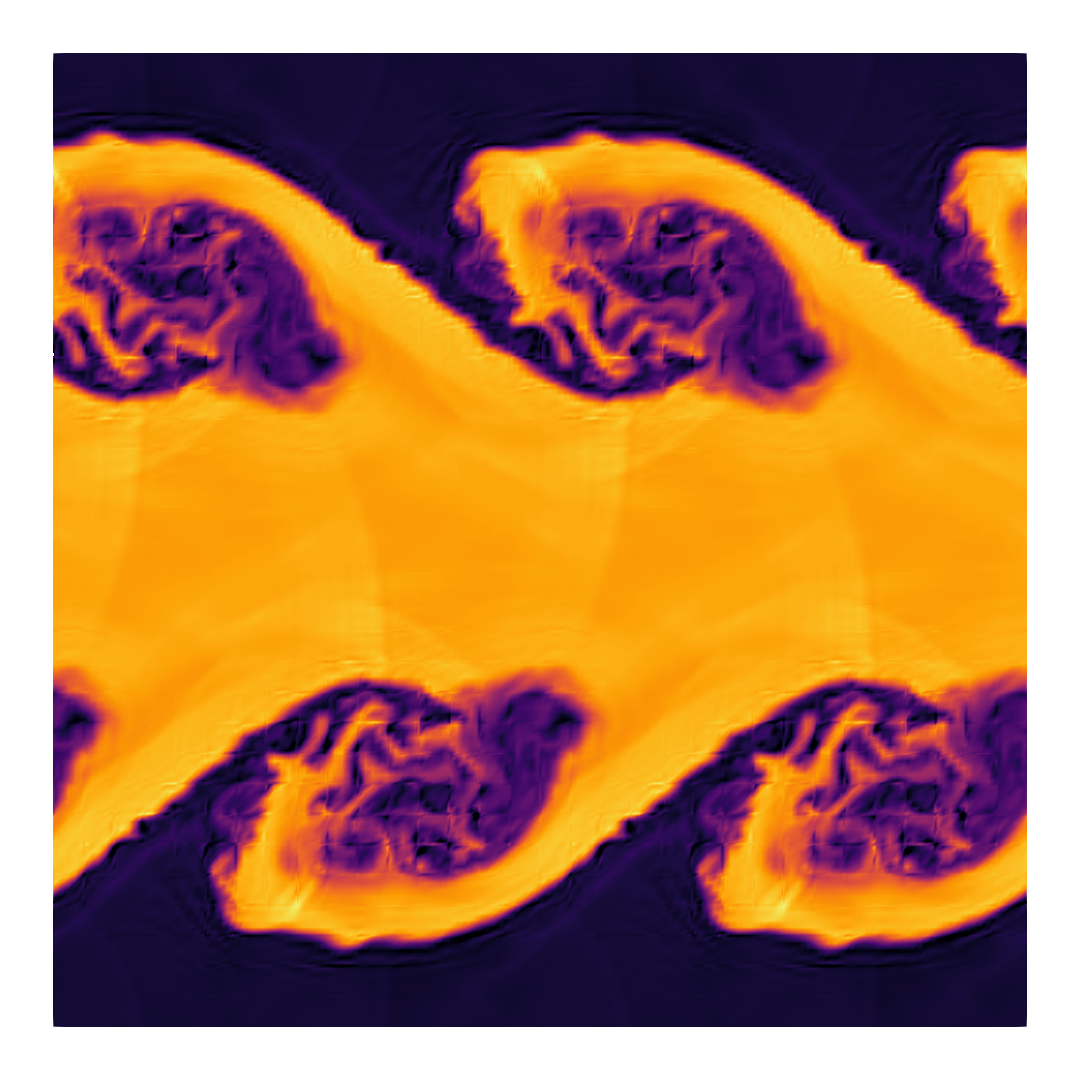}
		& \adjincludegraphics[width=\linewidth, trim={1cm, 0.5\height, 1cm, 1cm}, clip]
			{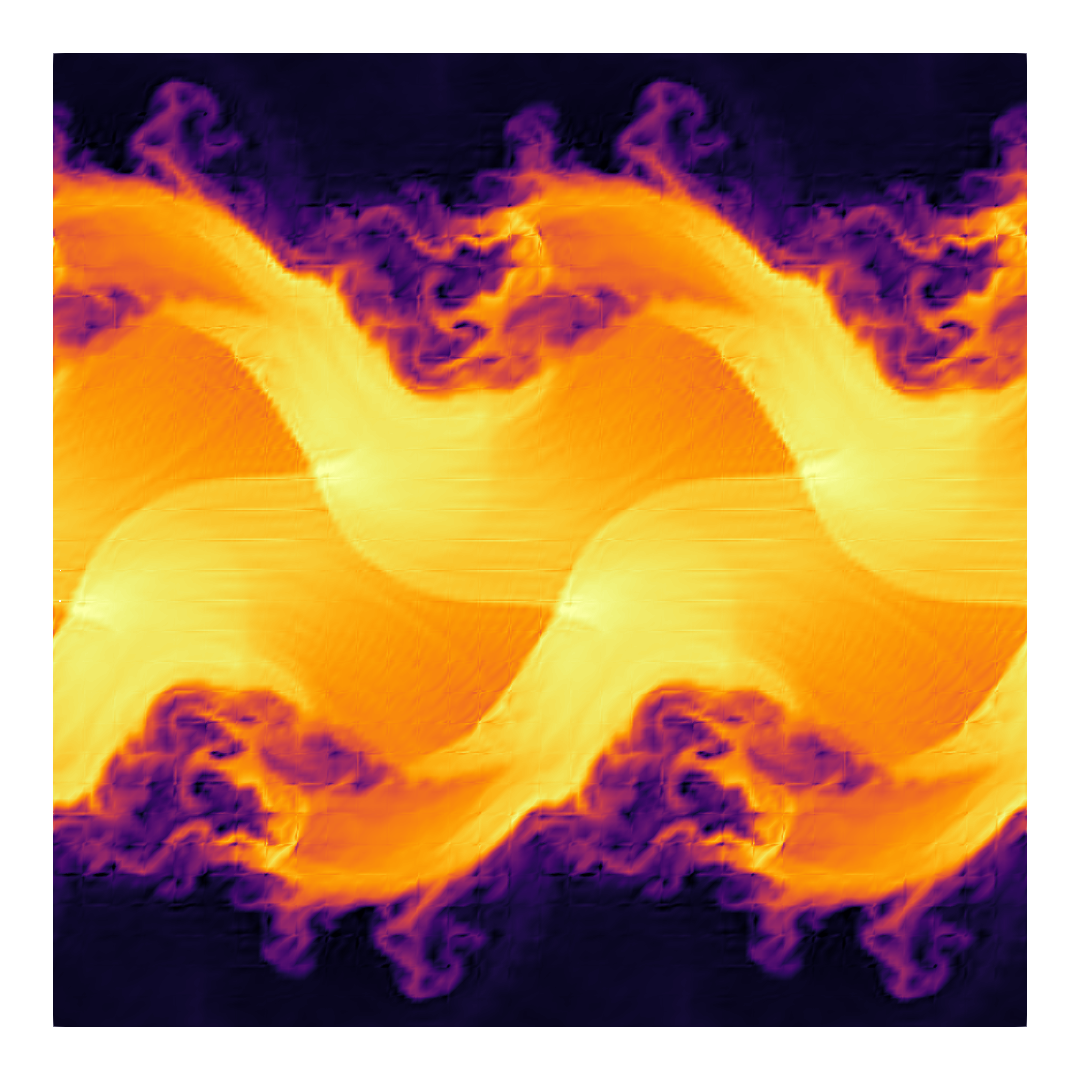}
		& \adjincludegraphics[width=\linewidth, trim={1cm, 0.5\height, 1cm, 1cm}, clip]
			{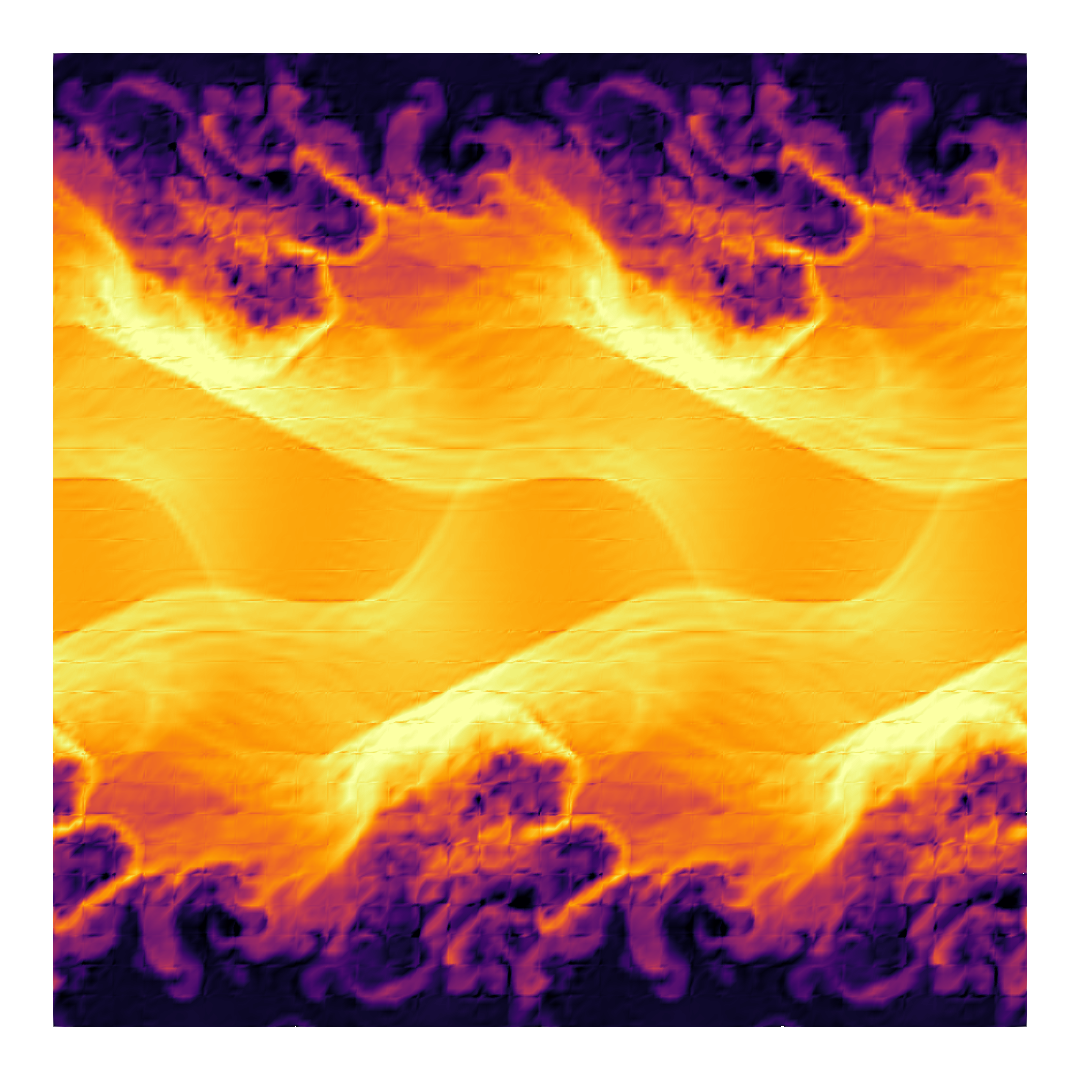}
        \\[-4pt]
		\rotatebox{90}{\(t=6\)}
		& \adjincludegraphics[width=\linewidth, trim={1cm, 0.5\height, 1cm, 1cm}, clip]
			{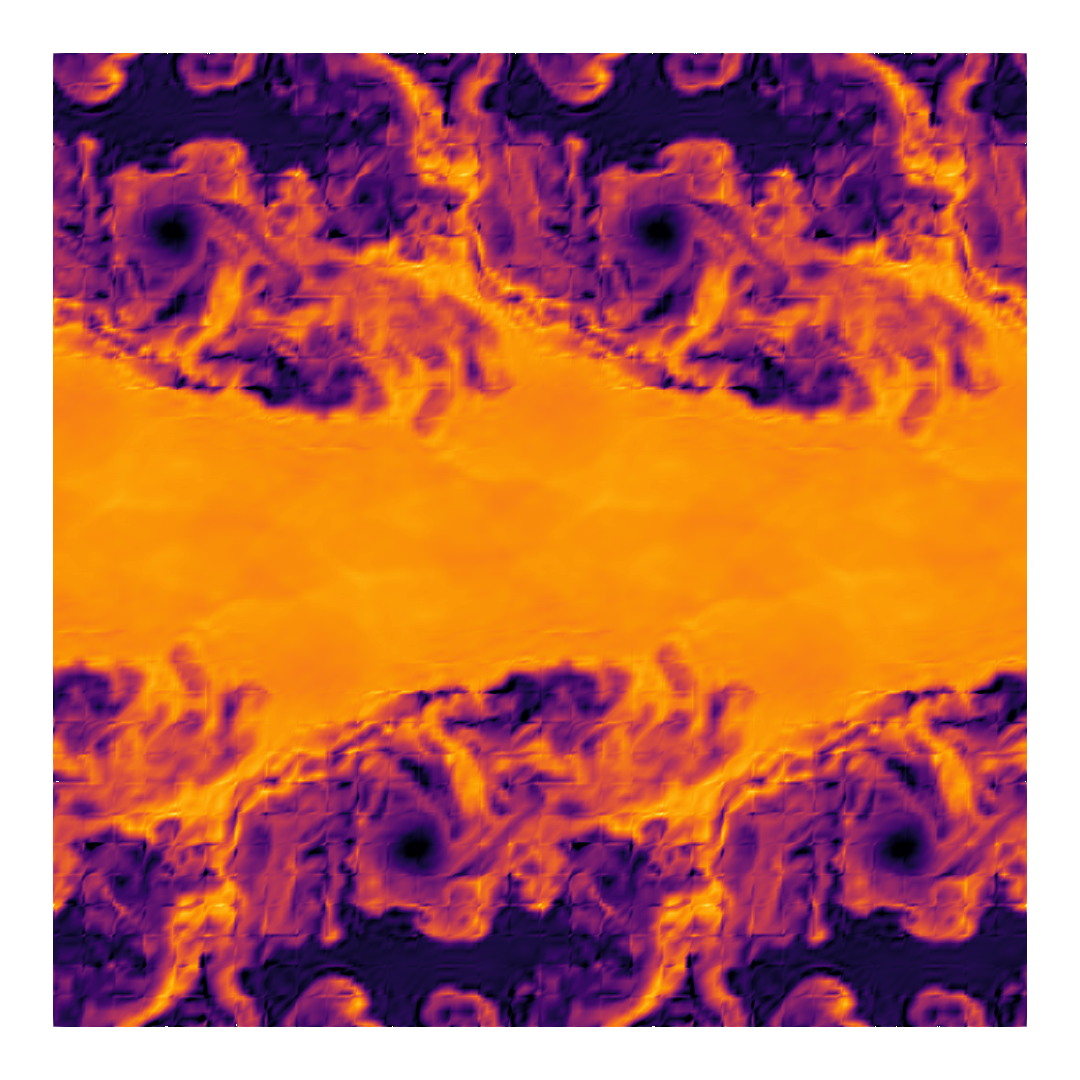}
		& \adjincludegraphics[width=\linewidth, trim={1cm, 0.5\height, 1cm, 1cm}, clip]
			{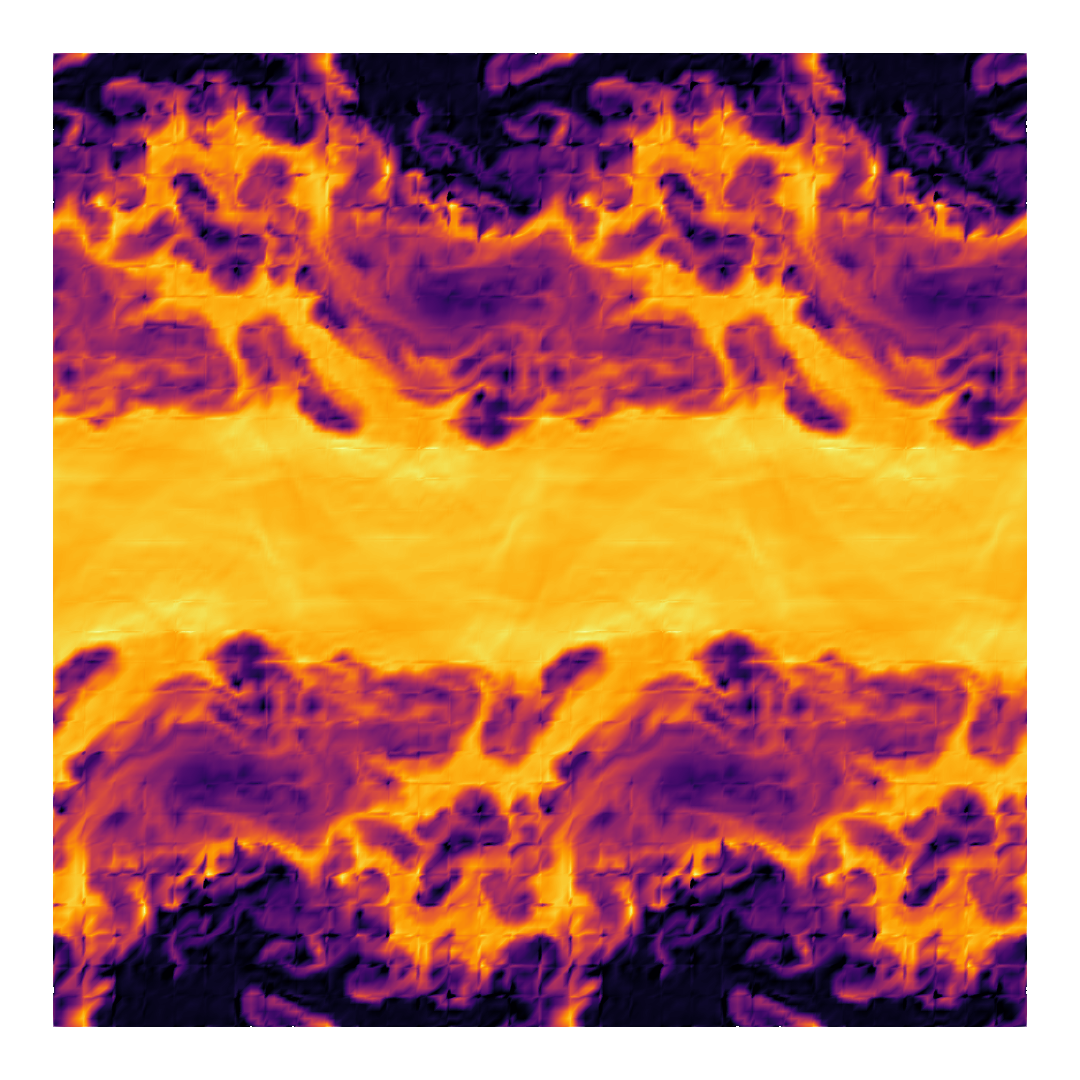}
		& \adjincludegraphics[width=\linewidth, trim={1cm, 0.5\height, 1cm, 1cm}, clip]
			{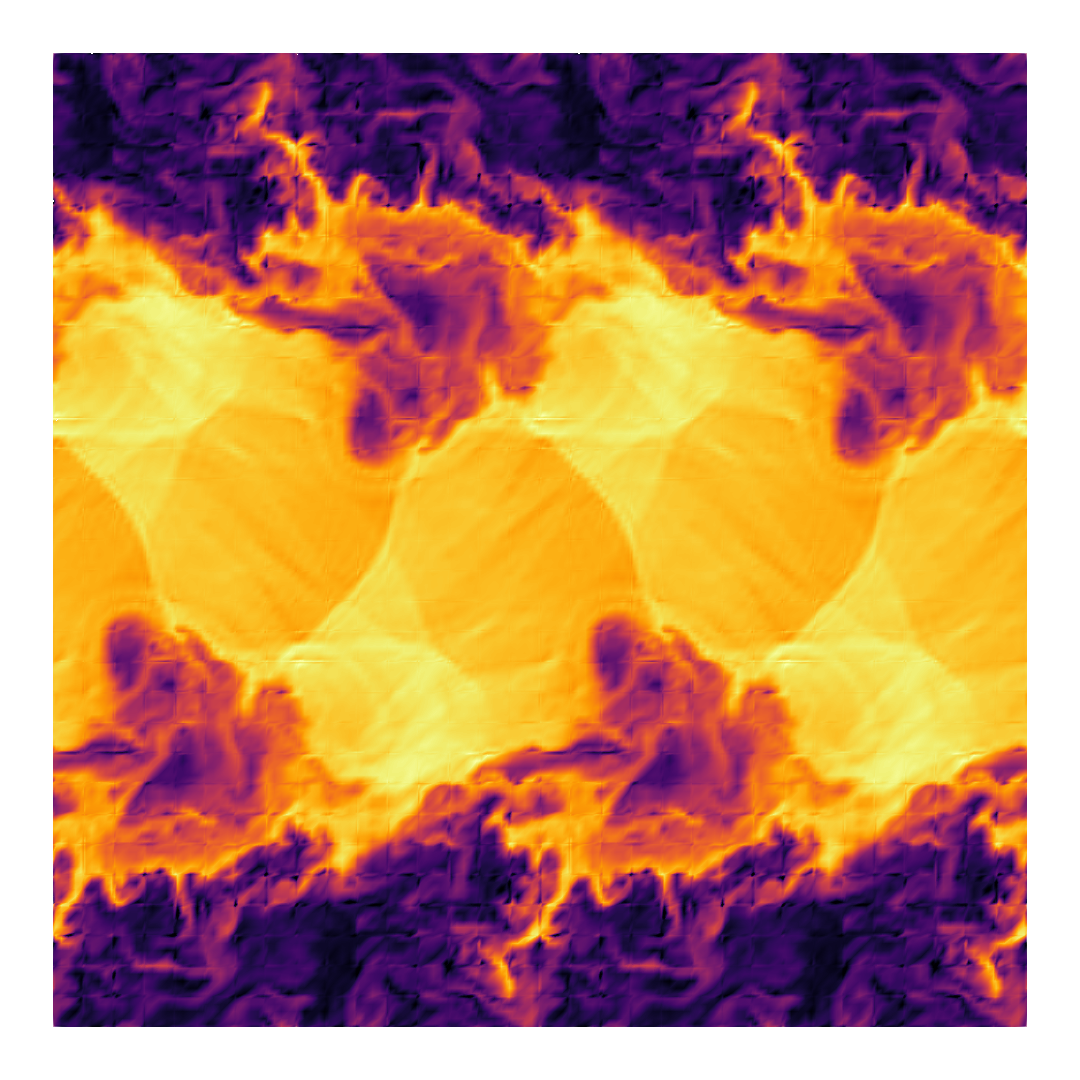}
		& \adjincludegraphics[width=\linewidth, trim={1cm, 0.5\height, 1cm, 1cm}, clip]
			{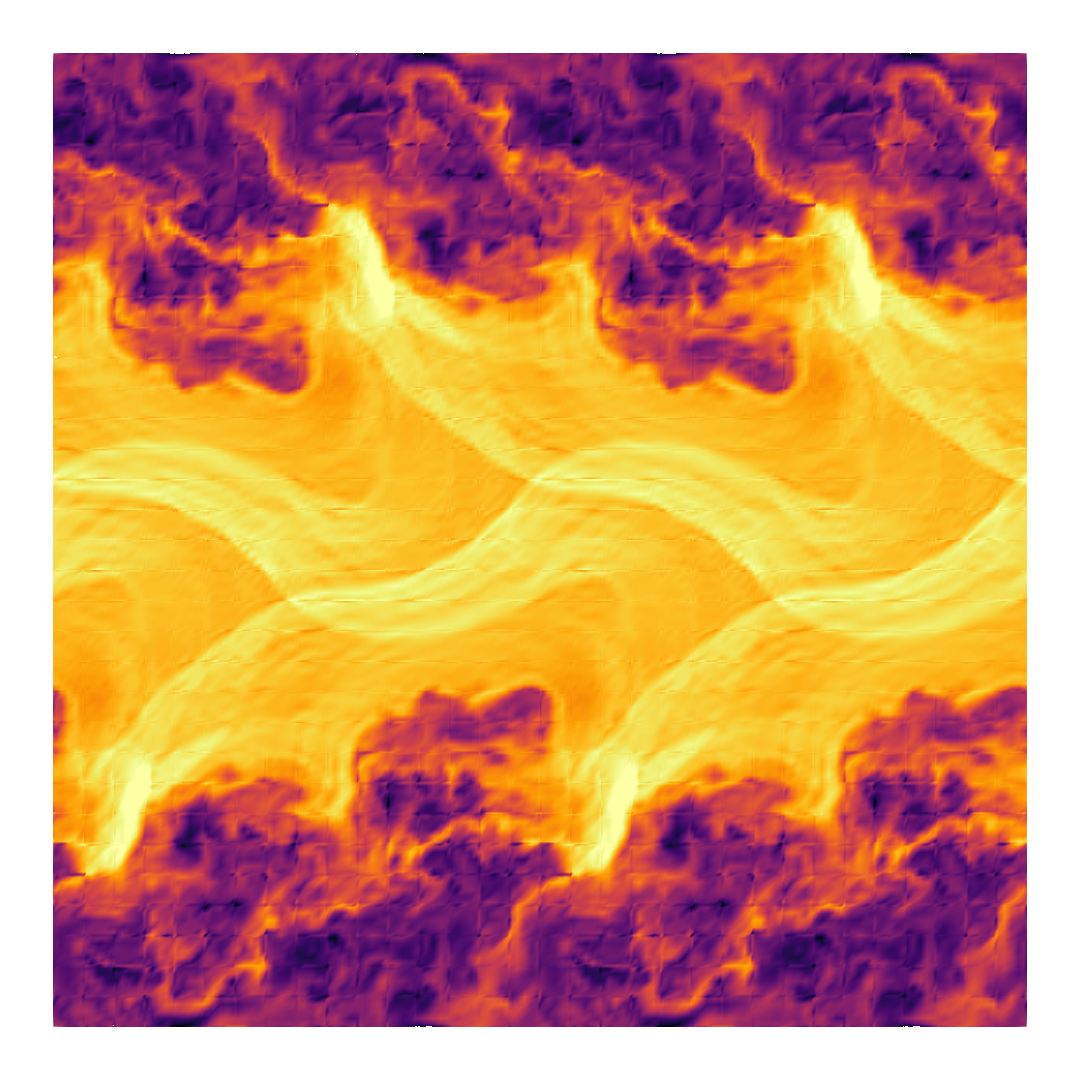}
	\end{tabular}
	\caption{
		Snapshots of the density for the high Atwood number Kelvin-Helmholtz
		instability. The results were generated using the DP DG
		method using degree 6 polynomials on \(32^2\)~elements. We use a
		logarithmic color scale to show the range of scales present in the
		solution.
	}
	\label{fig:kh-atwood-6}
    %\vspace{-0.5cm}
\end{figure}

\section{Summary and outlook } \label{sec:conclusion}
The design of provably stable  numerical methods for nonlinear hyperbolic conservation laws pose a significant challenge. In this study we have proposed and analyzed the DP and SBP framework for accurate and robust numerical approximations of nonlinear conservation laws. The DP SBP FD operators \cite{MATTSSON2017upwindsbp,williams2024drp} are a dual-pair of backward and forward difference operators, which together preserve the SBP property. In addition, the DP SBP  operators are designed to be upwind, that is they come with some built-in dissipation everywhere, as opposed to traditional SBP and DG methods which can only induce dissipation through numerical fluxes acting at element interfaces. We combine the DP SBP operators together with skew-symmetric and upwind flux splitting of nonlinear hyperbolic conservation laws. Our semi-discrete approximation is provably entropy-stable for arbitrary nonlinear hyperbolic conservation laws.  Some examples are given to validate the theoretical guarantees,  and demonstrate significant improvements over the state-of-the-art methods. 

There are multiple directions in which this work can be extended. Important and possible directions  for future work include numerical treatments of non-periodic boundary conditions, multi-block schemes and curvilinear meshes to handle complex geometries and 2D manifolds, local energy stability analysis, proper treatments of supersonic shocks, extensions to more complex applications like thermal shallow water equations,  moist compressible Euler equations and multi-component compressible Euler equations with chemistry, and to other PDEs such as  magnetohydrodynamics equations and plasma simulations.

\section*{Acknowledgments}
Dougal Stewart and Nathan Lee would like to 
acknowledge the kind support and scholarship from The Australian National University (ANU) received through the  Summer Research Scholarship Program at the Mathematical Sciences Institute, ANU.

%\clearpage
% 
% 
\appendix
%%%
%%%
% 
\section{Burgers' Equations}\label{sec:Burgers-equation-numerical-method}
We consider the semi-discrete approximation of the  Burgers' equation with the unknown $u$, the  PDE flux $f(u) = u^2/2$, the entropy $e(u)= u^2/2$ and the entropy variable $g = \partial_u e(u)=u$.
The skew-symmetric form of the divergence of the flux is 
$$
F(u, f(u), \partial_x) = \frac13 u \partial_xu + \frac13 \partial_x(u^2).
$$
The two-element upwind DP SBP semi-discrete approximation of the Burgers'  is given by
\begin{align} \label{eq:burgers-semidiscrete}
\dv{}{t} \mathbf{u} +\frac13 \vb u \vec{D}_x\vb u + \frac13 \vec{D}_x \vb u^2 = \frac{1}{2}\mathbf{H}_x^{-1}\left(\boldsymbol{\alpha} \otimes\left(\vec{B}_{Ix}+\vec{B}_{nx}\right)\right)\mathbf{u} + \frac{1}{2}\Gamma\left(I_m \otimes\left(\vec{D}_{+}-\vec{D}_{-}\right)\right)\mathbf{u}, 
\end{align}
where $\gamma^k=\alpha^k = \max_{j\in \{1, \cdots,n\}}|{ u}^k_j | >0$.
From the first identity in  \eqref{eq:B4_periodic_systems} we find that 
$$
{\inp{\vec g}{ \vb{F}(\vb u, \vb{f}(\vb{u}),  \vec{D}_x)}_{H_x}}=\frac13 \inp{\vb u^2}{\vec{D}\vb u}_H + \frac13 \inp{\vb u}{\vec{D}  \vb u^2}_{H} = 0.
$$
It therefore follows that 
\begin{equation}
   \dv{}{t} E_h(t):= \frac12 \dv{}{t} \norm{\vb u}_{H_x}^2 =-\frac12\sum_{k=1}^{K}\frac{\alpha^{k} + \alpha^{k+1}}{2}\lJump{\mathbf{u}^k}\rJump^2  -\frac{1}{2}\sum_{k=1}^{K}{\gamma^k\inp{ \mathbf{u}^k} {\left(D_{-}-D_{+} \right)\mathbf{u}^k}_{H}}  \le 0.
\end{equation}
Note that because of the periodic boundary conditions, at $k=K$ we have 
$$
\frac{\alpha^{K} + \alpha^{K+1}}{2}\lJump{\mathbf{u}^K}\rJump^2 =\frac{\alpha^{K} + \alpha^{1}}{2}({\mathbf{u}_1^{1}-\mathbf{u}_n^K})^2.
$$

Next we show the conservation property of the semi-discretisation approximation~\eqref{eq:burgers-semidiscrete} of the Burgers' equation.
We will show that the  total "mass" is semi-discretely conserved, as in the continuous equation.
%%%
%%%
Consider the time derivative of the total "mass", we have
{\small
\begin{align*}
\partial_t \inp{\vb 1}{\vb u}_{H_x} = -\frac13 \inp{\vb 1}{\vb u \vec{D}\vb u}_{H_x} - \frac13\inp{\vb 1}{\vec{D} {\vb u}^2}_{H_x} + \frac {\alpha}{2} \inp{ \left(\vec{B}_{I}+\vec{B}_{n}\right)\mathbf{1}}{\mathbf{u}} + \frac\gamma2\inp{(\vec{D}_+-\vec{D}_-) \vb 1}{\vb u}_{H}.
\end{align*}
}
%%%
With the fact that $H_x$ is diagonal we have
\begin{align*}
\partial_t \inp{\vb 1}{\vb u}_{H_x} &= -\frac13 \inp{\vb u}{\vec{D} \vb u}_{H_x} + \frac13\inp{\vec{D}\vb 1}{{\vb u}^2}_{H_x}
= -\frac16(\inp{\vb u}{\vec{D}\vb u}_{H_x} + \inp{\vb u}{\vec{D}\vb u}_{H_x})=0. 
\end{align*}

 \section{Shallow Water Equations}\label{sec:swe-equation-numerical-method}
Now, consider the numerical approximation of the nonlinear shallow water equations in flux form, where the unknowns are $\vb{U}=\left(h, uh\right)^T$ with $h>0$ being the water height, $u$ is the flow velocity and $g>0$ is the acceleration constant due to gravity. The entropy is the total mechanical energy $e(U) = \frac12\left(hu^2 + gh^2\right)$. The skew-symmetric  form of the divergence of the PDE flux and the entropy variables are given by
\begin{align} 
\vb{F}(\vb U, \vb{f}(\vb{U}), \partial_{\vb x})=
\begin{bmatrix}
      \partial_x (hu)\\
    \frac12 \partial_x \left(hu^2 \right) + \frac12 u \partial_x \left(hu \right) + \frac12 hu \partial_x u + gh\partial_x h
\end{bmatrix},
\quad
\vec{g} = \begin{bmatrix} gh -\frac12 u^2\\ u\end{bmatrix}.
\end{align}
A multi-block upwind DP  SBP method for the nonlinear shallow water equations in flux form   is given by
\begin{align}\label{eq:swe1D}
    \dv{}{t} \vb U +  \vb{F}(\vb U, \vb{f}(\vb{U}), \vec{D}_x) =  \frac{1}{2}\mathbf{H}_x^{-1}\left(\boldsymbol{\alpha} \otimes\left(\vec{B}_{Ix}+\vec{B}_{nx}\right)\right)\mathbf{g} +   \frac{1}{2}\Gamma\left(I_m \otimes\left(\vec{D}_{+}-\vec{D}_{-}\right)\right)\mathbf{g}, 
\end{align}
%%%
where  $\Gamma^k = \fn{\diag}{[\gamma_1^k, \gamma_2^k]}$ with, e.g., $\alpha_1^k = \gamma_1^k = \max_{1\le j \le n} h^k_j(|u| + \sqrt{gh^k_j})/(h^k_jg + (u^k_j)^2) \ge 0$, $\alpha_2^k=\gamma_2^k=  \max_{1 \le j \le n}| h_j^ku_j^k |\ge 0$ and 
{
   \begin{align} 
\vb{F}(\vb U, \vb{f}(\vb{U}), \vec{D}_x)=
\begin{bmatrix}
      \vec{D} ({\vec h \vec u}) \\
   \frac12 \vec{D} \left({\vec h \vec u}^2 \right) + \frac12 {\vec u} \vec{D} \left({\vec h \vec u} \right) + \frac12 {\left(\vec h \vec u\right)} \vec{D} {\vec u} + { g \vec h} \vec{D} {\vec h} 
\end{bmatrix},
\quad
\vec{g} = \begin{bmatrix}
 \vec u\\
g\vec h  -\frac12 \vec u ^2
\end{bmatrix}.
\end{align}
}
From the first identity in  \eqref{eq:B4_periodic_systems}, it is easy to check that
{\small 
%\scriptsize
\begin{align*}
{\inp{\vec g}{ \vb{F}(\vb U, \vb{f}(\vb{U}),  \vec{D}_x)}_{H_x}}=\underbrace{\inp{g\vb{h}}{\vec{D} \left(\vb{h}\vb{u} \right)}_{H}+ \inp{\vb{u}\vb{h}}{\vec{D} \left(g\vb{h} \right)}_{H}}_{=0}  + \frac12 \underbrace{\left(\inp{\vb{u}}{\vec{D} \left(\vb{h}\vb{u}^2 \right)}_{H} +  \inp{\vb{h}\vb{u}^2}{\vec{D} \vb{u}}_{H}\right)}_{=0}  =0.
\end{align*}
}
%%%
Then it follows that 
{%\small 
\begin{align*}
   \dv{}{t} E_h(t)={\inp{\vec g}{ \dv{}{t} \vec{U}}_{H_x}}   = - 
   \frac 1 2\sum_{k=1}^{K}\sum_{i=1}^2\frac{\alpha_i^{k} + \alpha_i^{k+1}}{2}\lJump{\mathbf{g}_i^k}\rJump^2
   -\frac 1 2  \sum_{k=1}^K\sum_{i=1}^2{\gamma_i^k\inp{ \mathbf{g}_i^k} {\left(D_{-}-D_{+} \right)\mathbf{g}_i^k}_{H}} 
      \le  0.
\end{align*}
}
Note that because of the periodic boundary conditions, at $k=K$ we have 
$$
\frac{\alpha_i^{K} + \alpha_i^{K+1}}{2}\lJump{\mathbf{g}_i^K}\rJump^2 =\frac{\alpha_i^{K} + \alpha_i^{1}}{2}({\mathbf{g}_{i1}^{1}-\mathbf{g}_{in}^K})^2.
$$

%\begin{example}
Next, we  will show that \eqref{eq:swe1D} semi-discretely conserves mass and momentum. 

To begin, we consider the time derivative of the total mass,
\begin{align*}
    \partial_t \inp{\vb{1}}{\vb{h}}_H  = -\inp{\vec{D}\vb{1}}{\vb{hu}}_H + \frac {\alpha_1}{2} \inp{ \left(\vec{B}_{I}+\vec{B}_{n}\right)\mathbf{1}}{\mathbf{u}} + \frac{\gamma_{1}}{2}\inp{(\vec{D}_+-\vec{D}_-) \vb 1}{\vb u}_{H} =0.
\end{align*}
%%%
% 
Similarly, we show that the semi-discrete approximation \eqref{eq:swe1D}  semi-discretely conserves momentum. We take the time derivative of the total momentum,
{\small
\begin{align*}
    \partial_t \inp{\vb{1}}{\vb{hu}}_H  &= -\inp{\vb{1}}{\frac12 \vec{D} \left(\vb{h}\vb{u}^2 \right) + \frac12 \vb{u} \vec{D} \left(\vb{hu} \right) + \frac12 \vb{hu} \vec{D} \vb{u} + g\vb{h}\vec{D} \vb{h}}_H 
     \\
     &
    -\frac{\alpha_2}{2}\inp{\left(\vec{B}_{I}+\vec{B}_{n}\right) \vb{1}}{\left(g\vb{h} - \frac12 \vb{u}^2\right)}_H  -\frac{\gamma_2}{2}\inp{(\vec{D}_- -\vec{D}_+) \vb{1}}{\left(g\vb{h} - \frac12 \vb{u}^2\right)}_H \\
   % &= \inp{\vb{1}}{\frac12 \vb{u} \vec{D} \left(\vb{hu} \right) + \frac12 \left(\vb{hu}\right) \vec{D} \vb{u} + g\vb{h}\vec{D} \vb{h}}_H \\
    &= \frac12 \inp{\vec{D}\vb{u}}{\vb{hu}}_H - \frac12 \inp{\vb{hu}}{\vec{D}\vb{u}}_H - \frac{g}{2}\inp{\vb{h}}{\vec{D}\vb{h}}_H + \frac{g}{2}\inp{\vec{D}\vb{h}}{\vb{h}}_H = 0.
\end{align*}
}
\section{Compressible Euler Equations}\label{sec:euler-equation-numerical-method}
Here, we consider the 1D compressible Euler equations for ideal gas law in conservative form
\begin{equation} \label{eq:euler-conservative-form}
\begin{split}
\partial_t\vec{U} + \partial_x\vec{f} =0, \quad x\in [0, L], \quad t \in [0, T],
\end{split}
\end{equation}
where the conserved variables and the PDE flux are given by
$$
\vec{U} = 
\begin{bmatrix}
    \rho \\
    \rho u\\
    E
\end{bmatrix}, \quad
\vec{f}(\vec{U}) =
\begin{bmatrix}
    \rho u\\
    \rho u^2 + p\\
    \left(E + p\right) u\\
\end{bmatrix}, \quad E=\frac{p}{\gamma-1} + \frac12 \varrho u^2.
$$
Here, \(\rho\) denotes the fluid density,  \(u\) is the fluid velocity, \(p\) denotes the
pressure, \(E\) is the specific total energy, and \(\gamma\) denotes the adiabatic index.
We also introduce the thermodynamic entropy
\(\eta\) defined by
\begin{equation}\label{eq:thermodynamic-entropy-euler}
\eta = - \rho \log\left(\frac p {\rho^\gamma}\right), \quad p = (\gamma -1) (E - \frac{1}{2}\rho u^2).
\end{equation}
For the compressible Euler equations, deriving a skew-symmetric form that conserves the thermodynamic entropy
\(\eta\) is challenging. In fact, to the best of
our knowledge, we are not aware of a skew-symmetric form  that conserves the thermodynamic entropy
\(\eta \) and the conserved variables~\(\vec U\), that can be targeted by high-order methods. Instead, we will use the conservative skew-symmetric form of
Nordstr\"om \cite{Nordstrom_2022_skewsym_comp}, which considers a  set of transformed variables,  denoted here by \(\vec
V\), and identify an appropriate mathematical entropy functional \(e(\vec V)\) with respect
to~\(\vec V\). However, only entropy functionals with respect to the conservative variables \(\vec U\) must satisfy an entropy inequality, otherwise numerical solutions will converge to the wrong weak solutions when shocks are present. As we will see below, our mathematical entropy is a linear combination of the total specific energy $E$ and the density $\rho$. Thus the mathematical entropy \(e(\vec V)\) is a conserved functional and the skew-symmetric form conserves the total mathematical entropy by using integration by parts only. Our numerical scheme will be designed to conserve the total mathematical entropy. However,  we  will construct both interface and volume upwind terms with respect to the thermodynamic entropy, so that we can ensure thermodynamic entropy consistency in the presence of shocks and discontinuities.
Discrete entropy consistency of the method has been verified through numerical experiments as  shown in Figure~\ref{fig:ce-sod-shock-conservation}.

The skew-symmetric form of the compressible Euler equations
from~\cite{Nordstrom_2022_skewsym_comp}
is given by
\begin{equation} \label{eq:ce-nordstrom}
	\partial_t \vec V + \vec F(\vec V, \vec{f}(\vec V), \partial_x) = 0,
\end{equation}
where the transformed variables and  divergence of flux are given by
\begin{equation}\label{eq:skew-symmetric-Euler}
	\vec V = \begin{bmatrix}
		v_1 \\
		v_2 \\
		v_3
	\end{bmatrix}
    :=
	\begin{bmatrix}
		\sqrt{\rho} \\
		\sqrt{\rho} u \\
		\sqrt{p}
	\end{bmatrix}, \quad
	\vec F(\vec V, \vec{f}(\vec V), \partial_x) =
	\begin{bmatrix}
		\frac{1}{2} \left(
			u \partial_x \sqrt{\rho}
			+ \partial_x (\sqrt{\rho}u)
		\right) \\
		\frac{1}{2} \left(
			\partial_x (\sqrt{\rho} u^2)
			+ u \partial_x (\sqrt{\rho} u)
		\right)
		+ 2\sqrt{\frac{p}{\rho}} \partial_x \sqrt{p} \\
		\frac{1}{2} \left[
			(\gamma - 1) \left(\partial_x (\sqrt{p} u) - u \partial_x \sqrt{p} \right)
			+ \partial_x (\sqrt{p} u) + u \partial_x \sqrt{p}
		\right]
	\end{bmatrix}.
\end{equation}
We define the Jacobian matrices of the forward and inverse transformations $\vec{U} \leftrightarrow \vec{V}$,
\begin{align}\label{eq:Jacobian}
	\partial_{\vec U} \vec{\mathbf{V}}
	=
	\begin{bmatrix}
		\frac{1}{2\sqrt{\rho}} & 0 & 0 \\
		-\frac{u}{2\sqrt{\rho}} & \frac{1}{\sqrt{\rho}} & 0 \\
		\frac{(\gamma - 1)u^2}{4\sqrt{p}} & -\frac{(\gamma-1)u}{2\sqrt{p}} &
			\frac{\gamma-1}{2\sqrt{p}}
	\end{bmatrix}, 
    \quad
    \partial_{\vec V} \vec{\mathbf{U}}
	=
	\begin{bmatrix}
		2\sqrt \rho& 0& 0 \\
		\sqrt \rho u &\sqrt \rho  & 0 \\
		0& \sqrt \rho u & 2\sqrt{p}/(\gamma - 1)
	\end{bmatrix}.
\end{align}
Note that $\partial_{\vec V} \vec{\mathbf{U}} = (\partial_{\vec U} \vec{\mathbf{V}})^{-1}$ and $\partial_{\vec U} \vec{\mathbf{V}}\partial_{\vec V} \vec{\mathbf{U}} = I$. Thus for all $p>0$ and $\rho >0$ the Jacobian matrices are invertible.

The associated entropy pair is
\[
    e({\vec V})
        =  \left(\sqrt{\rho}\right)^2
        + \frac{\left(\sqrt{\rho} u\right)^2}{2}
        + \frac{(\sqrt{p})^2}{\gamma - 1} \equiv \rho + E,\quad
    q
        = u(\rho + E + p).
\]
Note that
\begin{align}
e({\vec V}) = v_1^2 + \frac{1}{2}v_2^2 + \frac{1}{\gamma -1} v_3^2, \quad \norm{\vec V}^2_{(1-\gamma) }:= \int_{\Omega} e d\Omega.
\end{align}
Thus the total mathematical entropy $\norm{\vec V}^2_{(1-\gamma) }$ leads to a norm that is sufficiently strong to control the transformed variables \(\vec V\). The following Lemma will be important for the coming analysis
\begin{lemma}\label{lem:skew-symmetric-continuous-euler}
    Consider the skew-symmetric divergence of the flux $\vec F(\vec V, \vec{f}(\vec V), \partial_x)$ defined in \eqref{eq:skew-symmetric-Euler} and the Jacobian of the inverse transformation $\vec{U} \leftarrow \vec{V}$, $ \partial_{\vec V} \vec{\mathbf{U}}$  given in  \eqref{eq:Jacobian}. By using integration by parts we have
    $$
    \int_{\Omega}\partial_{\vec V} \vec{\mathbf{U}}\vec F(\vec V, \vec{f}(\vec V), \partial_x)\mathrm{dx} = \oint_{\partial \Omega} \vec{f}(\vec U) n_x \mathrm{dx}, \quad 
    \vec{f}(\vec{U}) =
\begin{bmatrix}
    \rho u\\
    \rho u^2 + p\\
    \left(E + p\right) u\\
\end{bmatrix}.
    $$
\end{lemma}
%where \(\alpha > 0\) is an arbitrary constant.
\begin{proof}
    We consider 
    {\small
    \begin{align*}
    \partial_{\vec V} \vec{\mathbf{U}}\vec F(\vec V, \vec{f}(\vec V), \partial_x)
    = 
\begin{bmatrix}
    {\sqrt \rho u}{
               \partial_x \sqrt{\rho}} +{\sqrt \rho}{\partial_x (\sqrt{\rho}u)}
    \\
    \frac{1}{2} \textcolor{Black}{\left(\sqrt{\rho} u^2
			\partial_x \sqrt{\rho}
            + \sqrt{\rho}
			\partial_x (\sqrt{\rho} u^2)\right)}
			+  \frac{1}{2} \textcolor{Black}{\left(\sqrt{\rho} u \partial_x (\sqrt{\rho}u) + \sqrt{\rho} u \partial_x (\sqrt{\rho} u)\right)}
			+ \sqrt{p} \partial_x \sqrt{p}
    \\
    \textcolor{Black}{ \sqrt{\rho} u
			\partial_x (\sqrt{\rho} u^2)
			+ \sqrt{\rho} u^2 \partial_x (\sqrt{\rho} u)
		}
		+ {\sqrt{p}u \partial_x \sqrt{p}}
	+ {\sqrt p}{\partial_x (\sqrt{p} u)
            + \frac{1}{\gamma-1}\left(\textcolor{Black}{\sqrt p \partial_x (\sqrt{p} u)}
            + \textcolor{Black}{\sqrt{p} u \partial_x \sqrt{p}}\right)
		}
    \\
\end{bmatrix}.
    \end{align*}
    }
    We integrate the expression above and apply integration by parts only, and we have
     {\small
    \begin{align*}
   \int_{\Omega}\partial_{\vec V} \vec{\mathbf{U}}\vec F(\vec V, \vec{f}(\vec V), \partial_x)\mathrm{dx} =
\int_{\Omega} {\partial_x
\begin{bmatrix}
    \rho u\\
    \rho u^2 + p\\
    \left(E + p\right) u\\
\end{bmatrix}
} \mathrm{dx} = \oint_{\partial \Omega} {
\begin{bmatrix}
    \rho u\\
    \rho u^2 + p\\
    \left(E + p\right) u\\
\end{bmatrix} n_x
} \mathrm{dx}.
    \end{align*}
    }
\end{proof}
Using the above Lemma the conservation of total mass, total momentum and total energy  follows from multiplying \cref{eq:ce-nordstrom} by \(\partial_{\vec V} \vec{\mathbf{U}}\) and integrating over the domain to obtain
\begin{align*}
\int_{\Omega}\partial_{\vec V} \vec{\mathbf{U}}\partial_t \vec V + \partial_{\vec V} \vec{\mathbf{U}}\vec F(\vec V, \vec{f}(\vec V), \partial_x) \mathrm{dx} =0 \iff  \int_{\Omega} \partial_t \vec U \mathrm{dx} + \oint_{\partial \Omega} \vec{f}(\vec U) n_x =0.
\end{align*}
Note that the conservation of the total mass and  the total energy implies that the total mathematical entropy is also conserved. That is
\begin{align*}
\partial_t\norm{\vec V}^2_{(1-\gamma) }:= \partial_t\int_{\Omega} e d\Omega  = - \oint_{\partial \Omega}(q n_x) \mathrm{dx}, \quad q =  u(\rho + E + p).
\end{align*}
For periodic boundary conditions the boundary terms vanish and we have 
$
\partial_t\norm{\vec V}^2_{(1-\gamma) } = 0.
$
We will formulate the result as the theorem
\begin{theorem}\label{theo:conservation_of_entropy}
Consider the the 1D compressible Euler equations in the skew-symmetric form \cref{eq:ce-nordstrom} on the spatial domain $\Omega= [0, L]$ with periodic boundary conditions and smooth initial data. For continuous solutions $\vb{V}: \Omega\times [0, T] \to \mathbb{R}^{3}$, at time $t \in [0, T]$ let the total mathematical entropy  be denoted by 
\begin{align}
 \norm{\vec V}^2_{(1-\gamma) }:= \int_{\Omega} e d\Omega, \quad e({\vec V}) = v_1^2 + \frac{1}{2}v_2^2 + \frac{1}{\gamma -1} v_3^2, \quad \gamma > 1.
\end{align}
The total mathematical entropy is conserved, that is we have
$$
\partial_t\norm{\vec V}^2_{(1-\gamma) } = 0 \iff \norm{\vec V(\cdot, t)}^2_{(1-\gamma) } = \norm{\vec V(\cdot, 0)}^2_{(1-\gamma) }, \quad \forall \, t \in [0, T].
$$
\end{theorem}
It is significantly noteworthy that the transformed variables $\vec V$ are our prognostic variables and  the conserved variables $\vec U$ are the diagnostic variables.  Thus, for continuous solutions, \cref{theo:conservation_of_entropy} will provide a strong bound for  the transformed variables $\vec V$. It is desirable that our numerical method emulate \cref{theo:conservation_of_entropy}.

Next, we turn our attention to the semi-discrete approximation of the skew-symmetric form \eqref{eq:ce-nordstrom}. We particularly target to conserve the weighted $l_2$-norm of the transformed variables and potentially dissipate the total thermodynamic entropy at shocks and discontinuities. Note that since the proofs at the continuous level use only integration by parts, we can replicate these at the discrete level by using SBP operators. 

As before we consider the two-element discrete DP SBP approximation
{\small
\begin{align}\label{eq:skew_symm_upwind_SBP_SAT-multi-block-euler}
    \dv{}{t} \mathbf{V} +\vb{F}( \vec V, {\vec f}({\vec V}), \vec{D}_x) = \frac{1}{2}\partial_{\vec U}{\vec{\mathbf V}}\mathbf{H}_x^{-1}\left(\boldsymbol{\alpha} \otimes\left(\vec{B}_{Ix}+\vec{B}_{nx}\right)\right)\mathbf{g} + \frac{1}{2}\partial_{\vec U}{\vec{\mathbf V}}\Gamma\left(I_m \otimes\left(\vec{D}_{+}-\vec{D}_{-}\right)\right)\mathbf{g}, 
\end{align}
}
%%%
where the  matrices $\boldsymbol{\alpha}$, $\boldsymbol{\Gamma}$ are given by
{\small
$$
\boldsymbol{\alpha} = \frac{1}{2}\diag\left([\alpha_{1}^1+\alpha_{1}^2, \alpha_{2}^1+\alpha_{2}^2,\alpha_{3}^1+\alpha_{3}^2]\right), \quad
\Gamma = \diag\left([\Gamma^1,\Gamma^2]\right)\otimes I_n, \quad \Gamma^k = \fn{\diag}{[\gamma_1^k, \gamma_2^k, \gamma_3^k]},
$$
}
and the upwind parameters are given by
\begin{equation*}
	\gamma_1 = \max_x \frac{\lambda_\mathrm{max}}{\partial_{\rho}^2 \eta},
	\quad
	\gamma_2 = \max_x \frac{\lambda_\mathrm{max}}{\partial_{\rho u}^2 \eta},
	\quad
	\gamma_3 = \max_x \frac{2M^2 \lambda_\mathrm{max}}{(1 + M^2)\partial_{E}^2 \eta},
\end{equation*}
with the maximum wave speed is \(\lambda_\mathrm{max} = |u| + \sqrt{\gamma p
\rho^{-1}}\), \(M = u / \sqrt{\gamma p \rho^{-1}}\) is the signed Mach number.
We have particularly chosen the upwind entropy variables
$
\vec{g} = \partial_{\vec U} \mathbf \eta
$
where $\mathbf \eta$ is the thermodynamic entropy and $\vec{U}$ is the conserved variable.

We will now establish the conservative properties of the numerical method \cref{eq:skew_symm_upwind_SBP_SAT-multi-block-euler}, which will consequently lead to numerical stability.
The following Lemma is a discrete \cref{lem:skew-symmetric-continuous-euler}
\begin{lemma}\label{lem:skew-symmetric-discrete-euler}
    Consider the semi-discrete approximation of the skew-symmetric divergence of the flux $\vec F(\vec V, {\vec f}({\vec V}), \vec{D}_x)$ defined in \eqref{eq:skew-symmetric-Euler} and the Jacobian of the inverse transformation $\vec{U} \leftarrow \vec{V}$, $ \partial_{\vec V} \vec{\mathbf{U}}$  given in  \eqref{eq:Jacobian}. If the discrete operators $\vec{D}_{+},\vec{D}_{-}$ satisfy  the DP SBP property and the periodic boundary conditions implemented weakly we have
    $$
    \inp{\mathbf{I}_3}{\partial_{\vec V} \vec{\mathbf{U}} \vb{F}( \vec V, {\vec f}({\vec V}), \vec{D}_x)}_{\mathbf{H}_x} = 
    \begin{bmatrix}
    {0}\\
    {0}\\
    {0}\\
\end{bmatrix}.
    $$
\end{lemma}
\begin{proof}
    We consider 
    {\scriptsize
    \begin{align*}
    &\inp{\mathbf{I}_3}{\partial_{\vec V} \vec{\mathbf{U}} \vb{F}( \vec V, {\vec f}({\vec V}), \vec{D}_x)}_{\mathbf{H}_x}=\\
    & 
\begin{bmatrix}
    \inp{\sqrt \rho u}{\vec{D} \sqrt{\rho}}_{H} + \inp{\vec{D} (\sqrt{\rho}u)}{\sqrt \rho}_{H}
    \\
    \frac{1}{2}\left(\inp{\sqrt{\rho} u^2}{\vec{D} \sqrt{\rho}}_{H} 
    + 
    \inp{\vec{D} (\sqrt{\rho} u^2)}{\sqrt \rho}_{H}\right)
    %%%
   %  
 +
  \frac{1}{2}\left(\inp{\sqrt{\rho} u}{\vec{D} \sqrt{\rho} u}_{H} 
    + 
    \inp{\vec{D} (\sqrt{\rho} u)}{\sqrt \rho u}_{H}\right)
    +
     \frac{1}{2}\left(\inp{\sqrt{p}}{\vec{D} \sqrt{p}}_{H} 
    + 
    \inp{\vec{D} (\sqrt{p} )}{\sqrt p}_{H}\right)
    \\
  \frac{1}{2}\left(\inp{\sqrt{\rho} u}{\vec{D} \sqrt{\rho} u^2}_{H} 
    + 
    \inp{\vec{D} (\sqrt{\rho} u)}{\sqrt \rho u^2}_{H}\right)
    %%%
  %   
        +
        \frac{1}{2}\left(\inp{\sqrt{p} u}{\vec{D} \sqrt{p}}_{H} 
    + 
    \inp{\vec{D} (\sqrt{p} u)}{\sqrt p}_{H}\right)
    +
        \frac{1}{\gamma-1}\left(\inp{\sqrt{p} u}{\vec{D} \sqrt{p}}_{H} 
    + 
    \inp{\vec{D} (\sqrt{p} u)}{\sqrt p}_{H}\right)
\end{bmatrix}
= \begin{bmatrix}
    {0}\\
    {0}\\
    {0}
\end{bmatrix}.
    \end{align*}
    }
\end{proof}
\begin{theorem}\label{theo:conservation-discrete-euler}
Consider the semi-discrete DP SBP approximation  \cref{eq:skew_symm_upwind_SBP_SAT-multi-block-euler}. If the discrete derivative operators satisfy the DP SBP framework, then the semi-discretisation is \cref{eq:skew_symm_upwind_SBP_SAT-multi-block-euler} is conservative. That is,
	\(\dv{}{t} \inp{\mathbf{1}}{\vec{U}_i}_{\mathbf{H}_x} =0 \) for all $i = 1, 2, 3$ and $t \ge 0$.
\end{theorem}
\begin{proof}
	Multiplying \cref{eq:skew_symm_upwind_SBP_SAT-multi-block-euler} by  \(\mathbf{I}_3^T \mathbf{H}_x \partial_{\vec V} \vec{\mathbf{U}}\) and use the fact \(\partial_{\vec V} \vec{\mathbf{U}}\dv{}{t} \vec{V} = \dv{}{t} \vec{U}\), we obtain
    {\small
	\begin{align*} 
		 \dv{}{t} \inp{\mathbf{I}_3}{\vec{\mathbf{U}}}_{\mathbf{H}_x}
		+ \inp{\mathbf{I}_3}{\partial_{\vec V} \vec{\mathbf{U}} \vb{F}( \vec V, {\vec f}({\vec V}), \vec{D}_x)}_{\mathbf{H}_x} 
        &= \inp{\mathbf{I}_3}{\left(\boldsymbol{\alpha} \otimes\left(\vec{B}_{Ix}+\vec{B}_{nx}\right)\right)\mathbf{g}}
		+ \inp{\mathbf{I}_3}{\Gamma\left(I_m \otimes\left(\vec{D}_{+}-\vec{D}_{-}\right)\right)\mathbf{g}}_{\mathbf{H}_x}.
	\end{align*}
    }
    By using  \cref{lem:skew-symmetric-discrete-euler} the volume term vanishes $\inp{\mathbf{I}_3}{\partial_{\vec V} \vec{\mathbf{U}} \vb{F}( \vec V, {\vec f}({\vec V}), \vec{D}_x)}_{\mathbf{H}_x} =\vb{0}$. Note that $\alpha$ and $\Gamma$ are diagonal and locally constant within the element. The periodic boundary conditions are implemented weakly.  We use the symmetric properties  of the interface and volume upwind operators, and we have
    \begin{align*} 
        \dv{}{t} \begin{bmatrix}
    \inp{\mathbf{1}}{\vec{U}_1}_{\mathbf{H}}\\
    \inp{\mathbf{1}}{\vec{U}_2}_{\mathbf{H}}\\
    \inp{\mathbf{1}}{\vec{U}_3}_{\mathbf{H}}
\end{bmatrix}= 
		 \dv{}{t} \inp{\mathbf{I}_3}{\vec{\mathbf{U}}}_{\mathbf{H}_x}=
         \begin{bmatrix}
     \inp{\left(\vec{B}_{I}+\vec{B}_{n}\right)\mathbf{1}}{\alpha_1\mathbf{g}_1}\\
     \inp{\left(\vec{B}_{I}+\vec{B}_{n}\right)\mathbf{1}}{\alpha_2\mathbf{g}_2}\\
    \inp{\left(\vec{B}_{I}+\vec{B}_{n}\right)\mathbf{1}}{ \alpha_3\mathbf{g}_3} 
\end{bmatrix}
+
  \begin{bmatrix}
     \inp{\left(\vec{D}_{+}-\vec{D}_{-}\right)\mathbf{1}}{\gamma_1\mathbf{g}_1}\\
     \inp{\left(\vec{D}_{+}-\vec{D}_{-}\right)\mathbf{1}}{\gamma_2\mathbf{g}_2}\\
     \inp{\left(\vec{D}_{+}-\vec{D}_{-}\right)\mathbf{1}}{\gamma_3\mathbf{g}_3} 
\end{bmatrix}
	 = 
\begin{bmatrix}
    {0}\\
    {0}\\
    {0}
\end{bmatrix}.
	\end{align*}
    Thus we must have
    \( \dv{}{t} \inp{\mathbf{1}}{\vec{U}_i}_{\mathbf{H}} =0 \) for all $i = 1, 2, 3$ and $t \ge 0$.
\end{proof}
Note that
$$
\partial_t\norm{\vec V}^2_{(1-\gamma) }:=\dv{}{t} \inp{\mathbf{1}}{\vec{U}_1}_{\mathbf{H}} + \dv{}{t} \inp{\mathbf{1}}{\vec{U}_3}_{\mathbf{H}}  =0.
$$
Thus the following corollary is a consequence of the conservative property of the numerical method established by \cref{theo:conservation-discrete-euler}.
\begin{corollary}\label{coro:stability}
Consider the semi-discrete DP SBP approximation  \cref{eq:skew_symm_upwind_SBP_SAT-multi-block-euler}. If the discrete derivative operators satisfy the DP SBP framework, then the semi-discretisation is \cref{eq:skew_symm_upwind_SBP_SAT-multi-block-euler} is conservative and the total mathematical entropy is conserved. That is,
$$
\partial_t\norm{\vec V}^2_{(1-\gamma) \mathbf{H}_x} = 0 \iff \norm{\vec V(\cdot, t)}^2_{(1-\gamma) \mathbf{H}_x} = \norm{\vec V(\cdot, 0)}^2_{(1-\gamma) \mathbf{H}_x}, \quad \forall \, t \in [0, T].
$$
\end{corollary}
The above corollary establishes $l_2$-stability for the semi-discrete approximation \cref{eq:skew_symm_upwind_SBP_SAT-multi-block-euler}. The next theorem proves that the method will potentially dissipate thermodynamic entropy at shocks and discontinuities.
\begin{theorem}\label{theo:entropy-consistent-discrete-euler}
Consider the semi-discrete DP SBP approximation  \cref{eq:skew_symm_upwind_SBP_SAT-multi-block-euler}. If the discrete derivative operators satisfy the DP SBP framework, then the semi-discretisation is \cref{eq:skew_symm_upwind_SBP_SAT-multi-block-euler} satisfies the inequality,
{\footnotesize
    \begin{align*} 
		 \dv{}{t} \inp{\mathbf{1}}{\vec{\mathbf{\eta}}}_{\mathbf{H}_x}
		+ \inp{\mathbf{\partial_{\vec V} \vec{\mathbf{\eta}}}}{ \vb{F}( \vec V, {\vec f}({\vec V}), \vec{D}_x)}_{\mathbf{H}_x} 
        &= - 
   \frac 1 2\sum_{k=1}^{K}\sum_{i=1}^3\frac{\alpha_i^{k} + \alpha_i^{k+1}}{2}\lJump{\mathbf{g}_i^k}\rJump^2
   -\frac 1 2  \sum_{k=1}^K\sum_{i=1}^3{\gamma_i^k\inp{ \mathbf{g}_i^k} {\left(D_{-}-D_{+} \right)\mathbf{g}_i^k}_{H}}  \le 0,
	\end{align*}
    }
    where $\mathbf{\eta}$ is the thermodynamic entropy.
\end{theorem}
\begin{proof}
	Multiplying \cref{eq:skew_symm_upwind_SBP_SAT-multi-block-euler} by  \(\mathbf{1}^T \mathbf{H}_x \partial_{\vec V} \vec{\mathbf{\eta}}\). We use the facts \(\partial_{\vec V} \vec{\mathbf{\eta}}\dv{}{t} \vec{V} = \dv{}{t} \vec{\eta}\) and \(\vec{g}= \partial_{\vec U} \vec{\mathbf{\eta}} =\partial_{\vec V} \vec{\mathbf{\eta}} \partial_{\vec U} \vec{\mathbf{V}}\), we obtain
    {\small
	\begin{align*} 
		 \dv{}{t} \inp{\mathbf{1}}{\vec{\mathbf{\eta}}}_{\mathbf{H}_x}
		+ \inp{\mathbf{\partial_{\vec V} \vec{\mathbf{\eta}}}}{ \vb{F}( \vec V, {\vec f}({\vec V}), \vec{D}_x)}_{\mathbf{H}_x} 
        &= \inp{\mathbf{g}}{\left(\boldsymbol{\alpha} \otimes\left(\vec{B}_{Ix}+\vec{B}_{nx}\right)\right)\mathbf{g}}
		+ \inp{\mathbf{g}}{\Gamma\left(I_m \otimes\left(\vec{D}_{+}-\vec{D}_{-}\right)\right)\mathbf{g}}_{\mathbf{H}_x} \le 0.
	\end{align*}
    }
    and
    {\footnotesize
    \begin{align*} 
		 \dv{}{t} \inp{\mathbf{1}}{\vec{\mathbf{\eta}}}_{\mathbf{H}_x}
		+ \inp{\mathbf{\partial_{\vec V} \vec{\mathbf{\eta}}}}{ \vb{F}( \vec V, {\vec f}({\vec V}), \vec{D}_x)}_{\mathbf{H}_x} 
        &= - 
   \frac 1 2\sum_{k=1}^{K}\sum_{i=1}^3\frac{\alpha_i^{k} + \alpha_i^{k+1}}{2}\lJump{\mathbf{g}_i^k}\rJump^2
   -\frac 1 2  \sum_{k=1}^K\sum_{i=1}^3{\gamma_i^k\inp{ \mathbf{g}_i^k} {\left(D_{-}-D_{+} \right)\mathbf{g}_i^k}_{H}}  \le 0,
	\end{align*}
    }
    \end{proof}

\section{Numerical Experiments: Shallow Water Equations}\label{swe-num-experiments}
Here, we present numerical simulations for the nonlinear SWEs. We will  consider the SWEs in 1D and 2D, and investigate  robustness, efficacy and numerical well-balanced properties.

\subsection{1D Dambreak}
To investigate the robustness of the schemes further, we consider the 1D dam break problem, with the initial conditions
$$
h_0 = 
    \begin{cases}
        1.2 & \text{if } |x| > 15 \\
        0.2 & \text{if } |x| < 15
    \end{cases}, \quad
    u_0 = 0,\quad
    x \in [-30, 30],\quad
    t \in [0, 10],
$$
where the gravitational acceleration is set to \(g=1\). 
For the DG scheme, we vary the number of elements $64\le K\le 256$, the polynomial degree $3\le p\le 6$ and we increase the free parameter in the upwind operators to \(\lambda_n = -0.2\).
Additionally, we consider the DP FD  operators with interior order of accuracy $5,6,7,8,9$, discretized with $32$ elements, and vary the number of nodes within each element $17\le N\le 65$. A fixed timestep of \(\Delta t = 0.001 \Delta x\) is used, and the simulation is run until the final time. Snapshots of the water height and momentum are shown in \cref{fig:swe-dambreak-snapshot} at $t=4$. 
In \cref{fig:swe-dambreak-conservation}, we plot the relative changes in total energy/entropy, total mass, and total momentum. It is noteworthy that our DP SBP FD/DG schemes (with $\gamma >0 $) are stable and complete the simulation without crashing until the final time. In contrast, all other schemes---including linearly stable schemes (with $\gamma >0 $) and the standard SBP/DGSEM (with $\gamma =0 $)---fail before reaching the final simulation time. These results further demonstrate the robustness of our entropy-stable DP SBP FD/DG schemes (with $\gamma >0 $). 
Furthermore, our DP SBP FD/DG schemes (with $\gamma >0 $) are conservative and entropy consistent, as illustrated in the first column of \cref{fig:swe-dambreak-conservation}.
\begin{figure}[htbp]
    \centering
    \includegraphics[scale=\figurescaling]{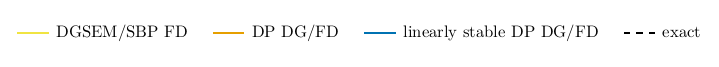} \\[-0.5em]
    \begin{subcaptionblock}{0.49\textwidth}
        \centering
        \includegraphics[scale=\figurescaling]{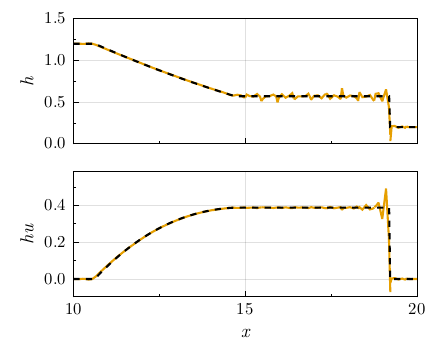}
        \vspace{-0.25cm}
        \caption{DP DG with 6th degree polynomials on 128 elements}
    \end{subcaptionblock}
    \hfill
    \begin{subcaptionblock}{0.49\textwidth}
        \centering
        \includegraphics[scale=\figurescaling]{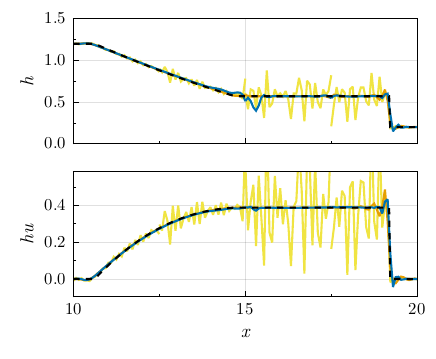}
        \vspace{-0.25cm}
        \caption{6th order DP FD on 24 elements with 33 nodes each}
    \end{subcaptionblock}
    %\vspace{-0.5cm}
    \caption{Snapshot of the solution to the dambreak problem at \(t=4.0\).}
    \label{fig:swe-dambreak-snapshot}
\end{figure}
\begin{figure}[htbp]
    \centering
    \includegraphics[scale=\figurescaling]{images/swe-1D/dambreak/N_7_deriv_order_6_deriv_type_GlaubitzEtal2024_1_nblocks_128/legend-entr-horizontal.pdf}\\[-1em]
    \begin{subcaptionblock}{\textwidth}
        \centering
        \includegraphics[scale=\figurescaling]{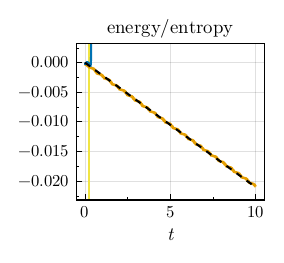}%
        \includegraphics[scale=\figurescaling]{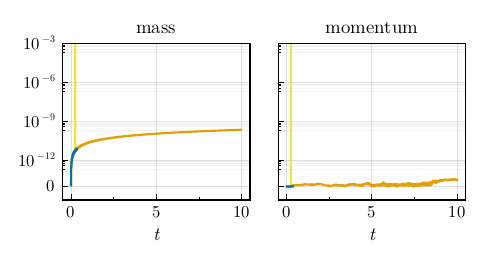}
        \\[-1.0em]
        \caption{DP DG with 6th degree polynomials on 128 elements.}
    \end{subcaptionblock}
    \begin{subcaptionblock}{\textwidth}
        \centering
        \includegraphics[scale=\figurescaling]{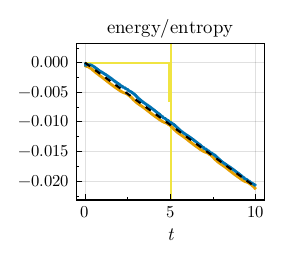}%
        \includegraphics[scale=\figurescaling]{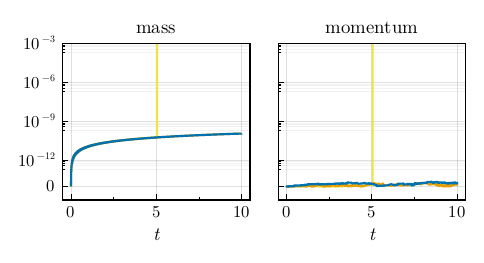}
        \\[-1.0em]
        \caption{6th order DP FD on 24 elements with 3 nodes per element.}
    \end{subcaptionblock}
     \vspace{-0.5cm}
    \caption{Relative change in the total entropy/energy, total mass and total momentum for the dam beak problem.}
    \label{fig:swe-dambreak-conservation}
\end{figure}
We have also performed numerical simulations with different discretisation configurations. The crash or final  times are reported in \cref{tab:swe-dambreak-crash-times-fd,tab:swe-dambreak-crash-times-dg}. It is significantly noteworthy that our DP FD and DP DG schemes (with $\gamma >0 $) run until the final time, $t=10$, for all configurations without crashing.
\begin{table}[htbp]
    \centering
    \scalebox{\tablescaling}{\begin{tabular}{@{}r@{}*{3}{c@{}*{4}{r}@{}}}
    \toprule
    & \hspace{2em}
    & \multicolumn{4}{c}{linearly stable DP DG} &\hspace{2.5em}
    & \multicolumn{4}{c}{DGSEM} &\hspace{2.5em}
    & \multicolumn{4}{c}{DP DG} \\
    \cmidrule{3-6}
    \cmidrule{8-11}
    \cmidrule{13-16}
    &
    & \multicolumn{4}{c}{polynomial degree} &
    & \multicolumn{4}{c}{polynomial degree} &
    & \multicolumn{4}{c}{polynomial degree} \\
    K &
    & 3 & 4 & 5 & 6 &
    & 3 & 4 & 5 & 6 &
    & 3 & 4 & 5 & 6 \\
    \midrule
    64 &
    % LxF
	& \crash{6.70} & \crash{0.64} & \crash{0.67} & \crash{0.72} &
    % Ns
	& \crash{0.53} & \crash{0.68} & \crash{0.91} & \crash{0.49} &
    % UNs
    & \nocrash{10} & \nocrash{10} & \nocrash{10} & \nocrash{10} \\
    128 &
    % LxF
	& \crash{3.35} & \crash{0.32} & \crash{0.33} & \crash{0.36} &
    % Ns
	& \crash{0.27} & \crash{0.34} & \crash{0.46} & \crash{0.25} &
    % UNs
    & \nocrash{10} & \nocrash{10} & \nocrash{10} & \nocrash{10} \\
    256 &
    % LxF
	& \crash{1.68} & \crash{0.16} & \crash{0.17} & \crash{0.18} &
    % Ns
	& \crash{0.13} & \crash{0.17} & \crash{0.23} & \crash{0.12} &
    % UNs
    & \nocrash{10} & \nocrash{10} & \nocrash{10} & \nocrash{10} \\
    \bottomrule
\end{tabular}}
    % \vspace{-0.5cm}
    \caption{Crash or final simulation times for the DG approximations of the  dam break problem.}
    \label{tab:swe-dambreak-crash-times-dg}
\end{table}
%%%
\begin{table}[htbp]
    \centering
    \scalebox{\tablescaling}{\begin{tabular}{@{}rr*{3}{@{}c@{}*{5}{r}}@{}}
	\toprule
	& & \hspace{2.5em}
	& \multicolumn{5}{c}{linearly stable DP FD} & \hspace{2.5em}
	& \multicolumn{5}{c}{SBP FD} & \hspace{2.5em}
	& \multicolumn{5}{c}{DP FD} \\
	\cmidrule{4-8}
	\cmidrule{10-14}
	\cmidrule{16-20}
	& &
	& \multicolumn{5}{c}{accuracy order} &
	& \multicolumn{5}{c}{accuracy order} &
	& \multicolumn{5}{c}{accuracy order} \\
	K & n &
	& 5 & 6 & 7 & 8 & 9 &
	& 5 & 6 & 7 & 8 & 9 &
	& 5 & 6 & 7 & 8 & 9 \\
	\midrule
	32 & 17 &
	& % LxF
	\nocrash{10} & \nocrash{10} & \nocrash{10} & \crash{1.46} & \crash{1.47} &
	& % SkS
	\crash{7.38} & \crash{6.50} & \crash{0.63} & \crash{0.57} & \crash{0.59} &
	& % USkS
	\nocrash{10} & \nocrash{10} & \nocrash{10} & \nocrash{10} & \nocrash{10} \\
	32 & 33 &
	& % LxF
	\nocrash{10} & \nocrash{10} & \nocrash{10} & \crash{1.18} & \crash{1.00} &
	& % SkS
	\crash{4.30} & \crash{3.79} & \crash{0.61} & \crash{0.28} & \crash{0.26} &
	& % USkS
	\nocrash{10} & \nocrash{10} & \nocrash{10} & \nocrash{10} & \nocrash{10} \\
	32 & 65 &
	& % LxF
	\nocrash{10} & \nocrash{10} & \nocrash{10} & \crash{0.59} & \crash{0.50} &
	& % SkS
	\crash{4.31} & \crash{2.99} & \crash{0.28} & \crash{0.14} & \crash{0.13} &
	& % USkS
	\nocrash{10} & \nocrash{10} & \nocrash{10} & \nocrash{10} & \nocrash{10} \\
	\bottomrule
\end{tabular}}
    \caption{Crash or final simulation times for the FD approximations of the  dam break problem.}
    \label{tab:swe-dambreak-crash-times-fd}
\end{table}

\subsection{2D Well-balanced}
Next, we consider the 2D well-balanced test \cite{Delestre_2012}. A desirable property for a numerical method for conservation laws is to preserve steady state solutions. This property is known as well-balanced properties for the SWE.
We consider the 2D lake at rest problem with gravitational acceleration \(g=9.81\), and the initial and  exact solutions given by
$$
    h=h_0 = \frac{1}{2} - b(x,y),\quad
    u=u_0 = 0,\quad
    v=v_0 = 0,\quad
    (x,y) \in [0,25]^2,\quad 
    t \in [0, 1],
$$
%%%
where the bottom topography is
$$
    b(x,y) = \begin{cases}
        0.2 - 0.05\left((x-10)^2 + (y-10)^2\right), & \text{if } (x-10)^2 + (y-10)^2 < 4, \\
        0, & \text{otherwise}.
    \end{cases}
$$
%%%
Note that we can prove that our semi-discrete scheme is perfectly well-balanced for steady-state solutions with zero velocity, but we omit this for brevity. Thus, we expect our method to achieve machine-precision rounding error for the lake at rest problem.  We use a fixed timestep of \(\Delta t = 0.01\Delta x\) and run the simulation until the final time.
The maximum error over the domain at the final time \(t=1\) can be found in \cref{tab:swe-2d-well-balanced} for the  FD and  DG methods respectively. Indeed, the numerical errors in \cref{tab:swe-2d-well-balanced} are within the machine-precision rounding errors, which verify the well-balanced property of the schemes.

\begin{table}[htbp]
    \centering
    \begin{subcaptionblock}{0.49\textwidth}
        \centering
        \scalebox{\tablescaling}{
            \begin{tabular}{@{}rr*{3}{r}@{}}
	\toprule
	K & n & lin. stable DP DG & DGSEM & DP DG \\
	\midrule
	  16 &    7 & \num{7.93e-13} & \num{7.32e-13} & \num{1.51e-12} \\
	  32 &    7 & \num{1.50e-12} & \num{1.47e-12} & \num{2.30e-12} \\
	  64 &    7 & \num{2.97e-12} & \num{2.95e-12} & \num{3.70e-12} \\
	\bottomrule
\end{tabular}

        }
        \caption{DG with degree 6 polynomials}
    \end{subcaptionblock}
    \begin{subcaptionblock}{0.49\textwidth}
        \centering
        \scalebox{\tablescaling}{
            \begin{tabular}{@{}rr*{3}{r}@{}}
	\toprule
	K & n & lin. stable DP FD & SBP FD & DP FD \\
	\midrule
	   4 &   17 & \num{2.50e-13} & \num{2.53e-13} & \num{2.50e-13} \\
	   4 &   33 & \num{4.95e-13} & \num{4.97e-13} & \num{4.95e-13} \\
	   4 &   65 & \num{9.92e-13} & \num{9.94e-13} & \num{9.92e-13} \\
	\bottomrule
\end{tabular}

        }
        \caption{6th order FD}
    \end{subcaptionblock}
    %\vspace{-0.5cm}
    \caption{Maximum error at the final time \(t=1\) of the FD and DG methods for the lake at rest problem.}
    \label{tab:swe-2d-well-balanced}
\end{table}

\subsection{Merging Vortices}
Next we consider the merging vortices test case~\cite{mcrae2014energy,hew2024stronglystabledualpairingsummation} for the 2D rotating nonlinear shallow water equations. The initial conditions are a pair of Gaussian vortices with the incompressible stream function $\psi = \psi_+ + \psi_-$ defined by
\begin{equation}
    \psi_\pm(x,y) = \fn{\exp}{-2.5\left[\left(x - \frac{3.05 \pm 0.45}{3}\pi\right)^2 + (y-\pi)^2\right]}.
\end{equation}
To ensure linear geostrophic balance $f (v - u) + g\nabla h = 0$, we set the initial conditions
\[
    h_0 = H + \frac f g \psi(x,y), \quad 
    u_0 = -\partial_y \psi(x,y), \quad
    v_0 = \partial_x \psi(x,y), \quad
    (x,y) \in [0,2\pi]^2, \quad
    t \in [0, 20],
\]
where $f = g = 5$ and $H = 8$.

We consider DG schemes using 6th degree polynomials on \(32^2\) elements and 6th interior order accurate FD schemes on \(6^2\) elements with \(33^2\) nodes each. The solutions are evolved using a fixed timestep of \(\Delta t = 10^{-2}\Delta x\) until the final time

Snapshots of the potential vorticity \(h^{-1}\omega\), where \(\omega = \partial_x v - \partial_y u + f\) is the absolute vorticity, can be found in \cref{fig:swe-2d-merging-vortex-snapshot}.
Shown in \cref{fig:swe-2d-merging-vortex-entropies} is the relative change with time of the total energy/entropy and the total enstrophy \(h^{-1}\omega^2\), which is also conserved for smooth solutions.

Although the DGSEM/SBP FD methods without volume upwinding are nonlinearly stable, they are unable to control the total enstrophy, which bounds gradients and higher moments of the solution. As a result, we observe  growth of the total enstrophy along with the growth of spurious oscillations that destroy the structure of the solution. Note that at time $2$, the vortices are well resolved and the total enstrophy is unchanged, but at a later time \(t=16\), the solution has been consumed by numerical noise and the total enstrophy has more than doubled. On the other hand, the DP DG/FD methods with volume upwinding are able to control the enstrophy and limit the unphysical growth of high frequency oscillations. In addition, note that, because the solution is well resolved, the dissipated energy is less than \(10^{-5}\) of the total energy and the dissipated enstrophy is \(10^{-2}\) of the total enstrophy.

\begin{figure}[htbp]
    \centering
    \includegraphics[scale=\figurescaling]{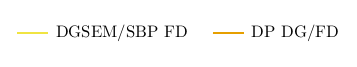}\\[-0.5em]
    \begin{subcaptionblock}{\textwidth}
        \centering
        \includegraphics[scale=\figurescaling]{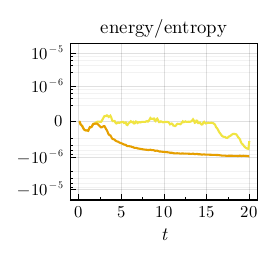}%
        \includegraphics[scale=\figurescaling]{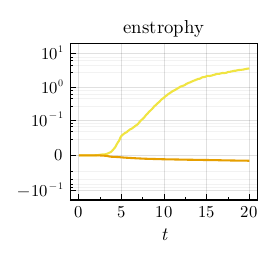}
        \caption{DG using 6th degree polynomials on \(32^2\) elements.}
    \end{subcaptionblock}
    \begin{subcaptionblock}{\textwidth}
        \centering
        \includegraphics[scale=\figurescaling]{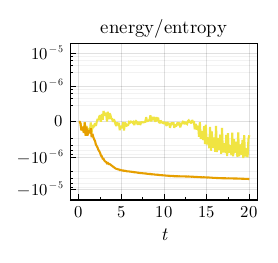}%
        \includegraphics[scale=\figurescaling]{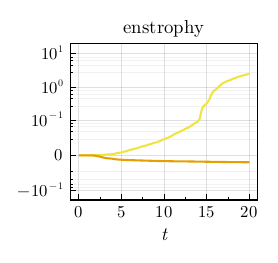}
        \caption{6th order FD on \(6^2\) elements with \(33^2\) nodes each.}
    \end{subcaptionblock}
    \caption{The relative change with time of the total energy/entropy and the total enstrophy for the merging vortices testcase.}
    \label{fig:swe-2d-merging-vortex-entropies}
\end{figure}

\begin{figure}[htbp]
    \centering
    \begin{subcaptionblock}{0.43\textwidth}
        \centering
        \includegraphics[scale=\figurescaling, trim={0cm, 0cm, 2.4cm, 0cm}, clip]{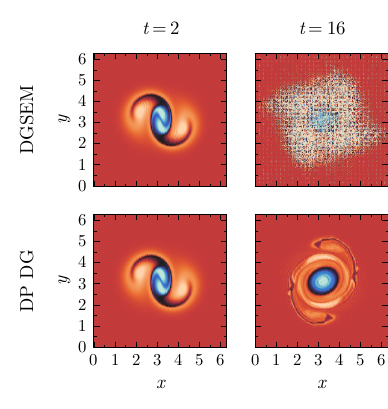}
        \caption{DG using 6th degree polynomials on \(32^2\) elements.}
    \end{subcaptionblock}\hfill%
    \begin{subcaptionblock}{0.55\textwidth}
        \centering
        \includegraphics[scale=\figurescaling]{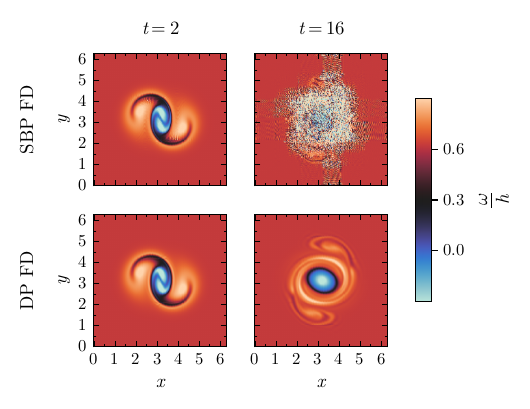}
        \caption{6th order FD on \(6^2\) elements with \(33^2\) nodes per element.}
    \end{subcaptionblock}
    \caption{Snapshots of the potential vorticity for the skew-symmetric entropy conservative DGSEM/SBP FD methods and our novel DP DG/FD methods for the merging vortex testcase.}
    \label{fig:swe-2d-merging-vortex-snapshot}
\end{figure}

\subsection{Barotropic Shear Instability}
Our final numerical example for the nonlinear shallow water equations is the  the barotropic shear instability \cite{hew2024stronglystabledualpairingsummation,galewsky2004initial,peixoto2019semi}. This is also known as the Kelvin-Helmholtz instability, and involves triggering a barotropic shear within zonal jets by initializing the flow with a thin fluid discontinuity, supplemented with small Gaussian perturbations. 

The computational domain is $\Omega = [0, L]^2$, with $L = 2\pi \times \SI{6371.22}{\km}$ with doubly periodic boundary conditions. The initial conditions are
\[
    u_0 = \bar{u}_0 \left(\fn{\mathrm{sech}}{\frac{y - y_+}{10^6}}
        - \fn{\mathrm{sech}}{\frac{y - y_-}{10^6}}\right), \quad
    v_0 =0, \quad
    {h}_0 = H -\frac f g \int_0^y u(x,s) \dd{s} + \tilde{h}(x,y),
\]
with
\[
    \tilde{h}(x,y) = \bar{h}_0 \sum_{i=1}^2 \exp(-k d_i(x,y)), \quad
    d_i(x, y)=\frac{\left(x-x_i\right)^2}{L^2}+\frac{\left(y-y_i\right)^2}{L^2}
    \quad \forall i=\{1,2\}, 
\]
where
$\bar{h}_0=0.01 H$,
$\left(x_1, y_1\right)=\left(0.15 L, y_+\right)$,
$\left(x_2, y_2\right)=\left(0.85 L, y_-\right)$,
$y_+ = 0.25L$,
$y_- = 0.75L$,
$\bar{u}_0 = \SI{50}{\m\per\s}$,
$f = \num{7.292e-5}$,
$g = \SI{9.80616}{\m\per\square\s}$,
$H = \SI{10}{\km}$,
$k = 10^3$.

We consider DG schemes using 6th degree polynomials on \(64^2\) elements and 6th interior order accurate FD schemes on \(6^2\) elements with \(65^2\) nodes each. The solutions are evolved using a fixed timestep of \(\Delta t = 2\times10^{-3}\Delta x\) until the final time \(t=80\,\mathrm{days}\).
Snapshots of the absolute vorticity can be found in \cref{fig:swe-2d-kh-snapshot}.
In \cref{fig:swe-2d-kh-entropies}, we plot the relative change with time of the total energy/entropy and the total enstrophy.

Similar to the merging vortices testcase, the entropy/energy conservative DGSEM/SBP FD methods without volume upwinding exhibit uncontrolled growth of the total enstrophy and generate numerical noise that destroys the solution. Meanwhile, the entropy stable DP FD/DG methods with volume upwinding are able to suppress the unphysical enstrophy growth and resolve the structure of the vortices without generating spurious high frequency oscillations.

\begin{figure}[htbp]
    \centering
    \includegraphics[scale=\figurescaling]{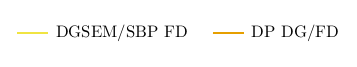}\\[-0.5em]
    \begin{subcaptionblock}{\textwidth}
        \centering
        \includegraphics[scale=\figurescaling]{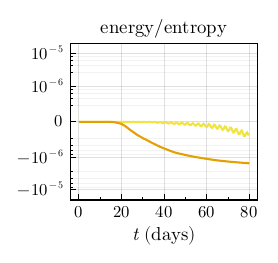}%
        \includegraphics[scale=\figurescaling]{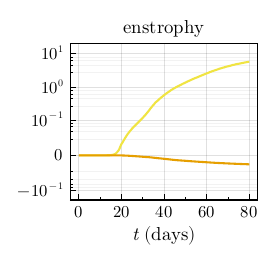}
        \caption{DG using 6th degree polynomials on \(64^2\) elements.}
    \end{subcaptionblock}
    \begin{subcaptionblock}{\textwidth}
        \centering
        \includegraphics[scale=\figurescaling]{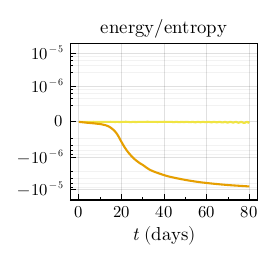}%
        \includegraphics[scale=\figurescaling]{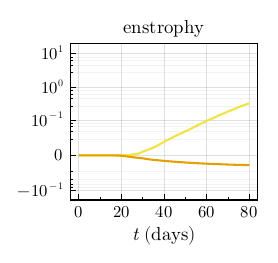}
        \caption{6th order FD on \(6^2\) elements with \(65^2\) nodes each.}
    \end{subcaptionblock}
    \caption{The relative change with time of the total energy/entropy and the total enstrophy for the barotropic shear instability.}
    \label{fig:swe-2d-kh-entropies}
\end{figure}

\begin{figure}[htbp]
    \centering
    \begin{subcaptionblock}{\textwidth}
        \centering
        \includegraphics[scale=\figurescaling]{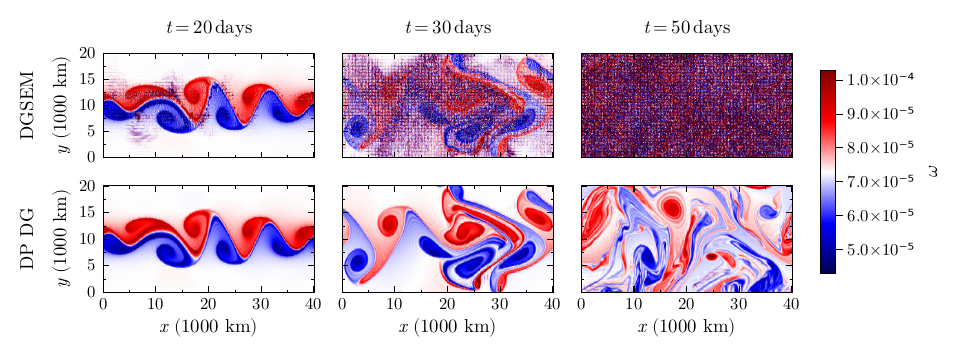}
        \caption{DG using 6th degree polynomials on \(64^2\) elements.}
    \end{subcaptionblock}
    \begin{subcaptionblock}{\textwidth}
        \centering
        \includegraphics[scale=\figurescaling]{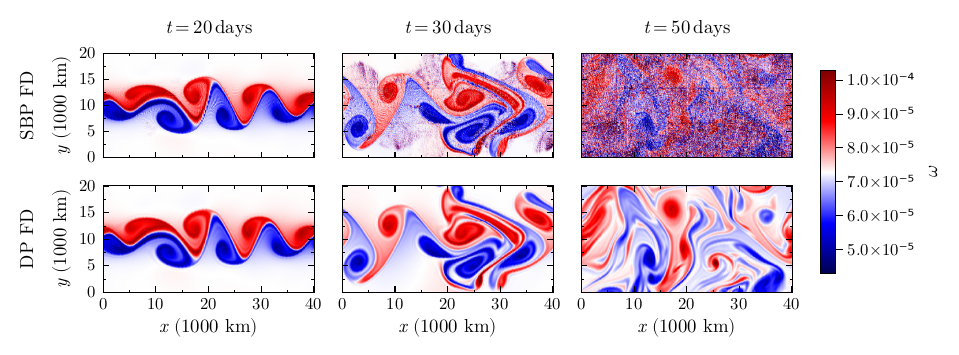}
        \caption{6th order FD on 6 elements with \(65^2\) nodes each.}
    \end{subcaptionblock}
    \caption{Snapshots of the vorticity on the bottom half of the domain for the skew-symmetric entropy conservative DGSEM/SBP FD methods and our novel DP DG/FD methods for the barotropic shear instability.}
    \label{fig:swe-2d-kh-snapshot}
\end{figure}

%

%%%
%%%
\bibliographystyle{unsrt} % We choose the "plain" reference style
\bibliography{references}
%\printbibliography

\end{document}